\numberwithin{equation}{section}
\newtheorem{Theorem}{Theorem}
\newtheorem{Lemma}{Lemma}[section]
\newtheorem{Rem}{Remark}[section]
\newtheorem{Def}{Definition}
\newtheorem{Prop}{Proposition}[section]
\newtheorem{Cor}{Corollary}[section]
\title{A spectral dominance approach to large random matrices}
\author{Charles Bertucci$^1$, M\'{e}rouane Debbah$^2$, Jean-Michel Lasry$^3$, Pierre-Louis Lions$^{3,4}$}
\thanks{$^1$ : CMAP, Ecole Polytechnique, UMR 7641, 91120 Palaiseau, France\\
$^2$ : Lagrange Mathematics and Computing Research Center, 75007, Paris\\
$^3$ :Universit\'e Paris-Dauphine, PSL Research University,UMR 7534, CEREMADE, 75016 Paris, France\\
$^4$ : Coll\`ege de France, 3 rue d'Ulm, 75005, Paris, France
}
\date{} 
\begin{document}

\maketitle
\begin{abstract}
This paper presents a novel approach to characterize the dynamics of the limit spectrum of large random matrices. This approach is based upon the notion we call "spectral dominance". In particular, we show that the limit spectral measure can be determined as the derivative of the unique viscosity solution of a partial integro-differential equation. This also allows to make general and "short" proofs for the convergence problem. We treat the cases of Dyson Brownian motions, Wishart processes and present a general class of models for which this characterization holds.
\end{abstract}

\tableofcontents
\section{Introduction}
This paper is the first of a series devoted to the systematic study of mathematical models describing the limiting dynamics of the spectrum of large random matrices. The present paper is concerned with the one dimensional case, i.e. the case of matrices having a real spectrum. We present a novel approach to characterize the limit spectrum as the spatial derivative of the unique viscosity solution of a certain partial differential equation (PDE in short). This approach is based upon a notion we call spectral dominance. Both the two dimensional case and extensions involving control or interactions between systems shall be presented in a future work.\\

The spectrum of several matrices, whose coefficients are given as solutions of stochastic differential equations (SDE in short), evolves as a system of interacting particles which tend to repel each other. Maybe the most famous example is given by the so-called Dyson Brownian motion, introduced in \citep{dyson} and which describes the evolution of the spectrum of a matrix whose coefficients are all independent real Brownian motions, except for the fact that the matrix is required to be symmetric.

Upon a proper rescaling, the evolution of the spectrum of such matrices becomes deterministic in the limit $N \to \infty$, where $N$ is the size of the matrix. Formally the limit spectral measure solves a non-local PDE of the form
\begin{equation*}{(*)}\label{eq0}
\partial_t µ + \partial_x(µH[µ]) = 0 \text{ in } (0,\infty)\times \mathbb{R},
\end{equation*}
for some non-local and singular operator $H$. For instance in the case of the Dyson Brownian motion, the operator $H$ is simply the Hilbert transform. As it is well known in the literature (see \citep{chan,rogers}), it is a quite general fact that the sequence of spectral measures is pre-compact for a certain topology and that all its accumulating points are weak solutions (in a sense which we do not define here) of ($*$). In the existing literature, uniqueness of solutions of ($*$) is established on a case by case basis using the specific nature of the operator $H$ \citep{chan,rogers,cabanal,li2020law}. In the present work, we propose a new (and more general) approach to characterize the limit spectrum $µ$ by considering its spatial primitive $F(t,x) := µ(t)((-\infty,x])$, which counts the number of eigenvalues below the level $x$ at time $t$. If $µ(t)$ is interpreted as a probability measure, then $F(t)$ is simply the associated cumulative distribution function. Formally if $µ$ solves ($*$), then $F$ solves

\begin{equation*}(**)
\partial_t F + (\partial_x F) \tilde{H}[F] = 0  \text{ in } (0,\infty)\times \mathbb{R},
\end{equation*}
where $\tilde{H}$ is defined by $\tilde{H}[F] := H[\partial_x F]$. It turns out that the PDE ($**$) satisfies a comparison principle result for a class of non-linearity $H$ that includes a wide range of models arising from the random matrix theory. This comparison principle allows us to use the theory of viscosity solutions which yields strong uniqueness and stability results on the solution and which thus permits us to characterize the unique solution of ($**$). Moreover, we are able to prove that the unique solution we characterize is the limit of the discrete model originating from the random matrix theory, when the size of the matrices tends to infinity.\\

At this point we emphasize the fact that the comparison property mentioned above has a discrete analogue for $N < \infty$ which is based upon the notion we call spectral dominance. We say that a symmetric $N\times N$ matrix $A$ is spectrally dominated by another symmetric $N\times N$ matrix $B$ if $\lambda_i(A) \leq \lambda_i(B)$ for all $1\leq i \leq N$, and we write $A \preceq B$, where we denote by $\lambda_1(C) \leq \lambda_2(C) ... \leq \lambda_N(C)$ the ordered eigenvalues of the symmetric matrix $C$. At the discrete level, the spectral dominance has already been used to prove the well-posedness of the interacting particles system \citep{anderson2010introduction,sniady2002random}.

Obviously, $A \preceq B$ if $A \leq B$ (namely $\lambda_i(A-B) \leq 0, \forall 1 \leq i \leq N$). Let us observe that $A \preceq B$ if and only if for all $x \in \mathbb{R}, \sharp\{i | \lambda_i(A) \leq x \} \leq \sharp\{i| \lambda_i(B) \leq x\}.$\\

In the first part of this paper we focus on the case of the Dyson Brownian motion to present in details the main ideas of this approach in this simplest setting. We then present how the results established can be generalized to the case of matrices of correlations, which can be refer to as the Wishart case in reference to the work \citep{wishart}. Finally we extend our results in an abstract framework in the last section of the paper. The rest of the introduction is devoted to bibliographical references.\\

Obviously, there exists a huge literature on the evolution of the spectrum of large random matrices and we are not going to present it exhaustively. As already mentioned above, the fact that the spectrum of matrices whose coefficients are driven by SDE can evolve as the empirical measure associated to a system of interacting particles is known since the work \citep{dyson}. Several other models have been studied, we can quote for instance the case of Wishart processes which is interested in correlation-like matrices \citep{bru1991wishart,allez2012invariant}. The convergence of the spectral measure in the limit of larger and larger matrices has been studied in \citep{chan,rogers}. The equation ($*$) has also been studied in several contexts in \citep{cabanal}. For (static) correlation matrices, a Wigner type law has been observed in \citep{marcenko}. More recently, a gradient flow like approach has been used to study equation ($*$), in the Dyson setting, in \citep{li2020law,donati2018convergence}. More generally we refer to \citep{anderson2010introduction} for a quite complete introduction to random matrices. Equation ($*$) has also been studied for several non-local terms $H$ in the context of Coulomb or Riesz gases in Physics, we refer to \citep{bolley2018dynamics} and references therein for more details on this models, which often take place in dimension $2$.\\

On the other hand, viscosity solutions have been introduced in \citep{crandall1983viscosity} for first order PDE. It has been generalized to second order fully non-linear PDE in \citep{jensen1988maximum,crandall1992user}. The case of integro-differential PDE has been the subject of more recent works such as \citep{awatif,barles2008second,arisawa2008remark}.\\

Let us end this bibliographical part by mentioning the link which exists between the theory of free probabilities and the theory of random matrices. It is well known that random matrices provide in some sense a canonical example of a space of free random variables on a certain free probability space. Thus, models such as the Dyson Brownian motion are closely related to free stochastic processes. For instance, the Fokker-Planck equation satisfied by the measure of a free Brownian motion is of the form of ($*$). We refer to \citep{biane1998stochastic} for more details on free stochastic calculus and to \citep{voiculescu1992free} for an introduction to the free probabilities theory. Although, for the most part, we shall not enter in this analogy, let us mention that the comparison principle that we mentioned earlier can be interpreted as a stochastic domination principle for free stochastic processes.

\section{The Dyson case}
\subsection{The model}
As mentioned in the introduction, we are only concerned in this section with the Dyson model that we now describe. Let $(\Omega, \mathcal{A}, \mathcal{F}, \mathbb{P})$ a standard filtered probability space. Let $N \geq 1$ be an integer and $x_0 <...< x_N$ be distinct real numbers. We call Dyson Brownian motion the family of stochastic processes $(\lambda^i)_{1\leq i \leq N}$ solutions of
\begin{equation}\label{dyson}
d\lambda^i_t = \frac{1}{N}\sum_{j \ne i} \frac{1}{\lambda^i_t - \lambda^j_t} dt + \frac{2}{\sqrt{N}}dB^i_t, \text{ for } 1\leq i \leq N,
\end{equation}
\begin{equation}\label{initial}
\lambda^i_0 = x_0,
\end{equation}
where $(B^i)_{1\leq N}$ is a family of independent Brownian motions on $(\Omega, \mathcal{A}, \mathcal{F}, \mathbb{P})$. Equation (\ref{dyson}) is satisfied (in law) by the eigenvalues of the matrix valued process $(A_t)_{t \geq 0}$ defined by 
\begin{equation}\label{processA}
d(A_{ij})_t = \frac{1}{\sqrt{N}} dW^{ij}_t \text{ for } 1 \leq i,j\leq N,
\end{equation}
where $(W^{ij})_{1\leq i,j\leq N}$ is a family of Brownian motions on $(\Omega, \mathcal{A}, \mathcal{F}, \mathbb{P})$ such that $W^{ij} = W^{ji}$ and $(W^{ij})_{1\leq i \leq j \leq N}$ is an independent family.

The mean field counterpart of (\ref{dyson}), namely the equation satisfied by the limit, as $N$ goes to $+ \infty$, of the empirical measure $\frac{1}{N}\sum_{i=1}^N \delta_{\lambda^i_t}$, is the following
\begin{equation}\label{ffp}
\partial_t µ + \partial_x(µ H[µ]) = 0 \text{ in } (0,\infty)\times\mathbb{R},
\end{equation}
where $H$, defined by
\begin{equation}\label{hilbert}
H[µ] (x) = \int_{\mathbb{R}}\frac{1}{x-y}µ(dy) \text{ on } \mathbb{R},
\end{equation}
is the Hilbert transform of $µ$ and the previous integral is understood in the sense of principal value. We expect that if $µ^N$, defined by 
\begin{equation}
µ^N(t) = \frac{1}{N}\sum_{i=1}^N\delta_{\lambda^i_t},
\end{equation}
converges toward some measure $µ$ when $N\to \infty$, then $µ$ is a solution of (\ref{ffp}). In this section we are mostly interested with the primitive equation of (\ref{ffp}) which is the following non-local transport equation :
\begin{equation}\label{teq}
\partial_t F + (\partial_x F) \tilde{H}[F] = 0 \text{ in } (0,\infty)\times \mathbb{R},
\end{equation}
where $\tilde{H}$ is the operator that corresponds to the squared root of $(-\frac{d^2}{dx^2})$, which is defined by 
\begin{equation}\label{halflapl}
\tilde{H}[\phi](x) = \int_{\mathbb{R}}\frac{\phi(x) - \phi(y)}{(x-y)^2} dy \text{ on } \mathbb{R}.
\end{equation}
Let us observe that the operator $\tilde{H}$ is well defined on functions $\phi$ which are bounded, with a bounded second order derivative (i.e. $\phi \in \mathcal{C}^{1,1}_b$). Indeed let us compute at some point $x\in \mathbb{R}$
\begin{equation}
\int_{|x -y| \leq 1}\frac{\phi(x) - \phi(y)}{(x-y)^2} dy = \int_{|x-y| \leq 1}\frac{\phi(x) - \phi(y) + \phi'(x)(x-y)}{(x-y)^2} dy,
\end{equation}
where we have used a symmetry argument. Since $\phi''$ is bounded by some constant $C$, we deduce from a Taylor-Lagrange formula that
\begin{equation}
\int_{|x -y| \leq 1}\frac{\phi(x) - \phi(y)}{(x-y)^2} dy \leq C.
\end{equation}
The rest of the integral defining $\tilde{H}[\phi](x)$ is also bounded since $\phi$ is bounded and $\mathbb{1}_{\{ |x| \geq 1\}} x^{-2}$ is integrable.
 Let us remark that for smooth function $\phi$ with compact support, the following holds
\begin{equation}
\tilde{H}[\phi] = H[\partial_x \phi],
\end{equation}
and both terms are well defined. We say that equation (\ref{teq}) is the primitive of (\ref{ffp}) because if $F$ is a smooth solution of (\ref{teq}) such that $\tilde{H}[F]$ is well defined, then $\partial_x F$ is a smooth solution of (\ref{ffp}).

An important feature of the operator $\tilde{H}$ is the following "maximum principle" property : if $\phi \in \mathcal{C}^{1,1}_b$ attains a maximum at $x_0$, then 
\begin{equation}\label{elliptic}
\tilde{H}[\phi](x_0) \geq 0.
\end{equation}

\subsection{The comparison principle and spectral dominance}
A quite interesting feature of the Dyson Brownian motion is that a certain comparison principle holds for solutions of (\ref{dyson}), and analogously at the continuous level for solutions of (\ref{teq}). To our knowledge, the first time such a comparison principle has been established was in a discrete setting \citep{sniady2002random} in a slightly different context (for singular values instead of eigenvalues). This idea has been used in \citep{anderson2010introduction} to prove the well-posedness of the system \eqref{dyson}.We provide the proof of the discrete comparison principle for the sake of completeness. Even though the generalization of a discrete comparison principle to the limit equation is natural and might be known from researchers in the field, we are not aware of such results.

\begin{Prop}\label{discretecomp}
Let $(B^i)_{1 \leq i \leq N}$ be a collection of Brownian motions and $(\lambda^i)_{1\leq i \leq N}$ and $(µ^i)_{1 \leq i \leq N}$ be two families of processes satisfying (\ref{dyson}) (in the strong sense). Assume that for all $i$, $\lambda^i_0 \leq µ^i_0$. Then for all time $t \geq 0$, for all $i$, $\lambda^i_t \leq µ^i_t$.
\end{Prop}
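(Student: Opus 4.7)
The plan is to subtract the two SDEs of type (\ref{dyson}). Because $(\lambda^i)$ and $(\mu^i)$ share the same driving Brownian motions $(B^i)$, the martingale contributions cancel exactly, and the differences $d^i_t := \mu^i_t - \lambda^i_t$ are, pathwise, absolutely continuous functions of time solving the linear non-autonomous ODE
\begin{equation*}
\dot d^i_t \;=\; \frac{1}{N}\sum_{j\neq i}\frac{d^j_t - d^i_t}{(\mu^i_t - \mu^j_t)(\lambda^i_t - \lambda^j_t)}.
\end{equation*}
This identity is obtained by putting $\tfrac{1}{\mu^i - \mu^j} - \tfrac{1}{\lambda^i - \lambda^j}$ on a common denominator. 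By the standard no-collision property of the Dyson Brownian motion, both trajectories stay almost surely in the open Weyl chamber $\{x_1 < \cdots < x_N\}$, so $(\mu^i - \mu^j)$ and $(\lambda^i - \lambda^j)$ share the sign of $i-j$, and the coefficients $a_{ij}(t) := [N(\mu^i_t - \mu^j_t)(\lambda^i_t - \lambda^j_t)]^{-1}$ are strictly positive. In other words, $d$ satisfies a cooperative linear system $\dot d = A(t) d$ with $A(t)$ a Metzler matrix, and the claim is precisely the forward invariance of the nonnegative orthant for such systems.

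To turn this into a rigorous argument, I would proceed by contradiction after a small perturbation: set $\mu^{i,\epsilon}_0 := \mu^i_0 + \epsilon$, so that $d^{i,\epsilon}_0 > 0$ strictly for every $i$. Assuming that $\tau := \inf\{t > 0 : \min_i d^{i,\epsilon}_t = 0\}$ is finite, I pick any index $i^*$ at which the minimum is realized at time $\tau$. Continuity gives $d^{j,\epsilon}_\tau \geq 0$ for every $j$, and the ODE yields
\begin{equation*}
\dot d^{i^*,\epsilon}_\tau \;=\; \sum_{j\neq i^*} a_{i^*j}(\tau)\, d^{j,\epsilon}_\tau \;\geq\; 0,
\end{equation*}
which, provided the inequality is strict, contradicts $d^{i^*,\epsilon}$ arriving at $0$ from strictly positive values. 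Letting $\epsilon \downarrow 0$ then produces the non-strict inequality stated in the proposition.

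The main obstacle is the borderline case where this derivative vanishes at $\tau$: since each coefficient $a_{i^*j}(\tau)$ is strictly positive, the only way for the sum to be zero is $d^{j,\epsilon}_\tau = 0$ for every $j$, i.e.\ the two configurations coincide entirely at $\tau$. I would rule this out by appealing to backward uniqueness for the pathwise ODE governing the Dyson particles: on $[0,\tau]$ the trajectory stays at positive distance from the collision set, so the drift is smooth (in particular Lipschitz) on a neighborhood of the trajectory, and the Cauchy--Lipschitz theorem applied in reverse time yields $\lambda_t = \mu^{\epsilon}_t$ on the whole of $[0,\tau]$, contradicting the initial strict gap. An alternative is to perturb by distinct values $\epsilon_1 < \cdots < \epsilon_N$ from the outset, which makes a simultaneous collision of all coordinates of $d^{\epsilon}$ at a single time impossible and removes the borderline case altogether.
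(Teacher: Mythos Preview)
Your argument is essentially the paper's: subtract the two SDEs, let the shared noise cancel, perturb the initial data by $\epsilon$ to make the ordering strict, and argue by first touching time. The only real difference is the handling of the borderline case $\dot d^{\,i^*,\epsilon}_\tau = 0$: the paper adds a linear time penalty, working with $w^i_t = \lambda^i_t - \mu^i_t - \delta t$ (with $\delta = \epsilon/(2T)$), which forces the derivative at the touching time to be $\leq -\delta < 0$ and removes the borderline case outright. Your backward-uniqueness argument is also valid --- the coefficient matrix $A(t)$ is continuous and bounded on $[0,\tau]$ by the no-collision property and compactness, so Gronwall runs in both time directions and $d^\epsilon_\tau = 0$ forces $d^\epsilon_0 = 0$ --- though it is slightly heavier than the penalty trick. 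One caveat: your alternative of perturbing by distinct $\epsilon_1 < \cdots < \epsilon_N$ does not by itself preclude all $d^{j,\epsilon}$ from vanishing simultaneously at some $\tau > 0$; distinct initial values of $d^\epsilon$ say nothing about later times, so you would still need backward uniqueness (or the penalty) to close that case.
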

\begin{proof}
Let us recall that from \citep{chan,rogers}, almost surely, the trajectories $(\lambda^i)_{1\leq i \leq N}$ and $(µ^i)_{1 \leq i \leq N}$ are well defined, that is they are continuous and there is no collisions. Let us assume first that there exists $\epsilon > 0$ such that for all $i$,
\begin{equation}
\lambda^i_0 + \epsilon \leq µ^i_0.
\end{equation}
Let us define the random stopping time 
\begin{equation}
\tau_{\epsilon} := \inf \{ t, \exists i , \lambda^i_t  - µ^i_t> \epsilon\},
\end{equation}
for some deterministic $\epsilon > 0$. We now focus on the event $\{\tau_{\epsilon} < T\}$, for some deterministic and arbitrary $T > 0$. Let us define 
 \begin{equation}
 w^i_t = \lambda^i_t- µ^i_t - \delta t,
 \end{equation}
where $\delta := \epsilon / (2T)$. Remark that $w$ satisfies for all $t \geq 0$, $1\leq i\leq N$
\begin{equation}
\begin{aligned}
d w^i_t &= \left( \sum_{j\ne i} \frac{1}{\lambda^i_t - \lambda^j_t} - \frac{1}{µ^i_t - µ^j_t} \right)dt - \delta,\\
& = \sum_{j\ne i}\frac{µ^i_t- \lambda^i_t - (µ^j_t - \lambda^j_t)}{(\lambda^i_t - \lambda^j_t)(µ^i_t - µ^j_t)}dt - \delta.
\end{aligned}
\end{equation}
Hence it follows that the $w^i$ are smooths. Since $w^i_0 \leq 0$ for all $i$ and, conditioned on $\{\tau_{\epsilon} < T\}$, there exists $j$ such that $w^j_{\tau_{\epsilon}} > 0$, there exists $\tau$, random time such that 
\begin{equation}
\begin{cases}
\forall i,t \in [0,\tau] w^i_t \leq 0,\\
\exists i_0, w^{i_0}_{\tau} = 0,\\
dw^{i_0}_{\tau} \geq 0.
\end{cases}
\end{equation}
Computing $dw^i_t$ at $t = \tau$, $i = i_0$ (still conditioned on $\{\tau_{\epsilon} < T\}$), we deduce that $dw^{i_0}_{\tau} < -\delta$, which is a contradiction. Thus $\mathbb{P}(\{\tau_{\epsilon} < T\}) = 0$. We obtain that, almost surely, for all $i$ and $t\geq 0$
\begin{equation}
\lambda^i_t \leq µ^i_t + \epsilon.
\end{equation}
Since $\epsilon >0$ is arbitrary, the result easily follows.
\end{proof}
\begin{Rem}
The nature of the family of processes $(B^i)_{1 \leq i \leq N}$ is irrelevant since we are comparing two systems which are driven by the same $(B^i)_{1 \leq i \leq N}$.
\end{Rem}
The discrete comparison principle stated above can be reformulated in terms of the notations we previously introduced. Let $A$ and $B$ be two symmetric $d\times d$ matrices such that $A \preceq B$, and $(W_t)_{t \geq 0}$ a matrix valued process such that all its entries are independent Brownian motions, except for the fact that $W_t$ a symmetric matrix for all $t\geq 0$. Then for all $t\geq 0$, $A + W_t \preceq B + W_t$ in law, moreover, if $A$ and $B$ commutes, then the stochastic domination holds almost surely.\\

In the continuous setting, the comparison principle can be stated in the following form.
\begin{Prop}
Let $F_1$ and $F_2$ be two smooth bounded non decreasing solutions of (\ref{teq}) such that $F_1(0) \leq F_2(0)$, then for all time $t \geq 0$, $F_1(t) \leq F_2(t)$.
\end{Prop}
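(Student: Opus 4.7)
The plan is to mimic the proof of Proposition \ref{discretecomp} at the PDE level, using the nonlocal maximum principle (\ref{elliptic}) as a replacement for the pointwise ODE argument that yields the contradiction in the discrete case. Setting $v := F_1 - F_2$, I want to show $v \le 0$ on $[0, \infty) \times \mathbb{R}$. I argue by contradiction: suppose that for some $T > 0$ the quantity $\sup_{[0,T]\times \mathbb{R}} v$ is strictly positive.

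The first step is a linear-in-time penalization. Fix $\delta > 0$ small and set $w(t,x) := v(t,x) - \delta t$; for $\delta$ small enough, $\sup_{[0,T]\times\mathbb{R}} w$ is still strictly positive, while $w(0,\cdot) \le 0$ by hypothesis. Because $v$ is bounded, the supremum of $w$ is attained (possibly after adding an auxiliary spatial penalization $-\epsilon\psi(x)$ with $\psi\in \mathcal{C}^{1,1}_b$ growing at infinity, e.g.\ $\psi(x) = \sqrt{1+x^2}$, and sending $\epsilon \to 0^+$ at the end). Call $(t^*, x^*)$ such a maximum point; necessarily $t^* > 0$.

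The second step is to collect the pointwise information at $(t^*, x^*)$:
\begin{equation*}
\partial_t v(t^*, x^*) \;\ge\; \delta, \qquad \partial_x v(t^*, x^*) \;=\; 0, \qquad \tilde{H}[v(t^*,\cdot)](x^*) \;\ge\; 0,
\end{equation*}
the last inequality being exactly property (\ref{elliptic}) applied to $v(t^*,\cdot) \in \mathcal{C}^{1,1}_b$, which attains its spatial maximum at $x^*$. Subtracting the two equations (\ref{teq}) satisfied by $F_1$ and $F_2$ and rearranging the bilinear term gives
\begin{equation*}
\partial_t v + (\partial_x F_1)\, \tilde{H}[v] + (\partial_x v)\, \tilde{H}[F_2] \;=\; 0.
\end{equation*}
Evaluated at $(t^*, x^*)$, the last term vanishes, and since $F_1$ is non-decreasing we have $\partial_x F_1(t^*, x^*) \ge 0$, so
\begin{equation*}
\partial_t v(t^*, x^*) \;=\; -(\partial_x F_1)(t^*, x^*)\, \tilde{H}[v(t^*,\cdot)](x^*) \;\le\; 0,
\end{equation*}
which contradicts $\partial_t v(t^*, x^*) \ge \delta > 0$. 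Sending $\delta \to 0^+$ (and $\epsilon \to 0^+$ if the spatial penalization was used) concludes the proof.

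The main obstacle is the lack of spatial compactness: one must ensure that the strict maximum is actually attained at a finite point, and, when one enforces this via a spatial penalization $-\epsilon \psi$, one must control the extra contributions $\epsilon\, \tilde{H}[\psi](x^*)$ and $\epsilon\,\psi'(x^*)\,\tilde{H}[F_2](t^*,x^*)$ appearing in the analogues of the identities above. Choosing $\psi$ with bounded second derivative keeps $\tilde{H}[\psi]$ bounded by the estimate already established after (\ref{halflapl}), so these corrections vanish as $\epsilon \to 0^+$ and do not spoil the contradiction with $\delta > 0$. The same penalization scheme will later be the template for extending the argument to merely continuous (viscosity) solutions of (\ref{teq}), which is where the more technical nonlocal comparison machinery becomes essential.
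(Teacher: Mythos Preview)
Your argument is essentially the same as the paper's sketch: look at a first contact point of $F_1-F_2$ (after a $\delta t$ penalization), use $\partial_x v=0$ together with the ellipticity property (\ref{elliptic}) of $\tilde H$, and exploit $\partial_x F_1\ge 0$ to derive a contradiction. The paper simply assumes the maximum is attained (``to simplify''), whereas you add a spatial penalization; so your write-up is, if anything, more complete.

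One technical slip: your claim that ``choosing $\psi$ with bounded second derivative keeps $\tilde H[\psi]$ bounded by the estimate already established after (\ref{halflapl})'' is not correct for $\psi(x)=\sqrt{1+x^2}$. That estimate requires $\phi$ to be \emph{bounded} as well as $\mathcal C^{1,1}$, and for $\psi(x)=\sqrt{1+x^2}$ the tail contribution $\int_{|z|\ge 1}\frac{\psi(x)-\psi(x+z)}{z^2}\,dz$ diverges (the integrand behaves like $-|z|^{-1}$). This is easy to repair: take instead a slower-growing penalization such as $\psi(x)=\log(2+x^2)$, for which $\psi''$ is bounded and the tail $\frac{\psi(x)-\psi(x+z)}{z^2}=O(|z|^{-2}\log|z|)$ is integrable, so the error terms $\epsilon\,\tilde H[\psi](x^*)$ and $\epsilon\,\psi'(x^*)\,\tilde H[F_2](t^*,x^*)$ are indeed bounded and vanish as $\epsilon\to 0$.
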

Since we are going to prove later more general results, we only sketch its proof.
\begin{proof}
We argue by contradiction and assume to simplify that there exists $(t_0,x_0)$ such that
\begin{equation}
\begin{cases}
\partial_t F_1(t_0,x_0) > \partial_t F_2(t_0,x_0),\\
F_1(t_0,x) \leq F_2(t_0,x), \forall x \in \mathbb{R},\\
F_1(t_0,x_0) = F_2(t_0,x_0).
\end{cases}
\end{equation}
Let us observe that the strict inequality in the first line can be obtained by adding $\delta t$ to $F_2$ in similar manner as what is done for the discrete comparison principle.
In particular, $\partial_x F_1(t_0,x_0) = \partial_xF_2(t_0,x_0)$ holds and, using the ellipticity of $\tilde{H}$, we obtain that
\begin{equation}
\tilde{H}[F_1(t_0)](x_0) \geq \tilde{H}[F_2(t_0)](x_0).
\end{equation}
Hence using the fact that $F_1$ and $F_2$ solves (\ref{teq}) at $(t_0,x_0)$, we arrive at a contradiction.

\end{proof}
\begin{Rem}
As we shall see later in the paper, this comparison principle holds in a much more general setting, without assuming the smoothness of $F_1$ and $F_2$.
\end{Rem}
\begin{Rem}
The monotonicity condition on the solution of (\ref{teq}) is both i) fundamental to have a weak sort of parabolic behaviour for (\ref{teq}) and ii) general for the problem we consider here.
\end{Rem}
In addition to establishing uniqueness of smooth increasing solutions of (\ref{teq}), the previous result is the main ingredient to apply the theory of viscosity solutions to study (\ref{teq}).\\

Let us end this section on the comparison principle and spectral dominance by a comment concerning the free probabilities theory. Although we do not introduce a precise free probability space nor present any proofs, we mention what we believe is the natural counter part of spectral dominance in free probabilities. A self adjoint free random variable has a law which is supported on the real line, hence a notion of cumulative distribution function can be introduced for which the notion of spectral dominance can naturally be adapted. Furthermore, the analogue of the comparison principle is that adding a free Brownian motion to free random variables preserves the spectral dominance. 

\subsection{Viscosity solutions of the primitive equation}
The theory of viscosity solutions, presented in \citep{crandall1983viscosity}, has already been extended to equations involving non-local operators, see for instance \citep{awatif,barles2008second,arisawa2008remark} for general aspects of this extension. The definitions and results we now give are merely adaptations of the works aforementioned to the case of (\ref{teq}). No a priori knowledge on viscosity solutions is needed to fully understand this section and we refer to \citep{crandall1992user} for more details on viscosity solutions.

The equation \eqref{teq} falls in the scope of the general theory only because we consider non decreasing solutions (in the spatial variable). At least formally, we can change \eqref{teq} in 
\begin{equation}\label{teq+}
\partial_t F + (\partial_x F)_+ \tilde{H}[F] = 0 \text{ in } (0,\infty)\times \mathbb{R}.
\end{equation}
This equation is a proper equation for the theory of viscosity solutions.

Let us insist briefly that, as we shall see in the last part of the paper, the following results are not particular to the Dyson case and shall be stated in more general framework later on. We begin with the following definition.

\begin{Def}\label{defvisc}
\begin{itemize}
\item  An upper semi continuous (usc in short) function $F$ is said to be a viscosity subsolution  of (\ref{teq+}) if for any smooth function $\phi \in \mathcal{C}^{1,1}_b$\footnote{$\mathcal{C}^{1,1}_b$ is the space of bounded functions with bounded second order derivatives.}, $(t_0,x_0) \in (0,\infty)\times \mathbb{R}$ point of strict maximum of $F - \phi$ the following holds
\begin{equation}
\partial_t \phi(t_0,x_0) + (\partial_x \phi(t_0,x_0))_+\tilde{H}[\phi(t_0)](x_0) \leq 0.
\end{equation}
\item A lower semi continuous (lsc in short) function $F$ is said to be a viscosity supersolution  of (\ref{teq+}) if for any smooth function $\phi \in \mathcal{C}^{1,1}_b$, $(t_0,x_0) \in (0,\infty)\times \mathbb{R}$ point of strict minimum of $F - \phi$ the following holds
\begin{equation}
\partial_t \phi(t_0,x_0) + (\partial_x \phi(t_0,x_0))_+\tilde{H}[\phi(t_0)](x_0) \geq 0.
\end{equation}
\item A viscosity solution $F$ of \eqref{teq+} is an usc viscosity subsolution such that $F_*$ is a supersolution where $F_*(t,x) = \underset{0 \leq s \to t,y\to x}{\liminf}F(s,y)$.
\item By extension, a function $F$ such that for all $t\geq 0$, $F(t)$ is non decreasing, is a viscosity solution of \eqref{teq} if it is a viscosity solution of \eqref{teq+}.
\end{itemize}
\end{Def}
\begin{Rem}\label{incretest}
When we are interested in non decreasing functions $F$ of the space variable, the definitions of sub and supersolutions are equivalent when one replaces the positive part of $\partial_x \phi(t_0,x_0)$ by $\partial_x \phi(t_0,x_0)$ itself. Indeed for any $t,h \geq 0$, if $x$ is a maximum of $F(t) - \phi(t)$, then
\begin{equation}
F(t,x) - \phi(t,x+ h) \leq F(t,x+h) - \phi(t,x+h) \leq F(t,x) - \phi(t,x),
\end{equation}
where we have used the fact that $F(t)$ is increasing in the first inequality. From this we easily deduce that $\lim_{h \to 0} (\phi(t,x+h) - \phi(t,x))h^{-1} \geq 0$ since $\phi$ is smooth. This justifies the last point of the previous definition.
\end{Rem}
An immediate result is that any smooth solution of \eqref{teq+} is also a viscosity solution of \eqref{teq+}. Indeed the following holds.
\begin{Prop}
Any bounded function $F$, such that $F,\partial_x F \in \mathcal{C}^{\alpha}$,\footnote{ $\mathcal{C}^{\alpha}$ is the space H\"older continuous function of exponent $\alpha$} for $\alpha > 0$, which is a solution of \eqref{teq+} is also a viscosity solution of \eqref{teq}.
\end{Prop}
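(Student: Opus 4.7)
The plan is to verify the sub- and supersolution inequalities of Definition \ref{defvisc} directly, exploiting the fact that touching of $F$ and a test $\phi$ at an extremum provides a pointwise monotonicity of the integrand defining $\tilde{H}$. A preliminary step is to check that $\tilde{H}[F(t)](x)$ is well defined under the assumed regularity. H\"older continuity of $\partial_x F$ yields the first order Taylor remainder
$$F(t,y) - F(t,x) + \partial_x F(t,x)(x-y) = O(|x-y|^{1+\alpha}),$$
so after removing the odd piece $\partial_x F(t,x)/(x-y)$ by the symmetry argument already used after (\ref{halflapl}), the local integrand decays like $|x-y|^{\alpha-1}$ and is integrable; the tail is controlled by $\|F\|_\infty$. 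Hence the classical identity $\partial_t F = -(\partial_x F)_+ \tilde{H}[F]$ makes sense pointwise.

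To check the subsolution property, fix $\phi \in \mathcal{C}^{1,1}_b$ and a strict maximum point $(t_0,x_0)$ of $F - \phi$. Standard calculus at the extremum gives $\partial_t F(t_0,x_0) = \partial_t \phi(t_0,x_0)$ and $\partial_x F(t_0,x_0) = \partial_x \phi(t_0,x_0)$, while the maximum inequality rewrites as
$$\phi(t_0,x_0) - \phi(t_0,y) \leq F(t_0,x_0) - F(t_0,y), \qquad \forall y \in \mathbb{R}.$$
Dividing by $(x_0-y)^2 > 0$ and integrating, one infers $\tilde{H}[\phi(t_0)](x_0) \leq \tilde{H}[F(t_0)](x_0)$. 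Substituting this comparison, together with the two equalities above, into the classical equation for $F$ at $(t_0,x_0)$, and using $(\partial_x \phi(t_0,x_0))_+ \geq 0$, produces
$$\partial_t \phi(t_0,x_0) + (\partial_x \phi(t_0,x_0))_+ \tilde{H}[\phi(t_0)](x_0) \leq \partial_t F(t_0,x_0) + (\partial_x F(t_0,x_0))_+\tilde{H}[F(t_0)](x_0) = 0,$$
which is exactly the subsolution inequality. The supersolution inequality follows by the symmetric argument at a strict minimum: there the pointwise comparison reverses, so does the inequality on $\tilde{H}$, and the same substitution yields the opposite sign.

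Since $F$ is continuous, $F_* = F$, and the above shows that $F$ is a viscosity solution of (\ref{teq+}). Assuming further that $F(t)$ is non-decreasing in $x$, which is the standing monotonicity of the problem (and in fact what makes the notion ``viscosity solution of (\ref{teq})'' meaningful via the extension clause of Definition \ref{defvisc}), this simultaneously gives the conclusion for (\ref{teq}).

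The only genuinely delicate step is the rigorous passage from the pointwise inequality between $\phi(t_0,x_0) - \phi(t_0,y)$ and $F(t_0,x_0) - F(t_0,y)$ to the corresponding inequality between the singular integrals. I expect to handle this by excising a symmetric ball of radius $\varepsilon$ around $x_0$, where both integrals are absolutely convergent and the monotonicity transfers termwise, and then sending $\varepsilon \to 0$, dominating the singular piece by the uniform $\mathcal{C}^{1,\alpha}$ bounds on $F(t_0)$ and $\phi(t_0)$. All remaining steps are immediate consequences of the ellipticity property (\ref{elliptic}) and first order calculus at an extremum.
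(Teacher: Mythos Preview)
Your argument is correct and follows the same route as the paper's sketch: once the regularity guarantees that $\partial_t F$, $\partial_x F$ and $\tilde{H}[F]$ all make sense pointwise, the ellipticity of $\tilde{H}$ at the touching point (your inequality $\tilde{H}[\phi(t_0)](x_0)\le \tilde{H}[F(t_0)](x_0)$) together with matching first derivatives gives the viscosity inequalities. The paper merely says ``it then suffices to use the ellipticity of $\tilde{H}$''; you have spelled out exactly that computation, including the correct handling of the principal value and the monotonicity caveat needed to pass from \eqref{teq+} to \eqref{teq}.
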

\begin{proof}
The proof of this result is straightfroward so we only sketch it here. For such a function $F$, we immediately get that $\partial_t F \in \mathcal{C}^{\alpha}$. Thus $F$ is a classical solution of \eqref{teq+}, in the sense that all the terms in the equation make sense and that the equation is satisfied everywhere. It then suffices to use the ellipticity of $\tilde{H}$ to verify that $F$ is indeed a viscosity solution of \eqref{teq+}.
\end{proof}

One can easily adapt the general theory of viscosity solutions to obtain the two following results.
\begin{Theorem}\label{pll1}
For $F_0$ in $BUC(\mathbb{R})$ \footnote{ : $BUC(\Omega)$ is the space of bounded uniformly continuous functions on $\Omega$.} :
\begin{itemize}
\item There exists a unique $F \in BUC([0,T]\times \mathbb{R})$ for any $T > 0$, viscosity solution of \eqref{teq+} such that $F(0) = F_0$.
\item Provided that $F_0$ is lipschitz continuous, for any $t\geq 0$, $F$ satisfies $\|\partial_x F(t)\|_{\infty} \leq \|\partial_x F_0\|_{\infty}$.
\end{itemize}
\end{Theorem}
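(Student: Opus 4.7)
The plan is to establish the theorem in three steps: (i) a comparison principle for viscosity sub- and supersolutions of \eqref{teq+}, which yields uniqueness; (ii) existence via Perron's method using Step (i); and (iii) the Lipschitz bound by combining translation invariance of the equation with the comparison principle. Throughout, the two structural ingredients I would rely on are the ellipticity property \eqref{elliptic} of $\tilde{H}$ and the splitting of the singular integral defining $\tilde{H}[\phi]$ into a near-field piece, controlled by the local $\mathcal{C}^{1,1}$ regularity of its argument, and a far-field piece, controlled only by the $L^{\infty}$ norm of the argument.

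For the comparison principle, I would let $F_1$ be a bounded usc subsolution and $F_2$ a bounded lsc supersolution of \eqref{teq+} with $F_1(0,\cdot)\le F_2(0,\cdot)$, replace $F_2$ by $F_2+\eta/(T-t)$ to reduce to the strict case, and use the standard doubling of variables
\[
\Phi_{\epsilon,\alpha}(t,x,s,y)=F_1(t,x)-F_2(s,y)-\frac{|x-y|^{2}}{\epsilon^{2}}-\frac{(t-s)^{2}}{\epsilon^{2}}-\alpha(|x|^{2}+|y|^{2}).
\]
At a maximum point $(\bar t,\bar x,\bar s,\bar y)$, the penalization function plays the role of the test function $\phi$ in Definition~\ref{defvisc}; since it is not globally $\mathcal{C}^{1,1}_b$, I would truncate it outside a neighbourhood of $(\bar x,\bar y)$ and patch it with $F_1$ (resp.\ $F_2$) to obtain admissible test functions, using the standard trick that only the values of $\phi$ near the contact point enter through the near-field integral. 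The near-field of $\tilde H$ applied to these test functions matches for both $F_1$ and $F_2$ because the $x$-derivatives at the contact point agree and the quadratic part cancels by symmetry of the kernel $|x-y|^{-2}$; the far-field is handled by replacing $\phi$ by $F_1,F_2$ on the complement, using their $L^{\infty}$ bound and the fact that $F_1(\bar t,\bar x)-F_2(\bar s,\bar y)\ge F_1(\bar t,\bar x+z)-F_2(\bar s,\bar y+z)$ up to penalty errors. Subtracting the sub- and supersolution inequalities, the nonlocal terms combine with the correct sign thanks to \eqref{elliptic}, and the $\eta$-perturbation gives a strictly positive leftover that produces a contradiction after sending $\alpha\to 0$ and then $\epsilon\to 0$.

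For existence, I would apply Ishii's Perron method: construct global barriers by taking $F_0^{\pm}\in\mathcal{C}^{1,1}_b$ with $\|F_0^{\pm}-F_0\|_\infty$ arbitrarily small, noting that $\tilde H[F_0^{\pm}]$ is uniformly bounded so that $F_0^{\pm}(x)\pm Ct$ are respectively super- and subsolutions for $C$ large enough. The supremum of all subsolutions lying below the upper barrier is then an envelope viscosity solution; the comparison principle of Step (i) forces its usc and lsc envelopes to coincide on $\{0\}\times\mathbb{R}$ with $F_0$ and more generally to coincide everywhere, which gives a solution in $BUC([0,T]\times\mathbb{R})$. For the Lipschitz bound, fix $h>0$ and observe that $G(t,x):=F(t,x+h)$ is again a viscosity solution, with initial datum $F_0(\cdot+h)\le F_0(\cdot)+\|\partial_x F_0\|_\infty h$; the comparison principle then yields $F(t,x+h)\le F(t,x)+\|\partial_x F_0\|_\infty h$ for all $(t,x)$, and the opposite inequality follows by swapping the roles, giving the claim.

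The main obstacle is unquestionably the comparison step: unlike the smooth case sketched just before the theorem, the singular nonlocal operator $\tilde H$ is not a pointwise operator, so the penalization test function does not, on its own, give access to the value of $\tilde H[F_i]$ at $(\bar t,\bar x)$ or $(\bar s,\bar y)$. The technical heart of the proof is therefore the splitting near/far of the integral and the careful passage to the limit $\epsilon\to 0$, exploiting that the quadratic penalty is even in $x-y$ (so the singular near-field contribution cancels between $F_1$ and $F_2$) and that the far-field uses only $L^{\infty}$ bounds on $F_1,F_2$. Once this is done, the rest of the argument follows the classical viscosity solution toolbox.
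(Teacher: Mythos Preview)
Your proposal is correct and, for the comparison principle, essentially matches the paper's approach: the paper (in its general framework, Lemma~\ref{complip} and Remark~\ref{remdysoncomp}) also doubles the variables, uses the splitting $\tilde H=\mathcal{I}_{1,\delta}+\mathcal{I}_{2,\delta}$ into a near-field controlled by the $C^{1,1}$ test function and a far-field where the maximality inequality $F_1(\bar t,\bar x)-F_2(\bar s,\bar y)\ge F_1(\bar t,\bar x+z)-F_2(\bar s,\bar y+z)$ gives the correct sign, and notes that in the Dyson case $g\equiv 1$ no logarithmic error arises so the argument closes without any Lipschitz hypothesis on $F_1,F_2$.

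Where you genuinely differ is on existence and on the Lipschitz bound. For existence the paper does \emph{not} use Perron's method: it regularises the equation (smoothing the singular kernel and semi-discretising the $\partial_xF$ factor), obtains an a~priori Lipschitz estimate on the approximate solutions via the equation satisfied by $\mu=\partial_xF$ and a Gr\"onwall argument, and then passes to the limit by compactness. Your Perron approach with the barriers $F_0^\pm(x)\pm Ct$ (valid because $\tilde H[F_0^\pm]$ is bounded for $F_0^\pm\in C^{1,1}_b$) is shorter and more direct in the Dyson case; the paper's construction is heavier but is designed to work in the general framework of Section~4 where the operator does not commute with translations and Perron barriers are less explicit. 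For the Lipschitz bound, your argument (translation invariance of $\tilde H$ plus comparison of $F(t,\cdot+h)$ with $F(t,\cdot)+\|\partial_xF_0\|_\infty h$) is exactly the shortcut the paper mentions in the Remark following Proposition~\ref{existlip} as the ``almost trivial'' route available in the Dyson case; in the general setting the paper instead derives the estimate from the PDE for $\partial_xF$.
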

\begin{Theorem}
Let $F_1$ and $F_2$ be respectively bounded viscosity subsolution and supersolution of \eqref{teq+}. Then 
\begin{equation}
F_1|_{t = 0} \leq F_2|_{t = 0} \Rightarrow F_1 \leq F_2.
\end{equation}
\end{Theorem}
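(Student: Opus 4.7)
The plan is to prove this comparison result by the classical doubling-of-variables argument of viscosity solutions theory, with two main adaptations to the integro-differential setting of \eqref{teq+}: the use of an equivalent ``split'' formulation of the viscosity inequality, decomposing $\tilde H = \tilde H^\delta + \tilde H_\delta$ on $|u|\le\delta$ and $|u|>\delta$ (the local part acting on the test function, the far part on the actual sub-/super-solution), so that the unbounded quadratic piece of the natural doubling test functions only contributes locally; and the use of a bounded coercive spatial penalty, compatible with the space $\mathcal{C}^{1,1}_b$ of Definition \ref{defvisc}, to ensure that the supremum in the doubling procedure is attained.

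First, I reduce to a strict subsolution by a linear-in-time shift: for every $\eta > 0$, $\tilde F_1 := F_1 - \eta t$ is a bounded viscosity subsolution of $\partial_t F + (\partial_x F)_+ \tilde{H}[F] \leq -\eta$ (since $\tilde H$ annihilates functions of $t$ alone), so it suffices to show $\tilde F_1 \leq F_2$ and let $\eta \downarrow 0$. Arguing by contradiction, assume $M := \sup_{[0,T]\times \mathbb{R}}(\tilde F_1 - F_2) > 0$, and introduce
\begin{equation*}
\Phi(t,s,x,y) := \tilde F_1(t,x) - F_2(s,y) - \frac{(x-y)^2}{\varepsilon} - \frac{(t-s)^2}{\alpha} - \mu\bigl(\chi(x) + \chi(y)\bigr),
\end{equation*}
where $\chi \in \mathcal{C}^{1,1}_b$ is smooth, bounded, and coercive on a ball of radius $R(\mu) \to \infty$ as $\mu \to 0$. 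For $\mu$ small, $\sup \Phi \geq M/2 > 0$ and is attained at some $(\bar t, \bar s, \bar x, \bar y)$; standard estimates give $\varepsilon^{-1}(\bar x - \bar y)^2 + \alpha^{-1}(\bar t - \bar s)^2 \to 0$ as $\varepsilon, \alpha \to 0$, and $\bar t, \bar s > 0$ for $\varepsilon, \alpha$ small, by the initial ordering and continuity of $F_1, F_2$.

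Applying the split subsolution inequality at $(\bar t, \bar x)$ and the split supersolution inequality at $(\bar s, \bar y)$, and subtracting (the $\alpha$-dependent $\partial_t$ terms cancel identically), one obtains
\begin{equation*}
\eta \leq (D_2)_+ \bigl[\tilde H^\delta[\phi_2](\bar y) + \tilde H_\delta[F_2(\bar s)](\bar y)\bigr] - (D_1)_+ \bigl[\tilde H^\delta[\phi_1](\bar x) + \tilde H_\delta[\tilde F_1(\bar t)](\bar x)\bigr],
\end{equation*}
where $D_1 := 2\varepsilon^{-1}(\bar x - \bar y) + \mu\chi'(\bar x)$, $D_2 := 2\varepsilon^{-1}(\bar x - \bar y) - \mu\chi'(\bar y)$, and $\phi_1, \phi_2$ are the standard quadratic-plus-$\mu\chi$ doubling test functions.

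The main obstacle is showing that this right-hand side vanishes in the limit. For the local parts, direct computation on the quadratic penalty gives $\tilde H^\delta[\phi_j] = O(\delta/\varepsilon)$; combined with $(D_j)_+ = O(\varepsilon^{-1/2})$ (since $\bar x - \bar y = o(\varepsilon^{1/2})$), this contributes $o(\delta\,\varepsilon^{-3/2})$. For the far parts, the fundamental max-point inequality $\Phi(\bar t, \bar s, \bar x + u, \bar y + u) \leq \Phi(\bar t, \bar s, \bar x, \bar y)$ yields
\begin{equation*}
\tilde F_1(\bar t, \bar x) - \tilde F_1(\bar t, \bar x + u) \geq F_2(\bar s, \bar y) - F_2(\bar s, \bar y + u) - O(\mu)
\end{equation*}
uniformly in $u$, whence $\tilde H_\delta[\tilde F_1(\bar t)](\bar x) - \tilde H_\delta[F_2(\bar s)](\bar y) \geq -O(\mu/\delta)$, with a further $O(\mu/\delta)$ error coming from the $D_1$--$D_2$ mismatch and the $L^\infty$ bound $|\tilde H_\delta| = O(\delta^{-1})$. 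A choice such as $\delta = \varepsilon^{3/2}$ and $\mu = \varepsilon^3$ makes the right-hand side vanish as $\varepsilon, \alpha \to 0$, contradicting $\eta > 0$ and completing the proof.
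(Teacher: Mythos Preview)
Your approach is essentially the same as the paper's: the proof of Lemma~\ref{complip} (which, per Remark~\ref{remdysoncomp}, covers the Dyson case directly without any Lipschitz hypothesis) uses exactly the doubling of variables combined with the split $\tilde H = \mathcal I_{1,\delta} + \mathcal I_{2,\delta}$ and the max-point inequality for the nonlocal far part. The only cosmetic differences are that the paper uses the $\lambda$-discounting trick rather than your $-\eta t$ shift, and takes $\delta = \epsilon^2$ (which needs only the crude bound $|\bar x-\bar y|=O(\epsilon^{1/2})$) whereas your choice $\delta=\varepsilon^{3/2}$ relies on the sharper $|\bar x-\bar y|=o(\varepsilon^{1/2})$ that is delicate to justify when $\mu$ is tied to $\varepsilon$; either choice works once this is handled.
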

As we shall prove more general results in the last section of this paper, we postpone the proofs of those theorems.\\
As already mentioned above, the equation \eqref{teq} does not fall in the framework of viscosity solutions in general but it is the case when one is concerned with non decreasing solutions. In this context, as a consequence of the previous theorems and the remark \ref{incretest}, we can establish the following result concerning viscosity solutions of \eqref{teq}.
\begin{Cor}\label{corpll}
Let $F_1$ and $F_2$ be two bounded non-decreasing viscosity solutions of \eqref{teq}. Then
\begin{equation}
F_1|_{t = 0} \leq F_2|_{t = 0} \Rightarrow F_1 \leq F_2.
\end{equation}
\end{Cor}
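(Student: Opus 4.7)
The plan is to reduce Corollary \ref{corpll} to a direct application of Theorem 2 (the comparison principle for \eqref{teq+}). The whole content is bookkeeping about which equation governs which object, since \eqref{teq} has been purposely defined so that the non-decreasing case coincides with the proper equation \eqref{teq+}.

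First I would invoke the last clause of Definition \ref{defvisc}: by construction, a non-decreasing viscosity solution of \eqref{teq} is \emph{by definition} a viscosity solution of \eqref{teq+}. Hence $F_1$ is a bounded usc viscosity subsolution of \eqref{teq+}, and similarly, by the definition of ``viscosity solution'' via its lsc envelope, the function $(F_2)_*$ is a bounded lsc viscosity supersolution of \eqref{teq+}. One may also recall, via Remark \ref{incretest}, that the replacement of $\partial_x\phi$ by $(\partial_x\phi)_+$ is transparent for test functions touching a non-decreasing function at an extremum, so that no information is lost in the passage from \eqref{teq} to \eqref{teq+}.

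Next I would apply Theorem 2 to the pair $(F_1,(F_2)_*)$. On the initial slice one has $(F_2)_*(0,x)=F_2(0,x)$ (the initial data being prescribed as a continuous function in the ambient setting of Theorem \ref{pll1}), so the hypothesis $F_1|_{t=0}\leq F_2|_{t=0}$ of the corollary gives precisely $F_1|_{t=0}\leq (F_2)_*|_{t=0}$. Theorem 2 then yields $F_1\leq (F_2)_*$ on $(0,\infty)\times\mathbb{R}$, and since $(F_2)_*\leq F_2$ by definition of the lsc envelope, the desired inequality $F_1\leq F_2$ follows.

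There is no real obstacle here; the corollary is essentially an unpacking of Definition \ref{defvisc}. If anything, the only minor point requiring attention is the initial-data comparison, which is handled by the continuity of the initial data and the fact that both $F_1$ and $F_2$ are assumed bounded. The substantive work has been deferred to the proof of Theorem 2, which—as announced in the paper—will be carried out in the abstract framework of the final section.
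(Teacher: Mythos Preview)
Your proposal is correct and follows exactly the approach the paper indicates: the corollary is stated as an immediate consequence of Theorem 2 together with Definition \ref{defvisc} and Remark \ref{incretest}, and you have unpacked precisely that. The only care needed, which you flag, is the initial-trace comparison for $(F_2)_*$; this is indeed handled by the continuity of the initial datum assumed in the surrounding framework.
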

The question of existence of viscosity solutions of (\ref{teq}) is treated in the next section on the convergence of the system of $N$ particles. Let us insist on the obvious fact that the previous result implies uniqueness of a viscosity solution of \eqref{teq} for a bounded non decreasing initial condition. 


We are not going to present in full details a theory of viscosity solutions of (\ref{teq}) or \eqref{teq+} as it would be merely a simple extensions of the existing literature aforementioned. However, let us state that the main advantage of the viscosity solution theory (besides the obtention of uniqueness results) is the strong stability properties that the comparison principle offers. For instance $L^{\infty}$ estimates are often derived from comparing a viscosity solution with either a constant or a more specific sub or supersolution.\\

\subsection{Convergence of the system of $N$ particles}
In this section we establish that the counting function $F_N$ of the $N$ eigenvalues system (\ref{dyson}), defined by
\begin{equation}
F_N(t,x) = N^{-1}\#\{i, \lambda^i_t \leq x\},
\end{equation}
converges toward a viscosity solution of (\ref{teq}) as $N \to \infty$. This result clearly justifies the use of the theory of viscosity solutions to treat equations such as (\ref{teq}).

\begin{Theorem}\label{convergence}
Assume that the empirical measure $µ_N^0$ of initial conditions of (\ref{dyson}) defined by
\begin{equation}
µ_N^0 = \frac{1}{N}\sum_{i = 1}^N \delta_{x_i}
\end{equation}
converges, almost surely, weakly toward a measure $µ_0$. Then, almost surely, the sequence $(F_N)_{N \geq 1}$ converges almost everywhere toward the unique viscosity solution $F$ of (\ref{teq}) which satisfies $F(0,x) = µ_0((-\infty,x])$ almost everywhere.
\end{Theorem}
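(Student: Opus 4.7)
The natural approach is the method of half-relaxed limits from the theory of viscosity solutions, combined with Ito calculus on the particle SDE \eqref{dyson}. First I would establish tightness of the family $(F_N)_{N \geq 1}$. Since $0 \le F_N \le 1$ and $F_N(t,\cdot)$ is non-decreasing, it suffices to prevent mass from escaping to infinity. Applying Ito's formula to $|\lambda^i_t|^2$ via \eqref{dyson} and using the Burkholder--Davis--Gundy inequality yields a moment bound $\mathbb E[\max_{1 \le i \le N}|\lambda^i_t|^2] \le C(T)(1 + \mathbb E[\max_i |\lambda^i_0|^2])$ for $t \in [0,T]$, uniform in $N$. This allows one to define the relaxed limits
$$\bar F(t,x) = \limsup_{\substack{N \to \infty \\ (s,y) \to (t,x)}} F_N(s,y), \qquad \underline F(t,x) = \liminf_{\substack{N \to \infty \\ (s,y) \to (t,x)}} F_N(s,y),$$
which are respectively usc and lsc, non-decreasing in $x$, bounded in $[0,1]$, and satisfy $\underline F \le \bar F$. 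The weak convergence hypothesis on $\mu_N^0$ gives $\bar F(0,x) = \underline F(0,x) = \mu_0((-\infty,x])$ at every continuity point of $x \mapsto \mu_0((-\infty,x])$.

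The core step is to show that, almost surely, $\bar F$ is a viscosity subsolution and $\underline F$ a viscosity supersolution of \eqref{teq+}. For the subsolution, let $\phi \in \mathcal{C}^{1,1}_b$ and $(t_0,x_0)$ be a strict local maximum of $\bar F - \phi$. Standard extraction produces, along a subsequence, points $(t_N,x_N) \to (t_0,x_0)$ at which $F_N - \phi$ attains a local maximum. Since $F_N(t_N,\cdot)$ is a right-continuous step function with jumps of size $1/N$ at the particles, the maximum must be attained at a particle position $x_N = \lambda^{k_N}_{t_N}$. Along the trajectory of this particle one has $F_N(t,\lambda^{k_N}_t) \equiv k_N/N$ almost surely (no collisions), and the maximum condition translates into $\phi(t,\lambda^{k_N}_t) \ge \phi(t_N,x_N)$ in a neighborhood of $t_N$. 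Applying Ito's formula to $\phi(t,\lambda^{k_N}_t)$ via \eqref{dyson}, and noting that the martingale contribution has quadratic variation of order $(\partial_x\phi)^2/N$ and vanishes in the limit after expectations (via BDG), one extracts the drift relation between $\partial_t\phi$, $\partial_x\phi$, and the discrete Hilbert sum $\frac{1}{N}\sum_{j\ne k_N}(x_N - \lambda^j_{t_N})^{-1}$ at $(t_N,x_N)$. Combining with the ellipticity inequality $\tilde H[F_N(t_N)](x_N) \ge \tilde H[\phi(t_N)](x_N)$, which follows from the contact condition $F_N - \phi \le \mathrm{const}$ (the same type of inequality that gives \eqref{elliptic}), and passing to $N \to \infty$ yields the subsolution inequality
$$\partial_t \phi(t_0,x_0) + (\partial_x \phi(t_0,x_0))_+ \tilde H[\phi(t_0)](x_0) \le 0.$$
The supersolution property for $\underline F$ is established symmetrically with a particle playing the role of a minimum.

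Once the sub- and super-solution properties are in hand, the comparison principle of Corollary \ref{corpll} applied with matching initial traces gives $\bar F \le \underline F$, so $\bar F = \underline F$ coincides almost everywhere with the unique viscosity solution $F$ of \eqref{teq}, and the a.e.\ convergence $F_N \to F$ follows.

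The main obstacle I anticipate is the subsolution step, for two intertwined reasons. First, the $O(N^{-1/2})$ Brownian noise attached to each tracked particle competes with the deterministic drift in the pathwise maximum condition, and reconciling this with the required sign in the subsolution inequality forces a careful analysis of the direction along which the contact between $F_N$ and $\phi$ is tested (essentially, one must show that at the contact the positive part $(\partial_x\phi)_+$ is effectively governed by the ellipticity in a way compatible with the Ito drift). Second, the discrete Hilbert sum $\frac{1}{N}\sum_{j\ne k_N}(x_N - \lambda^j_{t_N})^{-1}$ is singular near $j \approx k_N$, so its identification with the finite quantity $\tilde H[\phi(t_0)](x_0)$ requires a symmetrization argument exploiting the $\mathcal{C}^{1,1}_b$ regularity of $\phi$ together with the pathwise non-collision of Dyson particles to control the near-diagonal contribution. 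This is precisely where the spectral dominance philosophy underlying the paper bridges the discrete and continuous comparison principles.
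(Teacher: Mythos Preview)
Your half-relaxed-limits framework is a reasonable alternative to the paper's compactness-plus-subsequence setup, and the broad architecture (test function $\phi$, near-contact point, It\^o along a tracked particle) is shared. However, the two ``obstacles'' you flag are genuine gaps, and the paper supplies a specific device you are missing.

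The ellipticity inequality $\tilde H[F_N(t_N)](x_N)\ge \tilde H[\phi(t_N)](x_N)$ is unusable as written: $F_N$ jumps by $1/N$ at $x_N=\lambda^{k_N}_{t_N}$, so $\tilde H[F_N](x_N)=+\infty$ and the inequality carries no information. What is actually needed is the one-sided bound $\frac{1}{N}\sum_{j\ne k_N}(x_N-\lambda^j_{t_N})^{-1}\ge \tilde H[\phi(t_0)](x_0)-o(1)$, and ``symmetrization using $\mathcal C^{1,1}_b$'' does not produce it: the contact condition gives no lower bound on the gap to the left neighbour. The paper obtains the bound by building an \emph{auxiliary} particle system $(\mu^i)$ started at time $t_0-s_N$ from positions $\min\bigl(\lambda^i_{t_0-s_N},\,(\phi(t_0-s_N))^{-1}(i/N)\bigr)$ and invoking the discrete comparison principle (Proposition~\ref{discretecomp}) to guarantee $\mu^i_t\le\lambda^i_t$. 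Since the $\mu^i$ sit essentially at the quantiles of $\phi$, their interaction sum is a controlled Riemann approximation of $\tilde H[\phi]$ (this is \eqref{comphyp}--\eqref{conclinter}); the comparison then transfers the inequality in the right direction. This auxiliary construction is the core of the paper's argument and is what your sketch lacks.

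A second, related issue: your contact point $(t_N,x_N)$ is a pathwise random time, and extracting a drift inequality at a local extremum of the semimartingale $t\mapsto\phi(t,\lambda^{k_N}_t)$ is not covered by BDG on expectations, since the time depends on the whole path. The paper avoids this by applying It\^o to $\phi(t,\mu^{i_0}_t)$ over the fixed window $[t_0-s_N,t_0]$, with $s_N\to 0$ chosen slowly enough that the martingale increment is almost surely $o(s_N)$; dividing by $s_N$ and letting $N\to\infty$ then yields the subsolution inequality directly.
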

\begin{proof}
From the results of \citep{chan,rogers}, we know that, almost surely, $(F_N)_{N \geq 1}$ is pre-compact in $\mathcal{C}([0,T],BV(\mathbb{R}))$ for any $T> 0$. We thus take a limit point $F$ of this sequence and to lighten the notations, we assume that the whole sequence converges toward the usc function $F$ (i.e. to consider the general case it suffices to replace all the $N$ in the rest of the proof by $\varphi(N)$ for some strictly increasing $\varphi: \mathbb{N}\to \mathbb{N}$).\\

We want to show that $F$ is a viscosity solution of (\ref{teq}). We only prove the subsolution property as the supersolution one can be proved following an analogous argument. (Recall that we have chosen $F$ to be usc). Let us take a function $\phi \in \mathcal{C}^{1,1}_b$ such that $F - \phi$ has a strict local maximum at $(t_0,x_0)$. We assume first that $\partial_x\phi(t_0,x_0) > 0$ and study the case $\partial_x \phi(t_0,x_0) =0$ later on. We assume that $\phi$ is a strictly increasing function with $\phi(-\infty) = 0$ and $\phi(\infty) = \beta \in[1, \infty)$.
 
We take $N$ large enough, $s_N>0$, chosen such that $s_N \to 0$ when $N \to \infty$, but as slowly as necessary. We define the system $(µ_i)_{1 \leq i}$ by 
\begin{equation}\label{defsystem}
\begin{cases}
µ^i_{t_0 -s_N} = \min(\lambda^i_{t_0-s_N}, (\phi(t_0-s_N))^{-1}(\frac{i}{N})),\\
dµ^i_t = \frac{1}{N}\sum_{j \ne i }\frac{1}{\mu^i_t - \mu^j_t} dt + \frac{2}{\sqrt{N}}dB^i_t,
\end{cases}
\end{equation}
 where the $(B^i)$ are the same brownian motions as in (\ref{dyson}).
 The system $(µ^i)$ is well defined. Let us note that there are $\lfloor\beta N\rfloor$ particles in $(µ^i)$ so that there might be more particles in $(µ^i)$ than in $(\lambda^i)$ but it does not matter as it does not alter the discrete comparison principle. By the discrete comparison principle (proposition \ref{discretecomp}), we know that for all $i, t\geq t_0 - s_N$, $µ^i_t \leq \lambda^i_t$. For all $N$, we consider an index $i_0$ (whose dependence in $N$ is not written  to simplify notations) which satisfies
 \begin{equation}
 \begin{cases}
 \limsup_{N \to \infty} \lambda^{i_0}_{t_0} \leq x_0,\\
 \frac{i_0}{N} \to F(t_0,x_0).
 \end{cases}
 \end{equation}
Let us remark that if choosing such an index is impossible, then $F_N$ does not converge toward $F$. Let us observe that
\begin{equation}
\phi(t_0-s_N,µ^{i_0}_{t_0 - s_N}) -\phi( t_0,µ^{i_0}_{t_0}) \geq \frac{i_0}{N} - \phi(t_0,\lambda^{i_0}_{t_0}),
\end{equation}
where we have used the definition of $µ^{i_0}$ and the fact that $\phi$ is increasing. Therefore, passing to the limit we deduce that 
 \begin{equation}
 \liminf_{N\to \infty}\phi(t_0-s_N,µ^{i_0}_{t_0 - s_N}) -\phi( t_0,µ^{i_0}_{t_0})\geq F(t_0,x_0) - \phi(t_0,x_0) = 0.
\end{equation}
 Hence, the following holds
 \begin{equation}\label{ineq}
 \liminf_{N \to \infty}\frac{\phi(t_0-s_N,µ^{i_0}_{t_0 - s_N}) -\phi( t_0,µ^{i_0}_{t_0})}{s_N} \geq 0.
 \end{equation}
 We now focus on the interactions between the $(µ^i)$. Recall (\ref{defsystem}) and let us assume first that for all $1\leq i \leq \lfloor \beta N\rfloor$
 \begin{equation}\label{easyhyp}
 µ^i_{t_0 -s_N} = (\phi(t_0-s_N))^{-1}(\frac{i}{N}).
 \end{equation}
 Thus, because the inverse function is decreasing on $\mathbb{R}_-$ and $\mathbb{R}_+$, we deduce that (at the time $t= t_0-s_N$) :
 
\begin{equation}\label{comphyp}
\frac{1}{N}\sum_{j \ne i_0} \frac{1}{\mu^{i_0} - \mu^j} \geq \frac{1}{N} \left( \frac{1}{µ^{i_0} - µ^{i_0 + 1}} + \frac{1}{µ^{i_0} - µ^{i_0 - 1}} \right) + \int_{(-\infty,µ^{\lfloor \beta N\rfloor}]\setminus [µ^{i_0 - 2}, µ^{i_0 + 1}]} \frac{\partial_x \phi(t_0-s_N, y)}{µ^{i_0} - y}dy.
\end{equation}
 Recalling that we have assumed that $\partial_x \phi(t_0,x_0) > 0$, a Taylor development of $\phi$ around $(t_0,x_0)$ yields that the first term of the right hand side vanishes as $N \to \infty$, while the regularity of $\phi$ around $(t_0,x_0)$ (and global in time continuity ) implies that the second term of the right hand side converges toward $\tilde{H}[\phi(t_0)](x_0)$ as $N \to \infty$. Thus we deduce : 
 \begin{equation}\label{conclinter}
 \liminf_{N \to \infty} \frac{1}{N}\sum_{j \ne i_0} \frac{1}{\mu^{i_0}_{t_0-s_N} - \mu^j_{t_0-s_N}} \geq \tilde{H}[\phi(t_0)](x_0).
 \end{equation}
 We now come back to the general case in which (\ref{easyhyp}) is not satisfied for all $i$. Let us recall that for any $\epsilon > 0$, $\phi(t- \epsilon) < F(t - \epsilon)$. Thus there exists $\gamma, \delta > 0$, such that $$\phi(t- \epsilon,x + y) \geq F(t- \epsilon, x + y) + \delta \text{ for all }|y| \leq \gamma.$$ Since $(F_N)_{N \geq 1}$ is a sequence of increasing functions converging toward $F$, the previous statement holds when replacing $F$ with $F_N$ and $\delta$ by $\delta/2$ when $N$ is large enough. Thus choosing $s_N$ as big as necessary (but still such that $s_N \to 0$ when $N \to \infty$), one obtain instead of (\ref{comphyp}) the following
 \begin{equation}
 \begin{aligned}
\frac{1}{N}\sum_{j \ne i_0} \frac{1}{\mu^{i_0} - \mu^j} \geq &\frac{1}{N} \left( \frac{1}{µ^{i_0} - µ^{i_0 + 1}} + \frac{1}{µ^{i_0} - µ^{i_0 - 1}} \right) + \int_{(-\infty,µ^{\lfloor \beta N\rfloor}]\setminus [µ^{i_0 - 2}, µ^{i_0 + 1}]} \frac{\partial_x \phi(t_0-s_N, y)}{µ^{i_0} - y}dy \\
& - \frac{1}{N}\left| \sum_{j, µ^j = \lambda^j}    \frac{1}{\mu^{i_0} - \mu^j} - \frac{1}{\mu^{i_0} - (\phi(t_0-s_N))^{-1}(\frac{j}{N})}  \right|.
\end{aligned}
\end{equation}
Recalling the convergence of $F_N$ toward $F$, we deduce that the last term in the previous expression vanishes as $N \to \infty$ and thus that (\ref{conclinter}) still holds.
 From \^{I}to's lemma, we obtain that
 \begin{equation}
 \begin{aligned}
 \phi(t_0,µ^{i_0}_{t_0}) - \phi(t_0 - s_n, µ^{i_0}_{t_0 - s_N}) = &\int_{t_0 - s_N}^{t_0}\partial_x\phi(t_0-t', µ^{i_0}_{t_0 - t'})\frac{1}{N}\sum_{j \ne i_0} \frac{1}{\mu^{i_0}_{t_0 -t'} - \mu^j_{t_0 - t'}}dt'\\
 & + \int_{t_0 - s_N}^{t_0}\partial_t\phi(t_0-t', µ^{i_0}_{t_0 - t'})dt'\\
  &+ \int_{t_0 - s_N}^{t_0}\frac{1}{2N^2} \partial_{xx} \phi(t_0 - t', µ^{i_0}_{t_0 -t'}) dt' \\
  &+  \frac{1}{N}\int_{t_0 - s_N}^{t_0}\partial_x\phi(t_0-t', µ^{i_0}_{t_0 - t'})dB^{i_0}_{t_0 - t'}
 \end{aligned}
 \end{equation}
Let us now remark that we can choose $s_N$ such that, almost surely, the last term in the previous expression is of order $o(s_N)$. Recalling (\ref{ineq}), dividing by $s_N$ and passing to the limit $N \to \infty$, we finally obtain that
 \begin{equation}\label{subvisc}
 \partial_t \phi(t_0,x_0) + \partial_x \phi(t_0,x_0)\tilde{H}[\phi(t_0)](x_0) \leq 0.
 \end{equation}
 Now let us come back to the case of a test function $\phi$ which is not a strictly increasing function with specific limits at $\pm \infty$. Because we assume that $\partial_x \phi(t_0,x_0) > 0$, there exists a strictly increasing function $\psi$ such that $\psi(-\infty) = 0, \psi(+ \infty) < \infty$, such that $\psi = \phi$ on a ball centered at $(t_0,x_0)$ and $F \leq \psi \leq \phi$ everywhere. Thus from the previous calculation, we know that $\psi$ satisfies 
  \begin{equation}
 \partial_t \psi(t_0,x_0) + \partial_x \psi(t_0,x_0)\tilde{H}[\psi(t_0)](x_0) \leq 0.
 \end{equation}
Moreover, the derivatives of $\psi$ and $\phi$ at $(t_0,x_0)$ are equal and the "ellepticity" of $\tilde{H}$ implies that $\tilde{H}[\phi(t_0)](x_0) \leq \tilde{H}[\psi(t_0)](x_0)$. Hence we deduce that (\ref{subvisc}) holds as soon as $\partial_x \phi(t_0,x_0) > 0$ and it remains to show it is true when $\partial_x \phi(t_0,x_0) = 0$.\\

 Let us now assume $\partial_x \phi(t_0,x_0) = 0$. Reasoning as in the previous part, it is enough to prove (\ref{subvisc}) holds for a function $\psi$ such that the first order derivatives of $\psi$ and $\phi$ coincides at $(t_0,x_0)$ and which satisfies $\psi \leq \phi$ with equality at $(t_0,x_0)$. Because $(t_0,x_0)$ is a strict global maximum of $F - \phi$, we know that there exists a function satisfying the previously mentioned requirements as well as $\phi \geq F$ everywhere and $\phi(t)$ is strictly increasing in $x$ for any $t < t_0$. Thus we can construct for all $N \geq0$ a system of particles $(µ^i)_{i \geq 1}$ associated to $\psi$ using (\ref{defsystem}). Let us now remark that since $\partial_x \psi(t_0,x_0) = 0$, 
 \begin{equation}
 \lim_{N \to \infty} \frac{1}{N(µ^{i_0}_{t_0-s_N} - µ^{i_0 + 1}_{t_0 - s_N})} = 0,
 \end{equation}
 where $i_0$ is an index depending on $N$ chosen as in the first part of the proof. Thus recalling that (at the time $t = t_0 - s_N$)
 
 \begin{equation}
\frac{1}{N}\sum_{j \ne i_0} \frac{1}{\mu^{i_0} - \mu^j} \geq \frac{1}{N} \left( \frac{1}{µ^{i_0} - µ^{i_0 + 1}}  \right) +\int_{(-\infty,µ^{\lfloor \beta N\rfloor}]\setminus [µ^{i_0 - 1}, µ^{i_0 + 1}]} \frac{\partial_x \psi(t_0-s_N, y)}{µ^{i_0} - y}dy,
\end{equation}
we obtain
  \begin{equation}
 \liminf_{N \to \infty} \frac{1}{N}\sum_{j \ne i_0} \frac{1}{\mu^{i_0}_{t_0-s_N} - \mu^j_{t_0-s_N}} \geq \tilde{H}[\psi(t_0)](x_0).
 \end{equation}
 Following the same argument as in the first case, we obtain that $F$ is indeed a viscosity solution of (\ref{teq}).\\
 
Let us now observe that the fact that $F(t)$ converges toward $µ_0((-\infty,x])$ when $t \to 0$ is rather easy to obtain (especially because the convergence is already known) following the arguments used in \citep{chan,rogers} to obtain the convergence results.\\

We now conclude by the uniqueness of such viscosity solutions that the whole sequence $(F_N)_{N \geq 1}$ converges toward $F$, the unique viscosity solution of \eqref{teq} which satisfies $F(0,x) = µ_0((-\infty,x])$ almost everywhere.

\end{proof}
\begin{Rem}
However long it may seem, the previous proof relies almost exclusively on the spectral dominance property (both discrete and continuous).
\end{Rem}

\subsection{General results on the Dyson case}
In this section we summarize the previous result in a compact manner. We also take advantage of this section to make links between the results we gave and the existing literature on this topic. In the previous sections, we have proven the

\begin{Theorem}
Given a probability measure $µ_0$ on $\mathbb{R}$, there exists a unique viscosity solution $F$ of \eqref{teq} (in the sense of definition \ref{defvisc}) which satisfies $F(0,x) = µ_0((-\infty,x])$.
\end{Theorem}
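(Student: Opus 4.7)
The plan is to combine Theorem~\ref{convergence} and Corollary~\ref{corpll}: the former furnishes existence via the $N$-particle approximation, the latter provides uniqueness via the comparison principle for non-decreasing solutions. Throughout, let $F_0(x) := \mu_0((-\infty,x])$ denote the cumulative distribution function of $\mu_0$, which is non-decreasing, right-continuous, and takes values in $[0,1]$.

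For uniqueness, suppose $F_1$ and $F_2$ are two bounded non-decreasing viscosity solutions of \eqref{teq} with $F_i(0,\cdot) = F_0$ almost everywhere. By Definition~\ref{defvisc}, both $F_1$ and $F_2$ are usc; combined with monotonicity in $x$, this forces them to be right-continuous in $x$, as is $F_0$. Two right-continuous non-decreasing functions that agree almost everywhere agree everywhere, so $F_1(0,\cdot) \equiv F_2(0,\cdot)$ pointwise on $\mathbb{R}$. Corollary~\ref{corpll} applied in both directions then yields $F_1 \equiv F_2$.

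For existence, I would apply Theorem~\ref{convergence} to a deterministic initial configuration drawn from $\mu_0$. Let $F_0^{-1}(u) := \inf\{x \in \mathbb{R} : F_0(x) \geq u\}$ denote the quantile function of $\mu_0$, and set $x_i^N := F_0^{-1}(i/(N+1))$ for $1 \leq i \leq N$, perturbing by arbitrarily small amounts if necessary to enforce the strict ordering $x_1^N < x_2^N < \cdots < x_N^N$ required by \eqref{dyson}. A standard calculation shows that $\mu_N^0 := \frac{1}{N}\sum_{i=1}^N \delta_{x_i^N}$ converges weakly to $\mu_0$; since the construction is deterministic, the almost-sure hypothesis of Theorem~\ref{convergence} is trivially satisfied. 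Running the Dyson Brownian motion \eqref{dyson} from these initial data, Theorem~\ref{convergence} produces a viscosity solution $F$ of \eqref{teq} whose initial trace equals $F_0$ almost everywhere, hence pointwise by the right-continuity argument of the previous paragraph.

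The only technical point requiring attention is the handling of the (at most countably many) atoms of $\mu_0$, where $F_0$ has jump discontinuities and the initial datum is a priori defined only up to a Lebesgue-null set. This is the closest the argument comes to an obstacle, but it is neutralised by the observation that the viscosity framework of Definition~\ref{defvisc} works with usc functions, which in the non-decreasing setting are automatically right-continuous and therefore uniquely determined by their almost-everywhere values. No regularisation of $\mu_0$ is required, and the proof reduces to a clean concatenation of the two results already established in the previous subsections.
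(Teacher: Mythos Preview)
Your proposal is correct and follows essentially the same route as the paper, which does not give a separate argument for this theorem but simply states it as a summary of what the preceding subsections establish: existence via Theorem~\ref{convergence} (for which you correctly supply the quantile-based deterministic approximating sequence that the paper leaves implicit) and uniqueness via the comparison principle in Corollary~\ref{corpll}. Your treatment of the atoms of~$\mu_0$ through the usc/right-continuity observation is more careful than the paper, which absorbs this into the ``almost everywhere'' qualification in the statement of Theorem~\ref{convergence}.
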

The following result is in fact a corollary of the proofs of the results above.
\begin{Cor}\label{potential}
Given a probability measure $µ_0$ on $\mathbb{R}$ and a continuous real valued function $B$ which satisfies
\begin{equation}
\exists c_0 > 0, \forall x,y \in \mathbb{R}, B(x) - B(y) \geq -c_0(x-y),
\end{equation}
there exists a unique viscosity solution $F$ of 
\begin{equation}
\partial_t F + B(x)\partial_x F+ (\partial_x F) \tilde{H}[F] = 0 \text{ in } (0,\infty)\times \mathbb{R},
\end{equation}
which satisfies $F(0,x) = µ_0((-\infty,x])$.
\end{Cor}
\begin{Rem}
We do not detail the way in which the term $B \partial_x F$ has to be understood in the viscosity sense as it is straightforward form the definitions we gave.
\end{Rem}
As usual in the viscosity solution theory, the addition of a linear first order term (or a potential, to use the terminology often used in the literature concerning such problems) is transparent in the study.
\begin{Rem}
Let us mention an extension of the result above. Assume that instead of being interested in the limit spectrum of a matrix valued process $(A_t)_{t \geq 0}$ as in \eqref{processA}, we are interested in the limit spectrum of $(\psi(A_t))_{t \geq 0}$ for a non-decreasing function $\psi : \mathbb{R} \to \mathbb{R}$ (where $(A_t)_{t \geq 0}$ is the same process as in \eqref{processA}). Then, since applying $\psi$ does not affect the order of the eigenvalues, one can simply study the limit spectrum of $(A_t)_{t \geq 0}$ and then consider its image by $\psi$ to get the limit spectrum of $(\psi(A_t))_{t \geq 0}$.
\end{Rem}

Let us now comment on how those results, compared to the existing literature. A problem which seems to have attracted quite a bit of focus in the last decades is the one of characterizing non smooth solutions of \eqref{ffp}. If the question of existence of weak solutions of \eqref{ffp} is quite clear since \citep{chan,rogers}, uniqueness of such weak solutions has attracted quite a lot of attention. The earlier results of uniqueness for weak solutions of \eqref{ffp} relied on either writing the equation satisfied by the Stieljes transform of the weak solution $µ$ \citep{rogers} or by considering the system of ordinary differential equations satisfied by the moments of the weak solutions \citep{chan}. In the first case, one obtain a complex Burgers equation and in the second case a triangular system which can be solved by induction. More recently, another approach has been developed in \citep{cabanal} and later on generalized in \citep{fontbona}. It consists in looking at the Fourier transform of the weak solution and realizing that this Fourier transform solves a particular PDE. (The paper \citep{cabanal} also uses the Stieljes transform and the characterization with moments to study the so-called Wishart and Unitary cases.) In all those cases, the addition of a potential as in Corollary \ref{potential} is either impossible or requires too strong assumptions on the potential. Let us mention that more recently, \eqref{ffp} has been studied with general potentials (as in Corollary \ref{potential}) in \citep{li2020law} following a gradient-flow like approach. This last approach relies on properties of the flow associated to \eqref{ffp} in Wasserstein metrics in the same fashion as the results we present in the next section.\\

To sum up, most of the results we presented in this section were already known in the literature (except for the fact that we are stating them on \eqref{teq} instead of \eqref{ffp}). However, all the proofs of the papers mentioned above seem to work only in the Dyson case (or the Wishart or Unitary cases) and do not extend to more general situations as in the next part of this paper. More than establishing new results on the Dyson case, we believe that we are providing a general framework which unifies the results known for either the Dyson or the Wishart case and allows to study even more general situations. (We are not going to study the Unitary case in this paper but we firmly believe that a similar approach can be followed in this context. We refer to \citep{cabanal} for more details on the Unitary case.)

\subsection{Digression : Contraction property of the Dyson flow in Wasserstein metric}
This section is devoted to a property of the flow generated by either the deterministic part of (\ref{dyson}) or (\ref{ffp}) that we believe to be useful in several contexts, even though we do not use it directly in this paper. The property we focus on is that the aforementioned flows are contractions in Wasserstein spaces. This could be quite easily generalized to other context but we restrict ourselves to the Dyson case in this paper. Moreover, as already mentioned, such results were already proved in \citep{li2020law}. Because our approach relies on quite different tools, we believe it to be worth mentioning.\\

Given $N \geq 1$, let us define the semigroup of operators $(T^N_t)_{t \geq 0}$ on $D^N:=\{x \in \mathbb{R}^N, x_1<...<x_N\}$ by
\begin{equation}
T^N_t x = (\lambda^i_t)_{1 \leq i \leq N},
\end{equation}
where $(\lambda^i)_{1 \leq i \leq N}$ are the solutions of 
\begin{equation}\label{deterflow}
\begin{cases}
\frac{d}{dt} \lambda^i_t = \sum_{j \ne i } \frac{1}{\lambda^i_t - \lambda^j_t} dt, \forall i;\\
\lambda^i_0 = x^i, \forall i.
\end{cases}
\end{equation}
Clearly these operators are well defined on $D^N$. Let us now remark that for any $p \in [1, \infty]$, for any $x, y \in \mathbb{R}^N$, 
\begin{equation}\label{lpwp}
\|x - y\|_p = \mathcal{W}_p\left(\frac{1}{N}\sum_{i = 1 }^N\delta_{x^i}, \frac{1}{N}\sum_{i = 1 }^N\delta_{y^i}\right),
\end{equation}
where $\mathcal{W}_p$ is the $p$ Wasserstein distance on measure supported on the real line. Thus knowing how $(T^N_t)_{t \geq 0}$ acts on $D^N$ with a $L^p$ distance is informative on how the Dyson flow acts on the space of empirical measure equipped with the associated Wasserstein distance. The following result shows that $(T^N_t)_{t \geq 0}$ is a family of contraction on $D^N$ equipped with $L^p$ distances.
\begin{Prop}
For any $N\geq 1, p \in [1,\infty], t \geq 0$, $T_t^N$ is a contraction on $(D^N,L^p)$.
\end{Prop}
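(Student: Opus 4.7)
The plan is to linearise by studying the difference $w^i_t := \lambda^i_t - \mu^i_t$ of two trajectories $\lambda_t = T^N_t x$ and $\mu_t = T^N_t y$ starting from $x,y \in D^N$. Subtracting the ODEs in \eqref{deterflow} and performing the standard algebraic manipulation used already in Proposition \ref{discretecomp} gives
\begin{equation*}
\frac{d}{dt} w^i_t \;=\; -\sum_{j \neq i} a_{ij}(t)\,\bigl(w^i_t - w^j_t\bigr), \qquad a_{ij}(t) := \frac{1}{(\lambda^i_t - \lambda^j_t)(\mu^i_t - \mu^j_t)}.
\end{equation*}
Because each flow preserves the strict ordering (no collisions in $D^N$), the weights $a_{ij}(t)$ are strictly positive, symmetric in $(i,j)$, and locally bounded in $t$, so the matrix $M(t)$ defined by $M_{ij}=a_{ij}$ for $i\neq j$ and $M_{ii}=-\sum_{j\neq i} a_{ij}$ is a symmetric generator with zero row sums and nonnegative off-diagonal entries — exactly the generator of a symmetric time-inhomogeneous Markov chain on $\{1,\ldots,N\}$.

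For $p\in(1,\infty)$ I would compute directly
\begin{equation*}
\frac{d}{dt}\|w_t\|_p^p \;=\; p\sum_i f(w^i_t)\frac{dw^i_t}{dt} \;=\; -\frac{p}{2}\sum_{i,j} a_{ij}(t)\bigl(f(w^i_t)-f(w^j_t)\bigr)\bigl(w^i_t-w^j_t\bigr),
\end{equation*}
where $f(u)=|u|^{p-2}u$ and the last equality uses the symmetry $a_{ij}=a_{ji}$. Since $f$ is non-decreasing and $a_{ij}\geq 0$, every term of the sum is non-negative and $\|w_t\|_p$ is non-increasing.

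The endpoints $p=\infty$ and $p=1$ need separate handling because $f$ is not smooth there. For $p=\infty$, I would argue by envelope: at any index $i_0(t)$ realising $\|w_t\|_\infty$ one has $\operatorname{sgn}(w^{i_0}_t)(w^{i_0}_t - w^j_t)\geq 0$ for every $j$, so $\operatorname{sgn}(w^{i_0}_t)\,\frac{d}{dt}w^{i_0}_t\leq 0$, and a standard Danskin-type argument shows $t\mapsto \|w_t\|_\infty$ is non-increasing. For $p=1$, I would either repeat the symmetrisation with $f=\operatorname{sgn}$ after regularising $|u|$ by $\sqrt{u^2+\varepsilon}$ and sending $\varepsilon\to 0$, or — more conceptually — note that the solution operator of $\dot w = M(t)w$ is a time-ordered exponential of symmetric generators with the Markov structure above, hence a doubly stochastic matrix, and doubly stochastic matrices are contractions in every $L^p$, $p\in[1,\infty]$ (by convexity applied to rows for $p=\infty$ and to columns for $p=1$, plus interpolation).

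The principal obstacle is not conceptual but technical: the weights $a_{ij}(t)$ blow up as particles approach each other, so one must be careful that the formal computation above is justified on $[0,T]$. This is handled by invoking the fact (used already in \citep{anderson2010introduction} and in the proof of Proposition \ref{discretecomp}) that the deterministic Dyson flow admits a unique smooth solution in $D^N$ for all times, so $a_{ij}(\cdot)$ is continuous and bounded on each compact subinterval of $[0,\infty)$, making the above differentiation rigorous.
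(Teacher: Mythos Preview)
Your proof is correct and takes a genuinely different route from the paper's. The paper argues abstractly: it first uses the discrete comparison principle (Proposition \ref{discretecomp}) to show that $T^N_t$ is order-preserving, observes that $T^N_t$ commutes with translations by $\alpha\mathbf{1}$, and invokes a result of Crandall--Tartar \citep{crandall1980some} to conclude the $L^\infty$-contraction; it then uses conservation of the sum $\langle \mathbf{1}, T^N_t x\rangle$ together with another result from \citep{crandall1980some} to obtain the $L^1$-contraction; finally it interpolates the Jacobian norms $\|DT^N_t(x)\|_1,\|DT^N_t(x)\|_\infty \leq 1$ to reach all $p\in[1,\infty]$.

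Your approach is more direct and self-contained: you work with the explicit difference equation, recognise the symmetric Markov-generator structure of the coefficients $a_{ij}$, and obtain all $L^p$ contractions at once --- either through the elementary symmetrisation inequality for $p\in(1,\infty)$, or via the doubly-stochastic propagator. The paper's approach, by contrast, isolates the abstract ingredients --- order preservation, translation commutation, sum conservation --- which makes the mechanism behind the contraction transparent and the generalisation to other interaction kernels immediate (as the paper notes in the Remark following the proof). It is worth observing that your doubly-stochastic argument is essentially the content of the paper's second Remark after the proof, which records that the Jacobian $\partial\lambda^i_t/\partial\lambda^j_0$ has nonnegative entries summing to $1$; you reach the same structure through finite differences rather than derivatives, and then exploit it more fully.
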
 
\begin{proof}
From proposition \ref{discretecomp}, we known that for any $N \geq 1, t \geq 0$, $T^N_t$ is order preserving, i.e. if $x,y$ are such that $x^i\leq y^i$ for all $i$, then 
\begin{equation}
(T^N_t x)^i \leq (T^N_t y)^i, \forall i.
\end{equation}
Let us define $\bold{1} := (1,..,1) \in \mathbb{R}^N$. Let us now remark that $T^N_t$ commutes with translation in $D^N$, i.e., for any $\alpha \in \mathbb{R}$ the following holds
\begin{equation}
T^N_t(x + \alpha \bold{1}) = T^N_t x + \alpha \bold{1}.
\end{equation}
 Thus using a result in \citep{crandall1980some}, we know that $T^N_t$ is a contraction in $(D^N, L^{\infty})$. Moreover, from (\ref{deterflow}), we deduce that for any $N \geq 1$, $x \in D^N$ :
\begin{equation}
\langle \bold{1},T^N_t x\rangle = \langle \bold{1}, x\rangle.
\end{equation}
Thus using once again a result in \citep{crandall1980some}, we obtain that $T^N_t$ is a contraction in $(D^N, L^1)$. Thus $T^N_t$ is almost everywhere differentiable and 
\begin{equation}
\max\left(\|DT^N_t(x)\|_1,\|DT^N_t(x)\|_{\infty}\right) \leq 1, \text{ a.e. in } D^N.
\end{equation}
Therefore by classical interpolation results, 
\begin{equation}
\|DT^N_t(x)\|_p \leq 1, \text{ a.e. in } D^N,
\end{equation}
which ends the proof.
\end{proof}
\begin{Rem}
The interested reader could easily check that the previous result only proceeds form the fact that the interaction between two particles is invariant if we translate the two particles ($L^{\infty}$ contraction), on the symmetry of the interaction ($L^1$ contraction), and obviously on the fact that a comparison principle can be stated (which has to do with decreasing properties of the interaction). Thus it holds true for instance for every interactions of the form $|x-y|^{\beta-1}(x-y)$ for $\beta < 0$.
\end{Rem}
\begin{Rem}
We could have formulated the previous result in the following way. Let us consider for $1 \leq i \leq N$, $\lambda^i_t$ as a function of the time and of the initial conditions $(\lambda^j_0)_{1 \leq j \leq N}$. The comparison principle implies that for all $t \geq 0$ and $1\leq i,j \leq N$, 
\begin{equation}\label{lambdaijrem}
\frac{d \lambda^i_t}{d \lambda^j_0} \geq 0.
\end{equation}
On the other hand, because the sum of the family $(\lambda^j_0)_{1 \leq j \leq N}$ is preserved through time, the sum over $j$ of the derivatives in \eqref{lambdaijrem} is equal to $1$. Thus, we de deduce that 
for all $t \geq 0$ and $1\leq i,j \leq N$, 
\begin{equation}
0 \leq \frac{d \lambda^i_t}{d \lambda^j_0} \leq 1.
\end{equation}

\end{Rem}
Using the previous result, we can now state its continuous analogue. For any probability measure $µ$ on $\mathbb{R}$, we define $\mathcal{T}_tµ$ as $\partial_x F(t)$ where $F$ is the unique viscosity solution of (\ref{teq}) which satisfies for almost every $x \in \mathbb{R}$, $F(0,x) = µ((-\infty,x])$. For any $t \geq 0$, $\mathcal{T}_t$ is clearly defined as an operator from $\mathcal{P}(\mathbb{R})$, the set of probability measures on $\mathbb{R}$, into itself. 
\begin{Prop}
The family $(\mathcal{T}_t)_{t \geq 0}$ is a family of contraction on $(\mathcal{P}(\mathbb{R}), W_p)$ for any $p\in [1,\infty]$.
\end{Prop}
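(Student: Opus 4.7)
The plan is to lift the discrete $L^p$ contraction of the previous proposition to the continuous flow via empirical approximation of arbitrary measures. Given $\mu,\nu \in \mathcal{P}(\mathbb{R})$ with finite $p$-th moment, I would discretize them by quantile sampling: set $x^i_N := F_\mu^{-1}(i/(N+1))$ and $y^i_N := F_\nu^{-1}(i/(N+1))$ for $1 \leq i \leq N$ (perturbing by $\epsilon/N$ if necessary to land in $D^N$), and denote by $\mu^N$, $\nu^N$ the associated uniform empirical measures. Using the standard identity that the $W_p$-distance on $\mathbb{R}$ equals the $L^p([0,1])$-distance between the respective quantile functions, the step-function approximations of $F_\mu^{-1}$ and $F_\nu^{-1}$ converge in $L^p$, giving $W_p(\mu^N,\mu) \to 0$ and $W_p(\nu^N,\nu) \to 0$, hence also $W_p(\mu^N,\nu^N) \to W_p(\mu,\nu)$.

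Applying the preceding proposition and the isometry (\ref{lpwp}), I would immediately obtain for every $t \geq 0$
\begin{equation*}
W_p(\mu^N_t,\nu^N_t) = \|T^N_t x_N - T^N_t y_N\|_p \leq \|x_N - y_N\|_p = W_p(\mu^N,\nu^N),
\end{equation*}
where $\mu^N_t$ and $\nu^N_t$ are the empirical measures of $T^N_t x_N$ and $T^N_t y_N$ respectively. It then remains to identify the limit of $\mu^N_t$ as $\mathcal{T}_t \mu$ and to pass to the limit in $W_p$. The former follows from the deterministic analogue of Theorem \ref{convergence} (the It\^o expansion in that proof degenerates to a plain chain rule, the $O(N^{-1/2})$ martingale and $O(N^{-2})$ curvature terms being absent), yielding locally uniform convergence of the counting functions of $\mu^N_t$ toward the unique viscosity solution of (\ref{teq}) issued from $F_\mu$, whose spatial derivative is $\mathcal{T}_t \mu$ by definition.

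To upgrade this weak convergence to convergence in $W_p$ I would use a uniform tail control on the $(x^i_N)$ and its propagation along the flow. The discrete spectral dominance of Proposition \ref{discretecomp}, applied between $T^N_t x_N$ and the trivial flow started from a well-spaced initial condition with known tails (e.g.\ scaled quantiles of a Gaussian), gives a uniform-in-$N$ control of the $p$-th moments of $\mu^N_t$, which combined with weak convergence yields $W_p$-convergence for $p < \infty$. For $p = \infty$, the monotonicity of the counting functions together with the local uniform convergence upgrades to uniform convergence of the quantile functions outside the plateaus of $F$, which is precisely $W_\infty$-convergence. By lower semicontinuity of $W_p$, I finally deduce
\begin{equation*}
W_p(\mathcal{T}_t \mu, \mathcal{T}_t \nu) \leq \liminf_{N\to\infty} W_p(\mu^N_t,\nu^N_t) \leq \lim_{N\to\infty} W_p(\mu^N,\nu^N) = W_p(\mu,\nu).
\end{equation*}

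The main obstacle is the last upgrade: ensuring enough tail uniformity so that $\mu^N_t \to \mathcal{T}_t\mu$ genuinely in $W_p$ rather than merely weakly, in particular for $p = \infty$ where one needs uniform control of quantiles out to the endpoints. Everything else is largely a bookkeeping exercise on top of the discrete contraction and the convergence theorem; the key non-trivial input, namely the comparison principle, has already been used to produce both of these ingredients.
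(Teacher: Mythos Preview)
Your approach is essentially the paper's: discretize $\mu,\nu$ by empirical measures, apply the discrete $L^p$ contraction of $T^N_t$, and pass to the limit using the convergence theorem together with lower semicontinuity of $W_p$ under weak convergence. The paper does exactly this in four lines, invoking Theorem~\ref{convergence} (and \citep{chan,rogers}) for the weak convergence of $T^N_t x_N$ to $\mathcal{T}_t\mu$ and then the sentence ``since the Wasserstein distances are metrics for the weak convergence of measures'' to conclude.

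Two minor remarks. First, your paragraph on ``upgrading weak convergence to $W_p$ convergence'' of $\mu^N_t$ is not needed: as you yourself write in the final display, lower semicontinuity of $W_p$ under weak convergence already gives $W_p(\mathcal{T}_t\mu,\mathcal{T}_t\nu)\le\liminf W_p(\mu^N_t,\nu^N_t)$, and the only place where genuine $W_p$ convergence is required is at $t=0$, which your quantile construction delivers directly. Second, you are right to flag that Theorem~\ref{convergence} is stated for the stochastic system while $T^N_t$ is the deterministic flow; the paper silently applies it to $T^N_t$, and your observation that the proof carries over (the It\^o terms simply vanish) is the honest way to bridge this.
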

\begin{proof}
Let $µ, \nu \in \mathcal{P}(\mathbb{R})$. For all $N \geq 1$, there exist $x_N, y_N \in D^N$ such that the associated empirical mean measures converges toward $µ$ and $\nu$. From the previous result, we know that for any $N\geq 1, t\geq 0, p \in [1,\infty]$
\begin{equation}
\|T^N_t x_N - T^N_t y_N\|_p \leq \|x_N - y_N\|_p.
\end{equation}
From theorem \ref{convergence} and \citep{chan,rogers}, we know that $(T^N_t x_N)_{N \geq 1}$ and $(T^N_t y_N)_{N \geq 1}$ weakly converge toward respectively $\mathcal{T}_t µ$ and $\mathcal{T}_t \nu$. Since the Wasserstein distances are metrics for the weak convergence of measures, we finally obtain that
\begin{equation}
\mathcal{W}_p(\mathcal{T}_t µ, \mathcal{T}_t \nu) \leq \mathcal{W}_p(µ, \nu).
\end{equation}
\end{proof}
\begin{Rem}
We believe it is worth mentioning that, at least formally, the previous result could have been established by looking at the PDE satisfied by the inverse cumulative distribution function $G$ defined for all $t$ by $G(t) = (F(t))^{-1}$. Indeed one can show that the $L^p$ distance between two inverse repartition functions associated to two measures is the $p$ Wasserstein distance between those two measures (it is the analogue of (\ref{lpwp}) for general measures). Moreover if $F$ satisfies (\ref{teq}) then $G$ solves formally
\begin{equation}
\partial_t G(t,x) = \int_{\mathbb{R}}\frac{1}{G(t,x) - G(t,y)}dy \text{ in } (0,\infty)\times \mathbb{R}.
\end{equation}
However studying directly this equation seems more difficult to justify, because of the singularity of $G$, than the passage to the limit we just presented.
\end{Rem}

\subsection{A comment on the technical difficulties arising from the free Fokker-Planck equation}
In this section, we insist on what we believe is the most challenging aspect of such systems and on why the theory of viscosity solutions is helpful in this context.

The main difficulty to study systems such as the Dyson Brownian motion is the singular nature of the interaction. Here the interaction between two particles becomes singular as the particles get closer and closer. In particular the interaction is not defined at range $0$ and this is clearly linked to the famous problem of the self interaction of an electron in Physics. When the measure describing the repartition of the particles is smooth (when it has a smooth density with respect to the Lebesgue measure for instance), the interactions between particles is well understood in the sense of principal value. However, this is not the case for general distribution of particles. In our opinion, this indeterminacy is best highlighted with the following example. Let us consider equation (\ref{ffp}) with initial condition $\delta_0$, the dirac mass at $0$. There are least two "natural" solutions $µ_1$ and $µ_2$ for this equation. The first one is given by $µ_1(t) = \delta_0$ for all time. It corresponds to the situation in which we model a sole particle, initially placed at $0$, which indeed creates a field given by $H[\delta_0]$ but is not affected by it since it does not interact with itself. The second solution $µ_2$ has a density which we still denote $µ_2$ and is given by
\begin{equation}
µ_2(t,x) = \frac{2}{\pi t}\sqrt{t - x^2}\mathbb{1}_{x\in [-\sqrt{t},\sqrt{t}]}, \text{ for } (t,x) \in (0,\infty)\times\mathbb{R}.
\end{equation}
This is the semi-circular law and it models the fact that particles which were initially concentrated around $0$ are going to spread because they are repelling each other. In the case of $µ_1$ it seems that no interaction occurs because there is no self interaction whereas it is clearly the case in the second example.

When looking at the microscopic model from which equation (\ref{ffp}) arises (in the application we are interested in), it is clear that the second option is more natural. Indeed for a finite number of particles, each particles are subject to independent brownian motions which also has the effect to spread the particles instantly, and thus prevent situations such as $µ_1$ to happen in the limit of a large number of particles. In some sense, a contribution of this paper is to prove that the theory of viscosity solutions allows us to choose the correct solution of equation (\ref{ffp}). We leave as an exercise to the interested reader that the spatial primitive of $µ_2$ is indeed a viscosity solution of (\ref{teq}) whereas it is not the case for $µ_1$.

\section{The Wishart case}
We now present the extensions of the previous part to the case of the limit of Wishart processes. 
\subsection{The model}
The model we are interested in, in the present section, can be introduced at the discrete level as the following. Let $(A_t)_{t \geq 0}$ be a $n\times m$ matrix valued process such that all its coefficients are independent real Brownian motions on a filtered probability space $(\Omega, \mathcal{A},\mathcal{F},\mathbb{P})$. We consider the $n\times n $ matrix valued process $(X_t)_{t \geq 0}$ defined by 
\begin{equation}\label{aat}
X_t = \frac{1}{n}A_t A_t^T,
\end{equation}
where $A^T$ is the transpose of the matrix $A$. We shall assume in the rest of this section that $m \geq n$. Just as in the Dyson case, the eigenvalues $(\lambda^i_t)_{1\leq i \leq n}$ of the process $(X_t)_{t\geq 0}$ satisfy a system of SDE. To our knowledge this system has first been derived in \citep{bru1991wishart} and it is the following.
\begin{equation}\label{mp}
d\lambda^i_t = \left( \frac{m}{n} + \frac{1}{n} \sum_{j \ne i}\frac{\lambda^i_t + \lambda^j_t}{\lambda^i_t - \lambda^j_t}\right)dt + \frac{2}{n}\sqrt{\lambda^i_t}dB^i_t, \forall 1\leq i \leq n,
\end{equation}
where $(B^i)_{1\leq i \leq n}$ is a collection of independent Brownian motions on $(\Omega, \mathcal{A}, \mathcal{F}, \mathbb{P})$. This system of SDE is the analogue of (\ref{dyson}). Clearly the $(\lambda^i)_{1\leq i \leq n}$ are expected to be positive and let us observe that this is the case for the solutions of the previous system in the case $m\geq n$. In the mean field limit (i.e. when $n \to \infty$) we can also derive a mean field equation for the limit spectral measure, but the behavior of $m$ as $n \to \infty$ has to be prescribed. We assume in the rest of this section that there exists $c\geq 1$ such that
\begin{equation}
\frac{m}{n} \underset{n \to \infty}{\longrightarrow} c.
\end{equation}
\begin{Rem}
The case $c < 1$ can be treated by looking at $A_t^T A_t$ in \eqref{aat} instead of $A_t A_t^T$. More general assumptions (such as a non constant $c$) shall be the subject of future works by the authors.
\end{Rem}
The mean-field analogue of (\ref{mp}) is the following PDE
\begin{equation}\label{mpffp}
\partial_t µ + \partial_x\left(µ \mathcal{K}[µ(t)](x)\right) = 0 \text{ in } (0,\infty)^2,
\end{equation}
where the operator $\mathcal{K}$ is defined for smooth integrable functions $\phi$ by
\begin{equation}
\mathcal{K}[\phi](x) = c +  \int_{\mathbb{R}_+}\frac{x + y}{x -y}\phi(y)dy.
\end{equation}
This last integral is understood in the sense of principal value. There are two notable differences with the Dyson case here. The first one is that there is a boundary condition at $x=0$, the eigenvalues being forced to be positive. The second one is that the non local part in $\mathcal{K}$ is not a convolution, but only a kernel operator. In terms of interactions between particles, this means that the interactions between particles depends on where the particles are. Following the same ideas as the ones of the Dyson case, we are interested in the primitive equation of (\ref{mpffp}). This equation is given by
\begin{equation}\label{mpteq}
\partial_t F + (\partial_x F) \tilde{\mathcal{K}}[F] = 0 \text{ in } (0,\infty)^2,
\end{equation}
where $\tilde{\mathcal{K}}$ is the operator defined on smooth functions with finite limit at $+ \infty$ by
\begin{equation}
\tilde{\mathcal{K}}[\phi](x) = c - \phi(+\infty) - \phi(0) + 2x\int_{\mathbb{R}_+}\frac{\phi(x) - \phi(y)}{(x-y)^2} dy.
\end{equation}
The integral is understood in the sense of principal values (when $x>0$). Several comments can be made on this operator. The first one is that the term $- \phi(+\infty) - \phi(0)$, however uncommon, does not raise any difficulty a priori as we mainly intend to evaluate $\tilde{\mathcal{K}}$ on functions $F$, which are all supposed to satisfy $F(0) = 0$ and $F(+ \infty) = 1$. Thus, the constant terms in $\tilde{\mathcal{K}}$ should be equal to $c-1 > 0$, which justifies the fact that a priori, no boundary conditions is needed at $x=0$. Moreover, the last term of $\tilde{\mathcal{K}}$ is simply the same term as $\tilde{H}$ that we studied in the previous part except for the facts that it is multiplied by $2x$ and that the integral is taken only over $\mathbb{R}_+$. This term is well defined, even for $x = 0$ as we shall see in the next section. Finally, $\tilde{\mathcal{K}}$ satisfies the ellipticity condition : for smooth functions $\phi$ and $\psi$ in the domain of $\tilde{\mathcal{K}}$ such that $\phi \leq \psi$ and $\phi(x_0) = \psi(x_0)$ for some $x_0 \in \mathbb{R}_+$, then
\begin{equation}
\tilde{\mathcal{K}}[\phi](x_0) \leq \tilde{\mathcal{K}}[\psi](x_0).
\end{equation}

\subsection{The boundary at $x=0$}

From a mathematical point of view, an important feature of the Wishart case, compared to the Dyson case, is the presence of a boundary condition at $x=0$. We shall not provide detailed proofs concerning the Wishart case and simply refer to the next part, where proofs are given in a more general framework, except for the fact that the next section is set in the whole $\mathbb{R}$. Since we adopt this strategy of presentation, we explain in this section why this boundary does not raise any particular difficulties.\\

First let us state that the integral term in $\tilde{\mathcal{K}}$ is well defined for $x=0$. Indeed, given a smooth function $\phi$ in the domain of $\tilde{\mathcal{K}}$, for any $x> 0$, the integral is well defined in the sense of principal values and
\begin{equation}\label{k0}
\lim_{x \to 0^+} x\int_{\mathbb{R}_+}\frac{\phi(x) - \phi(y)}{(x-y)^2} dy = 0.
\end{equation}

Secondly, let us mention that because we are in the case $c\geq1$, there is no accumulation of mass near $x= 0$. More precisely, it can easily be shown that solutions $F$ of (\ref{mpteq}) are bounded from above by a continuous function $S$ such that $S(t,0) = 0$ for all $t\geq 0$.\\

Finally let us mention that, should we have place ourselves in the case $c<1$, the situation would have been entirely different. Indeed in this latter case, an accumulation of eigenvalues at $0$ is expected. This can be easily understood by looking at the rank of the matrix $X_t$ in this case. To illustrate this phenomena, let us recall the Marcenko-Pastur distributions, which are stationary states of (\ref{mpffp}), when one adds a suitable confinement potential. They are parametrized by $\sigma$ and depend on $c$. They are given by
\begin{equation}
µ_{\sigma,c}(x) = \frac{c\sqrt{(\lambda_+ -x)(x-\lambda_-)}}{2\pi\sigma^2 x}\mathbb{1}_{[\lambda_-,\lambda_+]}(x),
\end{equation} 
in the case $c \geq  1$, and by
\begin{equation}
µ_{\sigma,c} = (1-c)\delta_0 + \frac{c\sqrt{(\lambda_+ -x)(x-\lambda_-)}}{2\pi\sigma^2 x}\mathbb{1}_{[0,\lambda_+]}(x)dx,
\end{equation}
in the case $c<1$, where $\lambda_{\pm} = \sigma^2(1 \pm \sqrt{c^{-1}})^2$.

\subsection{Comparison principle and viscosity solutions}
In this section we present a comparison principle for viscosity solutions of
\begin{equation}\label{mpteq}
\partial_t F + (\partial_x F)_+ \tilde{\mathcal{K}}[F] = 0 \text{ in } (0,\infty)^2.
\end{equation}
As it was already the case in the previous section, the positive part in the previous equation can be removed when one is concerned with non decreasing solutions of the space variable (which is the case when one is studying the spectrum of large random matrices). We recall the definitions of viscosity sub and supersolutions :
\begin{Def}
\begin{itemize}
\item  An usc function $F$ is said to be a viscosity subsolution  of (\ref{mpteq}) if for any smooth function $\phi \in \mathcal{C}^{1,1}_b$ with limits at $0$ and $+ \infty$, $(t_0,x_0) \in (0,\infty)^2$ point of strict maximum of $F - \phi$ the following holds
\begin{equation}
\partial_t \phi(t_0,x_0) + (\partial_x \phi(t_0,x_0))_+\tilde{\mathcal{K}}[\phi(t_0)](x_0) \leq 0.
\end{equation}
\item A lsc function $F$ is said to be a viscosity supersolution  of (\ref{mpteq}) if for any smooth function $\phi \in \mathcal{C}^{1,1}_b$ with limits at $0$ and $+ \infty$, $(t_0,x_0) \in (0,\infty)^2$ point of strict minimum of $F - \phi$ the following holds
\begin{equation}
\partial_t \phi(t_0,x_0) + (\partial_x \phi(t_0,x_0))_+\tilde{\mathcal{K}}[\phi(t_0)](x_0) \geq 0.
\end{equation}
\item A viscosity solution $F$ of \eqref{mpteq} is an usc viscosity subsolution such that $F_*$ is a supersolution where $F_*(t,x) = \underset{0 \leq s \to t,y\to x}{\liminf}F(s,y)$.
\end{itemize}
\end{Def}

\begin{Rem}
Let us remark that because (\ref{k0}) holds, we could have allowed a slightly more sophisticated definition of viscosity solutions by allowing the point of maximum to be such that $x_0 = 0$.
\end{Rem}
The following holds.
\begin{Prop}
Let $F_1$and $F_2$ be respectively viscosity subsolution and supersolution of (\ref{mpteq}). Let us assume that for any $t>0$, $(F_1(s))_{0\leq s\leq t}$ and $(F_2(s))_{0\leq s\leq t}$ are such that they converge uniformly in $s$ toward $0$ when $x\to 0$ and uniformly in $s$ toward $1$ when $x\to \infty$. Let us also assume that $F_1(0) \leq F_2(0)$, then for all $t\geq 0$, $F_1(t) \leq F_2(t)$.
\end{Prop}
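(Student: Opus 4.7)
The plan is to argue by contradiction, adapting the doubling-of-variables machinery standard in non-local viscosity theory. Assume $M := \sup_{[0,T]\times \mathbb{R}_+}(F_1-F_2) > 0$ for some $T>0$. The uniform convergence assumptions on $F_1,F_2$ at $x=0$ and $x=+\infty$ force the difference $F_1-F_2$ to vanish uniformly in $t\in[0,T]$ at both ends of $\mathbb{R}_+$, so this positive supremum is attained on a compact slab $[0,T]\times[a,b]$ with $0<a<b<+\infty$. The initial condition $F_1(0)\leq F_2(0)$ rules out $t=0$ as a maximizer.

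For small parameters $\epsilon,\eta,\delta>0$, introduce the penalized auxiliary
\begin{equation*}
\Phi(t,s,x,y) = F_1(t,x) - F_2(s,y) - \frac{|x-y|^2}{2\epsilon} - \frac{(t-s)^2}{2\eta} - \frac{\delta}{T-t}
\end{equation*}
on $[0,T)^2\times[a,b]^2$. Classical estimates locate an interior maximum $(\hat t,\hat s,\hat x,\hat y)$ with $|\hat x-\hat y|+|\hat t-\hat s|\to 0$ and $|\hat x-\hat y|^2/\epsilon\to 0$ as $\epsilon,\eta\to 0$. The case $\hat x\leq\hat y$ is disposed of immediately, since the positive-part convention in the viscosity definition kills both non-local terms and leaves $\delta/(T-\hat t)^2\leq 0$. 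In the remaining case $\hat x>\hat y$, one extracts test functions $\phi_1,\phi_2\in\mathcal{C}^{1,1}_b$ with the prescribed limits at $0$ and $+\infty$ by gluing the quadratic penalization near the respective touching point to globally smoothed versions of $F_1,F_2$ outside a small ball $B_r$. The non-local operator $\tilde{\mathcal{K}}$ is then handled by the now-standard decomposition of Barles--Imbert type: the near-part of the integral, on $B_r$, uses the test function, while the far-part uses the actual sub/supersolution.

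Subtracting the two viscosity inequalities and factoring out the common spatial derivative $(\hat x-\hat y)/\epsilon>0$ yields
\begin{equation*}
\frac{\delta}{(T-\hat t)^2} \leq \frac{\hat x-\hat y}{\epsilon}\bigl(\tilde{\mathcal{K}}[\phi_2(\hat s)](\hat y) - \tilde{\mathcal{K}}[\phi_1(\hat t)](\hat x)\bigr).
\end{equation*}
The main obstacle is controlling the right-hand side, which compares two non-local quantities at two distinct points evaluated on two distinct test functions. On the near-part $B_r$, the ellipticity of $\tilde{\mathcal{K}}$ and the fact that $\phi_2-\phi_1$ reduces to a pure quadratic near $(\hat x,\hat y)$ bound the contribution by $C|\hat x-\hat y|^2/\epsilon$, which vanishes. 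On the far-part, $\phi_2-\phi_1\leq F_2-F_1+M+o(1)\leq o(1)$ modulo a small error, and, the prefactor $2x$ being bounded on $[a,b]$, this part also vanishes as $r\to 0$ after $\epsilon\to 0$. Sending $\eta\to 0$ and $\delta\to 0$ contradicts $M>0$, whence $F_1\leq F_2$. The argument is completely parallel to the continuous Dyson comparison principle sketched earlier, the new wrinkles being the boundary behavior at $x=0$ (absorbed by the uniform convergence assumption) and the extra factor $2x$ in $\tilde{\mathcal{K}}$ (harmless on the compact slab).
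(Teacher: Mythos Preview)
Your argument differs from the paper's.  The paper does not give a direct doubling-of-variables proof here; it defers to the general framework of Proposition~\ref{compg}, whose proof is \emph{indirect}: one first establishes comparison when one of the two functions is Lipschitz in space (Lemma~\ref{complip}), then proves existence of Lipschitz viscosity solutions for smooth initial data (Proposition~\ref{existlip}), and finally inserts such a Lipschitz solution between $F_1$ and $F_2+\epsilon$.  The reason for this detour is precisely that the direct argument you propose does not close when the kernel carries an $x$-dependent coefficient --- here the factor $2x$ in $\tilde{\mathcal{K}}$.

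The gap lies in your estimate of $\tilde{\mathcal{K}}[\phi_2(\hat s)](\hat y)-\tilde{\mathcal{K}}[\phi_1(\hat t)](\hat x)$.  After using the maximum inequality on the far part, a cross term of the form
\[
\frac{\hat x-\hat y}{\epsilon}\cdot 2(\hat y-\hat x)\int_{|z|>r}\frac{F_i(\cdot)-F_i(\cdot+z)}{z^2}\,dz
\]
survives; without Lipschitz control on $F_1$ or $F_2$ this is only bounded by $C(\hat x-\hat y)^2/(\epsilon r)$.  On the near part, the quadratic test functions give contributions $-r/\epsilon$ and $+r/\epsilon$ to the half-Laplacian integrals, and since the prefactors $2\hat x$ and $2\hat y$ \emph{add} rather than cancel, the near contribution is of order $(\hat x-\hat y)\,r/\epsilon^2$, not $(\hat x-\hat y)^2/\epsilon$ as you assert.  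For both pieces to vanish simultaneously one needs $(\hat x-\hat y)/\epsilon$ to stay bounded, whereas the standard penalisation only yields $(\hat x-\hat y)^2/\epsilon\to 0$; a Lipschitz bound on, say, $F_2$ is exactly what upgrades this to $|\hat x-\hat y|\le C\epsilon$ (see the last step of the proof of Lemma~\ref{complip} and Remark~\ref{remdysoncomp}).  Saying that ``$2x$ is bounded on $[a,b]$'' misses the point: the obstruction is the $x$-\emph{dependence} of the coefficient, which produces the cross term, not its size.
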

\begin{proof}
The proof of this statement is simply a mild adaptation of the proof of proposition \ref{compg}, which is given in full details. Hence we do not give this proof in its full length and only comment on the slight changes between the two statements. The main difference lies in the presence of the boundary condition and on the fact that the kernel is here unbounded (it is here simply given by $g(x,z) = x$, referring to the notations of the next section). The behavior we prescribe for $F_1$ and $F_2$ clearly prevent any difficulties which may come from this changes.
\end{proof}
Let us comment on the behavior we impose on $F_1$ and $F_2$ in the previous statement. Those assumptions are natural when having in mind that we are interested in studying the limit behavior of (\ref{mp}) when $N\to \infty$. Those two assumptions are then simply the fact that $F_1$ and $F_2$ represents counting functions of systems for which there is no aggregation at $0$ or loss of mass at infinity. Obviously more general statement could have been made as the next remark shows.
\begin{Rem}
The assumptions on the limits of $F_1$ and $F_2$ could have been replaced by $(F_1(s)-F_2(s))_{0\leq s\leq t}$ converges uniformly in $s$ toward negative limits at $x\to 0$ and $x \to \infty$.
\end{Rem}
As a consequence of the previous comparison principle, we can state the following result of uniqueness.
\begin{Theorem}
Given a non decreasing function $F_0$ such that $F_0(0_+) = 0$ and $F_0(+\infty) = 1$, there exists at most one viscosity solution of (\ref{mpteq}) such that $F(0) = F_0$.
\end{Theorem}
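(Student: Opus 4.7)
The plan is to derive the theorem as a direct corollary of the comparison principle just established, applied in both directions. Given two viscosity solutions $F_1$ and $F_2$ of \eqref{mpteq} with $F_1(0)=F_2(0)=F_0$, each one is simultaneously a viscosity subsolution and a viscosity supersolution. If one can check that both $F_1$ and $F_2$ enjoy the uniform boundary behavior required by the previous proposition, i.e.\ $F_i(s,x)\to 0$ as $x\to 0^+$ and $F_i(s,x)\to 1$ as $x\to +\infty$ uniformly on $s\in[0,t]$, then applying the comparison principle with $(F_1,F_2)$ yields $F_1\leq F_2$ on $[0,t]\times(0,\infty)$, and applying it with $(F_2,F_1)$ yields the reverse inequality. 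Sending $t$ to $+\infty$ gives $F_1=F_2$.

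The only real work is therefore to justify the uniform boundary behavior for any viscosity solution arising from an initial datum $F_0$ with $F_0(0_+)=0$ and $F_0(+\infty)=1$. First, I would verify that spatially constant functions taking values in $[0,1]$ are both viscosity sub- and supersolutions of \eqref{mpteq}: for a smooth test function $\phi$ touching a constant, $\partial_x\phi(t_0,x_0)=0$, so $(\partial_x\phi)_+=0$ kills the nonlocal contribution and only $\partial_t\phi(t_0,x_0)$ remains, which vanishes at a strict extremum of the difference with a constant. Combined with the ellipticity of $\tilde{\mathcal{K}}$ and the fact that any non-decreasing $F$ taking values in $[0,1]$ is sandwiched between the constants $0$ and $1$, this already pins the range of the solutions in $[0,1]$.

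To upgrade this to the uniform limits at $x=0$ and $x=+\infty$, the natural approach is a barrier argument. For $\varepsilon>0$, choose smooth non-decreasing functions $\underline{F}^\varepsilon,\overline{F}^\varepsilon$ of the single variable $x$ with $\underline{F}^\varepsilon\leq F_0\leq \overline{F}^\varepsilon$, with $\underline{F}^\varepsilon(0_+)=0$, $\overline{F}^\varepsilon(+\infty)=1$, and $\|\overline{F}^\varepsilon-\underline{F}^\varepsilon\|_\infty\leq\varepsilon$ outside a compact set $K_\varepsilon\subset(0,\infty)$. One then builds explicit sub/supersolutions moving these barriers by a finite-speed envelope (for instance of the form $\overline{F}^\varepsilon(x\pm Ct)$ with the transport coefficient $C$ majorizing $|\tilde{\mathcal{K}}[\overline{F}^\varepsilon]|$ on a strip away from $0$ and $\infty$), exploiting that $\tilde{\mathcal{K}}$ evaluated on such a smooth compactly-perturbed profile is bounded uniformly in $t$ on bounded $x$-intervals away from $0$ and that $\tilde{\mathcal{K}}$ vanishes at $x=0$ by \eqref{k0}. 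Applying the comparison principle against these barriers (in a regime where their boundary behavior is automatic because they are $x$-translates of time-independent profiles) squeezes $F_i(s,x)$ between barriers that are uniformly close to $0$ near $x=0$ and uniformly close to $1$ near $x=+\infty$, giving the desired uniform limits on $[0,t]$.

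The main obstacle is precisely this last step: constructing barriers that are bona fide sub- and supersolutions of \eqref{mpteq} with the right boundary behavior, while controlling the unbounded factor $2x$ appearing in $\tilde{\mathcal{K}}$. Once these barriers are in hand, the uniqueness reduces to a mechanical double application of the comparison principle. I would expect, in the full proof, to treat the boundary at $0$ via \eqref{k0} and the near-origin bound $F\leq S$ mentioned by the authors, and the boundary at $+\infty$ by comparing against a translated smooth cumulative profile majorizing $F_0$, for example a shifted error function interpolating between $0$ and $1$.
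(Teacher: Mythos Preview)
Your high-level strategy---apply the comparison principle twice---is of course the right endgame, but your route to the hypotheses of that proposition differs from the paper's, and the part you flag as ``the main obstacle'' is a genuine gap rather than a routine verification.

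The paper does not attempt to prove, for an arbitrary viscosity solution, the uniform-in-$s$ limits $F(s,x)\to 0$ as $x\to 0^+$ and $F(s,x)\to 1$ as $x\to +\infty$. Instead it refers to Theorem~\ref{uniqg}, whose proof goes through the Lipschitz-intermediary machinery of Section~4: one first proves a comparison principle when \emph{one} of the two functions is Lipschitz (Lemma~\ref{complip}), then produces, for smooth initial data, a Lipschitz viscosity solution (Proposition~\ref{existlip}). For general $F_0$, one squeezes a smooth approximation $F_{0,\epsilon}$ between $F_1(0)$ and $F_2(0)+\epsilon$, compares $F_1$ and $F_2$ against the Lipschitz solution $F_\epsilon$ started from $F_{0,\epsilon}$, and sends $\epsilon\to 0$ (Proposition~\ref{compg}). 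In the Wishart setting this has to be adapted to the half-line, but the point is that the boundary behavior is only ever needed for the Lipschitz intermediary $F_\epsilon$, where it is straightforward, not for the unknown solutions $F_1,F_2$.

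Your barrier approach, by contrast, tries to establish the uniform boundary limits directly for any viscosity solution, and the sketch you give does not close. A translated profile $\overline{F}^\varepsilon(x\pm Ct)$ cannot be a global sub- or supersolution of \eqref{mpteq} with a fixed speed $C$, because the transport coefficient $\tilde{\mathcal{K}}[\overline{F}^\varepsilon(\cdot\pm Ct)](x)$ contains the factor $2x$ and is therefore unbounded as $x\to+\infty$; on the support of $(\overline{F}^\varepsilon)'$ (which itself moves with $t$) there is no uniform bound on $|\tilde{\mathcal{K}}|$. Restricting to ``bounded $x$-intervals away from $0$'' does not help, since it is precisely the behavior at $x\to+\infty$ that you need to control. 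There is also a mild circularity: you want to invoke the Wishart comparison principle against the barriers, but that proposition already asks for the uniform boundary behavior of \emph{both} functions involved---so you would need to know it for $F_i$ before you can use the barriers to prove it for $F_i$. The remark following the proposition (replacing the individual limits by a sign condition on $F_1-F_2$) would let you escape this for the barrier side, but you would still need to build barriers that are honest sub/supersolutions, which your construction does not achieve because of the $2x$ factor.

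In short: the paper sidesteps your obstacle entirely by using Lipschitz approximants as go-betweens, rather than proving a priori boundary behavior for arbitrary solutions.
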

\begin{proof}
The proof of this result is a mild adaptation of the argument of the proof of theorem \ref{uniqg}, that we do not present here.
\end{proof}
\begin{Rem}
Just as it was true for the comparison principle, the previous result could easily be extended to more general boundary conditions than $0$ and $1$ for the respective limits at $0$ and $ + \infty$. Moreover, time dependent boundary conditions for (\ref{mpteq}) shall be the subject of a future work by the authors of this paper.
\end{Rem}
\subsection{Existence of viscosity solutions of the transport equation}
Although we mainly leave open the question of existence of viscosity solutions of (\ref{mpteq}), let us comment on this question. The argument we presented in the proof of theorem \ref{convergence} depends quite weakly on the nature of the interactions between the particles, except for the fact that it preserves a comparison principle. Thus this argument can easily be adapted to the present case. The main difficulty one would have to deal with in order to establish an existence result following the same idea would be to prove compactness results such as the ones of \citep{chan,rogers} which are not to our knowledge already known in the literature. Such results could be quite easily obtained by changing mildly the proofs of the Dyson case, but with some technical difficulties which we do not believe are helpful for the present paper, even though they may present an interest in themselves.

\section{A general framework}
In this last part we present a set of operators $\mathcal{L}$ for which we are able to prove uniqueness of viscosity solutions of the following non local transport equation
\begin{equation}\label{teqg}
\partial_t F + (\partial_x F) \mathcal{L}[F] = 0 \text{ in } (0,\infty)\times \mathbb{R},
\end{equation}
\begin{equation}
F(0) = F_0 \text{ in } \mathbb{R}.
\end{equation}
We shall focus in this section on operators of the form 
\begin{equation}
\mathcal{L}[\phi](x) = \int_{\mathbb{R}}\frac{g(x,z)(\phi(x) - \phi(x + z))}{z^2}dz,
\end{equation}
defined for smooth functions $\phi$ where $g$ is a function on which assumptions shall be made later on. Formally equation (\ref{teqg}) is linked with the mean field transport equation associated to a system of particles which interact as
\begin{equation}\label{generalsystem}
d\lambda^i_t = \frac{1}{N}\sum_{j\ne i}\frac{f(\lambda^i_t,\lambda^j_t)}{\lambda^i_t - \lambda^j_t} dt + \epsilon_N dB^i_t,
\end{equation}
where $\epsilon_N = o(1)$, $(B_t)_{t \geq 0}$ is a $N$ dimensional Brownian motion and $f$ and $g$ satisfy
\begin{equation}\label{gf}
\forall x,y \in \mathbb{R}, (x-y)\partial_y f(x,y) + f(x,y) = g(x,y-x).
\end{equation}
The Dyson case of course corresponds to $f \equiv 1$ and the Wishart case to $f(x,y) = x + y$. In general, several functions $f$ can be of interest. Let us derive such $f$ on an example that we believe to be quite instructive. Consider a $N\times N$ matrix valued diffusion process $(A_t)_{t \geq 0}$ which satisfies for $t\geq 0$
\begin{equation}\label{generaldiff}
dA_t = \sigma(A_t)dW_t\sigma(A_t),
\end{equation}
where $(W_t)_{t\geq 0}$ is a $N\times N$ Dyson Brownian motion, $\sigma : \mathbb{R} \to \mathbb{R}$ is a smooth function and $A_0$ is assumed to be a symmetric matrix. A perturbation theory computation yields that, if we denote by $(\lambda^i_t)_{1 \leq i \leq N}$ the ordered spectrum of $A_t$,
\begin{equation}
d\lambda^i_t = \sum_{j\ne i}\frac{\sigma(\lambda^i_t)\sigma(\lambda^j_t)}{\lambda^i_t - \lambda^j_t} dt + dB^i_t,
\end{equation}
where $(B_t)_{t \geq 0}$ is a $N$ dimensional Brownian motion. The following extension is also worth mentioning. By considering several functions $\sigma_k$, Dyson Brownian motions $W^k$ and the diffusion given by
\begin{equation}
dA_t = \sum_{k} \sigma_k(A_t)dW^k_t\sigma_k(A_t),
\end{equation} 
we obtain that the associated system of eigenvalues satisfies
\begin{equation}
d\lambda^i_t = \sum_{j\ne i}\frac{\sum_k \sigma_k(\lambda^i_t)\sigma_k(\lambda^j_t)}{\lambda^i_t - \lambda^j_t} dt + d\tilde{B}^i_t,
\end{equation}
where $(\tilde{B}_t)_{t \geq 0}$ is, up to a multiplicative constant, a $N$ dimensional Brownian motion. Hence general types of interactions naturally arise when one consider diffusion such as (\ref{generaldiff}), and this leads to a general class of non-local operator $\mathcal{L}$. Let us mention that in the context of free probabilities, (\ref{generaldiff}) can be interpreted as a diffusion with a non constant volatility and that in this situation, we naturally expect the cumulative distribution function of the process to satisfy (\ref{teqg}) with the appropriate operator $\mathcal{L}$.\\

Concerning the link between (\ref{generalsystem}) and (\ref{teqg}), we do not enter into much details and only mention that formally (\ref{teqg}) is the PDE satisfied by the spatial primitive of the limit when $N \to \infty$ of the sequence of empirical measure of solutions of (\ref{generalsystem}). The relation (\ref{gf}) implies in particular that $g(x,z)$ should satisfy
\begin{equation}\label{derg}
\partial_zg(x,0) = 0, \forall x \in \mathbb{R}.
\end{equation}
The natural condition under which the comparison principle should hold (for either (\ref{generalsystem}) or (\ref{teqg})) is that 
\begin{equation}\label{hypg}
\forall x,z \in \mathbb{R}, g(x,z) \geq 0.
\end{equation}
In the Dyson case, we simply had $g(x,z) = 1$ uniformly and $g(x,z) = 2x\mathbb{1}_{\{z \geq -x\}}$ in the Wishart  case (omitting the question of the boundary condition).
As we shall see, under an additional smoothness assumption on $g$, the comparison principle indeed holds under (\ref{hypg}). More surprisingly, we shall also see that well posedness of (\ref{teqg}) is merely a consequence of the positivity of $g$ along the $x$ axis (i.e. $g(x,0) \geq 0$), even though, in this more general situation, the comparison principle does not hold.\\

\subsection{Viscosity solutions of the primitive equation}
As we did in the previous cases, we introduce the parabolic equation 
\begin{equation}\label{teqg+}
\partial_t F + (\partial_x F)_+ \mathcal{L}[F] = 0 \text{ in } (0,\infty)\times \mathbb{R}.
\end{equation}
We start with the usual definitions of viscosity solutions.

\begin{Def}
\begin{itemize}
\item  An usc function $F$ is said to be a viscosity subsolution  of (\ref{teqg+}) if for any smooth function $\phi \in \mathcal{C}^{1,1}_b$, $(t_0,x_0) \in (0,\infty)\times \mathbb{R}$ point of strict maximum of $F - \phi$ the following holds
\begin{equation}
\partial_t \phi(t_0,x_0) + (\partial_x \phi(t_0,x_0))_+\mathcal{L}[\phi(t_0)](x_0) \leq 0.
\end{equation}
\item A lsc function $F$ is said to be a viscosity supersolution  of (\ref{teqg+}) if for any smooth function $\phi \in \mathcal{C}^{1,1}_b$, $(t_0,x_0) \in (0,\infty)\times \mathbb{R}$ point of strict minimum of $F - \phi$ the following holds
\begin{equation}
\partial_t \phi(t_0,x_0) + (\partial_x \phi(t_0,x_0))_+\mathcal{L}[\phi(t_0)](x_0) \geq 0.
\end{equation}
\item A viscosity solution $F$ of \eqref{teqg+} is an usc viscosity subsolution such that $F_*$ is a supersolution where $F_*(t,x) = \underset{0 \leq s \to t,y\to x}{\liminf}F(s,y)$.
\item By extension, a function $F$ such that for all $t\geq 0$, $F(t)$ is non decreasing, is a viscosity solution of \eqref{teqg} if it is a viscosity solution of \eqref{teqg+}.
\end{itemize}
\end{Def}

We also introduce an equivalent reformulation of the notion of sub and super solutions of (\ref{teqg+}). Such formulations are frequent in the literature on viscosity solutions, see \citep{awatif,barles2008second}. We introduce to operators $\mathcal{I}_{1,\delta}$ and $\mathcal{I}_{2,\delta}$ with the following
\begin{equation}
\mathcal{I}_{1,\delta}[\phi](x) = \int_{|z|\leq \delta}\frac{g(x,z)(\phi(x) - \phi(x + z))}{z^2}dz
\end{equation}
\begin{equation}
\mathcal{I}_{2,\delta}[\phi](x) = \int_{|z| > \delta}\frac{g(x,z)(\phi(x) - \phi(x + z))}{z^2}dz
\end{equation}

\begin{Prop}
Let $F$ be a subsolution (resp. supsolution) of (\ref{teqg+}). Then for all smooth functions $\phi$, $\delta > 0$ and $(t_0,x_0)\in (0,\infty)\times \mathbb{R}$ such that i) $(F - \phi)(t_0,x_0) = 0$, ii) $(F - \phi)(t,x) \leq 0$ (resp. $\geq 0$) for any $(t,x)\in B((t_0,x_0),\delta)$, the following holds
\begin{equation}
\begin{aligned}
\partial_t \phi(t_0,x_0) + (\partial_x \phi(t_0,x_0))_+( \mathcal{I}_{1,\delta}[\phi(t_0)](x_0)+\mathcal{I}_{2,\delta}[F(t_0)](x_0) ) \leq 0 \text{ (resp. } \geq 0 \text{)}.
 \end{aligned}
\end{equation}
\end{Prop}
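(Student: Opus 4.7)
The plan is to derive the split-form inequality by reducing it to the original viscosity subsolution definition via an approximation argument. The obstacle is that the definition requires a \emph{strict global} maximum of $F-\phi$ while we are given only a local (possibly non-strict) maximum on $B((t_0,x_0),\delta)$; moreover the definition uses the operator $\mathcal{L}$ applied to the smooth test function itself, not to $F$. I will produce a sequence $(\tilde\phi_n)$ of globally defined $\mathcal{C}^{1,1}_b$ test functions that agree with $\phi$ on $B((t_0,x_0),\delta)$, dominate $F$ globally, and decrease pointwise to $F$ on the complement of this ball.

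To construct the sequence, I exploit the upper semi-continuity and boundedness of $F$ to approximate it from above by a decreasing family of smooth functions, and then glue each approximation to $\phi$ across $\partial B((t_0,x_0),\delta)$. The gluing is possible because the hypothesis $F \leq \phi$ on the closed ball forces $\phi \geq F$ on the boundary. The resulting $\tilde\phi_n$ satisfies (i) $\tilde\phi_n \equiv \phi$ on $B((t_0,x_0),\delta)$, (ii) $\tilde\phi_n \geq F$ globally with equality at $(t_0,x_0)$, and (iii) $\tilde\phi_n \downarrow F$ pointwise on the complement of the ball. To turn $(t_0,x_0)$ into a strict global maximum of $F-\tilde\phi_n$, I further perturb by $\epsilon\psi$, where $\psi \in \mathcal{C}^{1,1}_b$ is non-negative, vanishes to second order at $(t_0,x_0)$, and is strictly positive elsewhere.

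Applying the subsolution definition at $\tilde\phi_n + \epsilon\psi$, and using that the first-order derivatives of $\epsilon\psi$ vanish at $(t_0,x_0)$ while $\tilde\phi_n \equiv \phi$ on $B((t_0,x_0),\delta)$, I obtain
\begin{equation*}
\partial_t \phi(t_0,x_0) + (\partial_x \phi(t_0,x_0))_+ \bigl(\mathcal{L}[\tilde\phi_n(t_0)](x_0) + \epsilon\, \mathcal{L}[\psi(t_0)](x_0)\bigr) \leq 0,
\end{equation*}
where $\mathcal{L}[\psi(t_0)](x_0)$ is finite by the second-order vanishing of $\psi$ at $(t_0,x_0)$. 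Splitting $\mathcal{L}[\tilde\phi_n(t_0)](x_0) = \mathcal{I}_{1,\delta}[\phi(t_0)](x_0) + \mathcal{I}_{2,\delta}[\tilde\phi_n(t_0)](x_0)$ by (i), letting $\epsilon \to 0$, and passing $n \to \infty$ by dominated convergence (valid because of positivity of $g$, boundedness of $F$ and $\tilde\phi_n$, and integrability of $z\mapsto g(x_0,z)/z^2$ on $\{|z|>\delta\}$) turns $\mathcal{I}_{2,\delta}[\tilde\phi_n(t_0)](x_0)$ into $\mathcal{I}_{2,\delta}[F(t_0)](x_0)$ and yields the announced inequality.

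The hard part is the explicit construction of $(\tilde\phi_n)$: one must approximate the merely usc $F$ from above by smooth functions while enforcing exact agreement with $\phi$ on $B((t_0,x_0),\delta)$ and a smooth transition across the boundary. Once this standard mollification-with-gluing is in hand, the rest of the argument is routine linearity and monotone/dominated convergence. The supersolution case is dual: approximate the lsc $F$ from below by $\tilde\phi_n \leq F$ that coincide with $\phi$ on the ball, perturb by $-\epsilon\psi$, and reverse every inequality in the same scheme.
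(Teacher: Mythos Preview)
Your approach is correct and is precisely the standard argument that the paper is invoking: the paper does not give its own proof but simply states that ``the proof of the previous statement is an immediate adaptation of a result in \citep{awatif} that we do not detail here.'' Your outline---approximate the usc function $F$ from above by smooth $\tilde\phi_n$ that coincide with $\phi$ on $B((t_0,x_0),\delta)$, perturb by $\epsilon\psi$ to force a strict global maximum, apply the definition, split $\mathcal{L} = \mathcal{I}_{1,\delta} + \mathcal{I}_{2,\delta}$, and pass to the limits $\epsilon\to 0$, $n\to\infty$ via dominated convergence---is exactly that adaptation.

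Two minor remarks. First, positivity of $g$ is not what drives the dominated convergence step; the relevant facts are the uniform boundedness of the $\tilde\phi_n$ and the integrability of $z\mapsto g(x_0,z)/z^2$ on $\{|z|>\delta\}$ (implicit in $\mathcal{L}$ being well defined on $\mathcal{C}^{1,1}_b$). Second, the gluing of $\tilde\phi_n$ across $\partial B((t_0,x_0),\delta)$ while keeping $\tilde\phi_n\in\mathcal{C}^{1,1}_b$ and $\tilde\phi_n\geq F$ globally is indeed the only place requiring care; one clean way is to take smooth $\psi_n\downarrow F$ with $\psi_n\geq F$, then set $\tilde\phi_n = \phi$ on a slightly smaller ball and interpolate to $\max(\phi,\psi_n)$ (mollified) on an annulus, using that $\phi\geq F$ on the full ball to guarantee $\tilde\phi_n\geq F$ in the transition zone. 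This is routine and you correctly flag it as the one technical point.
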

The proof of the previous statement is an immediate adaptation of a result in \citep{awatif} that we do not detail here.
\subsection{A comparison principle for Lipschitz solutions}
To establish a comparison principle, we assume the following on $g$.
\begin{itemize}
\item $g(x,z)\geq 0$ on $\mathbb{R}^2$.
\item There exists $C,\alpha_0> 0$ such that 
\begin{equation}\label{req1}
|g(x,z) - g(y,z)| \leq C |x-y|, \text{ for } x,y,z \in \mathbb{R}.
\end{equation}
\begin{equation}\label{req2}
C^{-1} \leq g \leq C \text{ on } \mathbb{R}\times(-\alpha_0,\alpha_0).
\end{equation}
\begin{equation}\label{req3}
\left| \frac{\partial_x g(x,z)}{g(x,z)} - \frac{\partial_xg(x,0)}{g(x,0)}\right| \leq C|z| \text{ on } \mathbb{R}\times(-\alpha_0,\alpha_0).
\end{equation}
\end{itemize}
In particular let us remark that we do not assume that (\ref{derg}) holds. We now prove a comparison principle when one of the two functions is Lipschitz in space. We shall provide a general comparison principle later on.
\begin{Lemma}\label{complip}
Assume that $F_1$ and $F_2$ are respectively viscosity subsolution and supersolution of (\ref{teqg+}) such that one of them is Lipschitz continuous in space, locally uniformly in time. If $F_1(0) \leq F_2(0)$, then for all time $F_1(t) \leq F_2(t)$.
\end{Lemma}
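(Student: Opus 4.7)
The plan is to argue by contradiction using the standard nonlocal doubling-of-variables technique (cf.\ \citep{awatif,barles2008second}), exploiting the Lipschitz hypothesis to obtain the quadratic closeness of the dualized variables needed to tame the kernel discrepancy arising from (\ref{req1}). Without loss of generality assume $F_2$ is $L$-Lipschitz in $x$ locally uniformly in $t$, and suppose for contradiction that $\sup_{[0,T]\times \mathbb{R}}(F_1 - F_2) > 0$ for some $T > 0$. Introduce, for small $\epsilon, \alpha, \lambda > 0$,
\[
\Phi(t,x,s,y) = F_1(t,x) - F_2(s,y) - \frac{(x-y)^2 + (t-s)^2}{\epsilon^2} - \alpha(x^2 + y^2) - \lambda t,
\]
where the $\alpha$ term enforces coercivity (so that $\sup \Phi$ is attained at some $(\bar t, \bar x, \bar s, \bar y)$) and the $-\lambda t$ provides the strict gap needed for the contradiction.

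The essential gain from the Lipschitz hypothesis is the estimate $|\bar x - \bar y| = O(\epsilon^2)$: comparing $\Phi(\bar t, \bar x, \bar s, \bar y) \geq \Phi(\bar t, \bar x, \bar s, \bar x)$ and using Lipschitz continuity of $F_2$ yields $(\bar x - \bar y)^2/\epsilon^2 \leq L|\bar x - \bar y| + O(\sqrt{\alpha}\,|\bar x - \bar y|)$, absorbing the a priori bound $|\bar x|, |\bar y| = O(\alpha^{-1/2})$. Consequently $p := 2(\bar x - \bar y)/\epsilon^2 = O(1)$. I would then apply the viscosity sub-/super-solution inequalities in the equivalent split form from the Proposition preceding the statement, with the natural quadratic test functions induced by the doubling, and subtract them. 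Since the time derivatives $q := 2(\bar t - \bar s)/\epsilon^2$ cancel cleanly, this gives
\[
\lambda \leq (p - 2\alpha \bar y)_+ \bigl(\mathcal{I}_{2,\delta}[F_2(\bar s)](\bar y) - \mathcal{I}_{2,\delta}[F_1(\bar t)](\bar x)\bigr) + R(\epsilon, \alpha, \delta),
\]
where $R$ collects both the $\mathcal{I}_{1,\delta}$ contributions on the quadratic test functions and the small mismatch between $(p - 2\alpha\bar y)_+$ and $(p + 2\alpha\bar x)_+$; using (\ref{req2})--(\ref{req3}) and Taylor expansion of $g$ in $z$ near $0$, $R = O(\delta/\epsilon^2 + \sqrt{\alpha}/\delta)$.

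The main obstacle is to show that the bracketed nonlocal difference is $o(1)$. I would split it into a common-kernel piece (both integrals carrying the kernel $g(\bar x, \cdot)$) and a kernel-discrepancy piece proportional to $g(\bar x, z) - g(\bar y, z)$. The former exploits the crucial invariance of the penalty $(x-y)^2/\epsilon^2$ under translations $(x,y) \mapsto (x+z, y+z)$: maximality of $\Phi$ yields
\[
F_1(\bar t, \bar x + z) - F_2(\bar s, \bar y + z) \leq F_1(\bar t, \bar x) - F_2(\bar s, \bar y) + 2\alpha\bigl(z(\bar x + \bar y) + z^2\bigr),
\]
which, combined with $g \geq 0$ and splitting $|z|$ at the threshold $z_* \sim \alpha^{-1/2}$ (beyond which the quadratic lower bound is overtaken by the uniform bound $|\Delta(z)| \leq 2\|F_1\|_\infty + 2\|F_2\|_\infty$), forces the common-kernel piece to be nonnegative modulo an error of order $O(\sqrt{\alpha} + \alpha \log(1/(\sqrt{\alpha}\delta)))$. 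The kernel-discrepancy piece is controlled using (\ref{req1}) and the Lipschitz increment bound $|F_2(\bar y + z) - F_2(\bar y)| \leq \min(L|z|, 2\|F_2\|_\infty)$, yielding a contribution of order $|\bar x - \bar y|\log(1/\delta) = O(\epsilon^2 \log(1/\delta))$.

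Choosing, for instance, $\delta = \epsilon^3$ and $\alpha = \epsilon^7$, each of $R$, the common-kernel error, and the kernel-discrepancy error vanishes as $\epsilon \to 0$; the surviving inequality is $\lambda \leq 0$, contradicting $\lambda > 0$. The delicate scale balance among $\alpha, \epsilon, \delta$ is the heart of the argument, made feasible only by the Lipschitz-induced closeness $|\bar x - \bar y| = O(\epsilon^2)$ together with the linear increment bound $|F_2(\bar y+z) - F_2(\bar y)| \leq L|z|$: without the Lipschitz hypothesis one has only $|\bar x - \bar y| = O(\epsilon)$ and the uniform increment bound, so the kernel-discrepancy contribution becomes $O(\epsilon/\delta)$ which, multiplied by $(p)_+ = O(1/\epsilon)$, gives $O(1/\delta)$ that fails to vanish as $\delta \to 0$, illustrating that the Lipschitz regularity is not a mere technical convenience but structurally required in this argument.
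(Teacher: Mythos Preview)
Your proposal is correct and follows essentially the same strategy as the paper's proof: contradiction via doubling of variables, the split $\mathcal{L}=\mathcal{I}_{1,\delta}+\mathcal{I}_{2,\delta}$, sign control of the common-kernel piece by maximality, and control of the kernel-discrepancy piece by combining (\ref{req1}) with the Lipschitz increment bound on $F_2$ to obtain the logarithmic-in-$\delta$ estimate. The two differences are purely technical. First, you add the coercivity penalty $\alpha(x^2+y^2)$ to guarantee that the supremum is attained, whereas the paper simply asserts the existence of a maximizer; your version is more careful but forces you to manage the extra error terms you identify. Second, you use the linear penalty $-\lambda t$ to generate the strict gap, while the paper uses the equivalent device of passing to the modified equation $\partial_t F+\lambda F+e^{-\lambda t}(\partial_x F)_+\mathcal{L}[F]=0$; the paper's route avoids the mismatch between $(p+2\alpha\bar x)_+$ and $(p-2\alpha\bar y)_+$ and makes the common-kernel piece exactly nonnegative, which is why their scale choice $\delta=\epsilon^2$ (in their normalization, corresponding to $\delta=\epsilon^4$ in yours) suffices without any $\alpha$-bookkeeping.
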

\begin{proof}
This, rather technical, proof is classical in the theory of viscosity solutions, see \citep{awatif,barles2008second}. It does not rely on particular new ideas and we mostly present it in details for the sake of completness. We assume without loss of generality that $F_2$ is Lipschitz continuous in space. We argue by contradiction and use the usual technique of doubling of variables. Let us assume that the result is false. Thus there exists $\alpha > 0$ and $T>0$ such that for any $\epsilon > 0$,
\begin{equation}\label{sup}
\sup \left\{F_1(t,x) - F_2(s,y) - \frac{1}{2\epsilon}(x-y)^2 - \frac{1}{2\epsilon}(t-s)^2 | t,s \in [0,T] , x,y \in \mathbb{R} \right\} > \alpha,
\end{equation}
Since $F_1$ and $F_2$ are bounded and respectively upper semi continuous and lower semi continuous functions, we can consider a point of maximum $(t^*,x^*,s^*,y^*)$ of (\ref{sup}). Without loss of generality, because we are interested in a time dependent problem, we can always assume that, for $\lambda > 0$ small enough, $F_1$ and $F_2$ are in fact sub and super solution of 
\begin{equation}\label{modifteqg}
\partial_t F + \lambda F + e^{-\lambda t} (\partial_x F)_+ \mathcal{L}[f] = 0.
\end{equation}
Because $F_1$ is a subsolution of (\ref{modifteqg}), we can use as a test function in the definition of subsolutions, the function $\phi$ defined by
\begin{equation}
\phi_1(t,x) = \frac{1}{2\epsilon}(x-y^*)^2 + \frac{1}{2\epsilon}(t-s^*)^2.
\end{equation}
By doing so, we obtain for any $\delta > 0$
\begin{equation}\label{visc1}
\begin{aligned}
\frac{1}{\epsilon}(t^* - s^*) + \lambda F_1(x^*) + e^{-\lambda t^*} \frac{1}{\epsilon}(x^* - y^*)_+(\mathcal{I}_{1,\delta}[\phi_1(t^*)](x^*) +\mathcal{I}_{2,\delta}[F_1(t^*)](x^*) ) \leq 0.
 \end{aligned}
\end{equation}
The analogue relation for $F_2$ is
\begin{equation}\label{visc2}
\begin{aligned}
\frac{1}{\epsilon}(t^* - s^*) +\lambda F_2(y^*) + e^{-\lambda s^*} \frac{1}{\epsilon}(x^* - y^*)_+( \mathcal{I}_{1,\delta}[\phi_2(s^*)](y^*)+\mathcal{I}_{2,\delta}[F_2(s^*)](y^*) ) \geq 0.
 \end{aligned}
\end{equation}
where $\phi_2$ is defined with
\begin{equation}
\phi_2(s,y) = -\frac{1}{2\epsilon}(x^*-y)^2 - \frac{1}{2\epsilon}(t^*-s)^2.
\end{equation}
Combining (\ref{visc1}) and (\ref{visc2}) yields
\begin{equation}
\begin{aligned}
&\lambda (F_1(x^*) - F_2(y^*)) + \frac{1}{\epsilon}(x^* - y^*)_+\left[ I_{1,\delta}[\phi_1](x^*) - I_{1,\delta}[\phi_2](y^*) \right] + \\
&+\frac{1}{\epsilon}(x^* - y^*)_+\left[ \mathcal{I}_{2,\delta}[F_1(t^*)](x^*) -\mathcal{I}_{2,\delta}[F_2(s^*)](y^*)  \right] \leq 0.
\end{aligned}
\end{equation}
From which we obtain
\begin{equation}
\begin{aligned}
&\lambda (F_1(x^*) - F_2(y^*)) + \frac{(x^*-y^*)_+}{\epsilon}\left[ I_{1,\delta}[\phi_1](x^*) - I_{1,\delta}[\phi_2](y^*) \right]+\\
&+\frac{(x^*-y^*)_+}{\epsilon}\left[ \int_{|z|> \delta} \frac{g(x^*,z)(F_1(t^*,x^*) - F_1(t^*,x^* + z) - F_2(s^*,y^*) + F_2(s^*,y^* + z))}{z^2} dz\right]+\\
& +\frac{(x^*-y^*)_+}{\epsilon}\int_{|z|> \delta} \frac{(g(x^*,z) - g(y^*,z))(F_2(s^*,y^*) - F_2(s^*,y^* +z))}{z^2} dz   \leq 0.
\end{aligned}
\end{equation}
By definition of $(x^*,y^*,s^*,t^*)$, since $g \geq 0$, we deduce that
\begin{equation}
\begin{aligned}
&\lambda (F_1(x^*) - F_2(y^*)) + \frac{(x^*-y^*)_+}{\epsilon}\left[ I_{1,\delta}[\phi_1](x^*) - I_{1,\delta}[\phi_2](y^*) \right]\\
& +\frac{(x^*-y^*)_+}{\epsilon}\int_{|z|> \delta} \frac{(g(x^*,z) - g(y^*,z))(F_2(s^*,y^*) - F_2(s^*,y^* +z))}{z^2} dz   \leq 0.
\end{aligned}
\end{equation}
From the uniform Lipschitz continuity of $g$ in the first variable, as well as the Lipschitz continuity of $F_2$ around $y^*$ and its global boundedness, we finally obtain that for some constant $C >0$
\begin{equation}
\begin{aligned}
&\lambda (F_1(x^*) - F_2(y^*)) + \frac{(x^*-y^*)_+}{\epsilon}\left[ I_{1,\delta}[\phi_1](x^*) - I_{1,\delta}[\phi_2](y^*) \right]\\
& +C\frac{(x^*-y^*)_+^2}{\epsilon}(1 + |\ln(\delta)|)   \leq 0.
\end{aligned}
\end{equation}
Since $\phi_1$ and $\phi_2$ are bounded in $\mathcal{C}^2$ by $\epsilon^{-1}$
\begin{equation}\label{lastineq}
\lambda (F_1(x^*) - F_2(y^*)) + 2\frac{(x^*-y^*)_+}{\epsilon}\frac{\delta}{\epsilon}  +C\frac{(x^*-y^*)_+^2}{\epsilon}(1 + |\ln(\delta)|)   \leq 0.
\end{equation}
Let us now remark that since $F_2$ is bounded, we deduce that $\epsilon^{-1}(x^*-y^*)_+$ is bounded uniformly in $\epsilon$, and thus that setting $\delta = \epsilon^2$ and letting $\epsilon \to 0$, we obtain that
\begin{equation}
\lambda \alpha \leq 0,
\end{equation}
where $\alpha, \lambda > 0$, which is thus a contradiction.

\end{proof}
\begin{Rem}\label{remdysoncomp}
Let us remark that in the Dyson case ($g = 1$), the previous proof is much simpler as no logarithmic term in $\delta$ appears and thus no Lipschitz assumption is needed.
\end{Rem}
An immediate corollary of this comparison principle is the following.
\begin{Cor}
Given an initial condition $F_0$, there exists at most one, locally in time, Lipschitz viscosity solution $F$ of (\ref{teqg+}) satisfying $F(0) = F_0$.
\end{Cor}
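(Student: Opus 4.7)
The plan is to derive this uniqueness statement as an essentially immediate consequence of the comparison principle of Lemma \ref{complip}. Suppose $F_1$ and $F_2$ are two viscosity solutions of \eqref{teqg+} with common initial datum $F_0$, each of which is Lipschitz in space, locally uniformly in time on $[0,T]\times\mathbb{R}$ for every $T>0$. By the definition of a viscosity solution, $F_i$ is an upper semicontinuous subsolution whose lower semicontinuous envelope $(F_i)_*$ is a supersolution; since Lipschitz continuity makes $F_i$ continuous, $F_i$ itself is both a viscosity subsolution and a viscosity supersolution.

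The first step is to apply Lemma \ref{complip} with $F_1$ playing the role of subsolution and $F_2$ playing the role of supersolution. Because at least one of the two (in fact both) is locally Lipschitz in space uniformly in time, the hypothesis of the lemma is satisfied. As $F_1(0) = F_0 = F_2(0)$, the comparison inequality $F_1(0) \leq F_2(0)$ holds trivially, so the lemma yields $F_1(t) \leq F_2(t)$ for all $t \geq 0$. The second step is to invoke the lemma again after swapping the roles: $F_2$ is now the subsolution and $F_1$ the supersolution, both assumptions still hold, and we conclude $F_2(t) \leq F_1(t)$ for all $t \geq 0$. Combining the two inequalities gives $F_1 \equiv F_2$ on $[0,\infty)\times\mathbb{R}$.

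Strictly speaking there is almost no obstacle here; the only small point to verify is that Lemma \ref{complip} has been stated in a form that applies to both orderings of the pair. Since the hypothesis requires only that \emph{one} of the two functions be Lipschitz in space (locally uniformly in time), and since in our setting both are, the two applications are symmetric and legitimate. Thus the corollary follows without any additional argument, and the full proof reduces to this double application of the comparison principle.
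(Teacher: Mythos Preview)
Your proof is correct and matches the paper's approach exactly: the paper states this corollary as an ``immediate corollary of this comparison principle'' (Lemma \ref{complip}) without further argument, and your double application of the lemma with the roles of sub- and supersolution swapped is precisely the standard way to extract uniqueness from comparison.
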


\subsection{Existence of viscosity solutions for smooth initial data}
In order to use the results of the previous section to prove a more general comparison principle, we need the following existence result for viscosity solutions of (\ref{teqg}) with smooth initial condition.
\begin{Prop}\label{existlip}
Assume that $F_0$ is a smooth non-decreasing bounded function. Then there exists a (unique) viscosity solution $F$ of (\ref{teqg}) satisfying $F(0) = F_0$ such that $F(t)$ is non decreasing and Lipschitz locally uniformly for $t\geq 0$.
\end{Prop}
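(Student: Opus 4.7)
The plan is to construct $F$ by regularizing the singular operator $\mathcal{L}$ and passing to the limit using the stability of viscosity solutions. First, I would introduce the family of regularized operators
\[
\mathcal{L}_\epsilon[\phi](x) = \int_{|z| \geq \epsilon} \frac{g(x,z)(\phi(x) - \phi(x+z))}{z^2}\, dz,
\]
possibly with an extra truncation at $|z|$ large to ensure boundedness. Each $\mathcal{L}_\epsilon$ is bounded and Lipschitz on $BUC(\mathbb{R})$, so the equation $\partial_t F_\epsilon + (\partial_x F_\epsilon)_+ \mathcal{L}_\epsilon[F_\epsilon] = 0$ with initial datum $F_0$ is a standard non-local Hamilton--Jacobi equation with continuous coefficients. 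Combining a fixed-point argument with the smoothness of $F_0$, one obtains a unique classical solution $F_\epsilon \in \mathcal{C}^1([0,T] \times \mathbb{R})$ for every $T > 0$, which is automatically a viscosity solution of the regularized equation.

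Second, I would establish three estimates uniform in $\epsilon$: an $L^\infty$ bound, monotonicity in $x$, and a locally uniform Lipschitz bound in $x$. The $L^\infty$ bound is immediate since the equation has no source term and $F_0$ is bounded. Monotonicity is preserved by the structure $(\partial_x F_\epsilon)_+ \mathcal{L}_\epsilon[F_\epsilon]$ together with $g \geq 0$: at a first hypothetical zero of $\partial_x F_\epsilon$ from above, the ellipticity of $\mathcal{L}_\epsilon$ at that point forces a favourable sign on $\partial_t\partial_x F_\epsilon$, preventing negativity. For the Lipschitz estimate I would differentiate the equation in $x$ to get an equation for $w = \partial_x F_\epsilon$ and evaluate at a maximum of $w$; the contribution of the ellipticity of $\mathcal{L}_\epsilon$ yields a favourable sign, leaving a remainder which, thanks to the Lipschitz regularity (\ref{req1}) of $g$ in its first argument and the $L^\infty$ bound on $F_\epsilon$, can be closed by a Gr\"onwall inequality and gives a Lipschitz constant depending only on $\|\partial_x F_0\|_\infty$, $\|F_0\|_\infty$, $T$, and the constants in (\ref{req1})--(\ref{req3}). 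Time equicontinuity follows immediately from the equation once $\mathcal{L}_\epsilon[F_\epsilon]$ is bounded, which it is once the spatial Lipschitz estimate holds.

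Third, by Arzel\`a--Ascoli the family $(F_\epsilon)_{\epsilon > 0}$ is pre-compact in $\mathcal{C}_{\text{loc}}([0,\infty) \times \mathbb{R})$, so along a subsequence $F_\epsilon \to F$ locally uniformly. The standard non-local viscosity stability theory of \citep{awatif,barles2008second} applies: for any test function $\phi \in \mathcal{C}^{1,1}_b$, $\mathcal{L}_\epsilon[\phi]$ converges pointwise to $\mathcal{L}[\phi]$ by the same argument used to show that $\mathcal{L}[\phi]$ is well defined, and the outer $(\cdot)_+$ factor is continuous in the variables it depends on. This identifies the limit $F$ as a viscosity solution of (\ref{teqg+}). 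The limit inherits the monotonicity and the locally uniform Lipschitz constant, $F(0) = F_0$ follows from the uniform convergence, and by the last item of the definition of viscosity solutions $F$ is a viscosity solution of (\ref{teqg}). Uniqueness in the Lipschitz class then comes from Lemma \ref{complip}.

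The main obstacle I expect is the uniform Lipschitz bound on $(F_\epsilon)$. Because $g$ depends on $x$, the operator $\mathcal{L}_\epsilon$ is not translation invariant, so one cannot simply compare $F_\epsilon(t, \cdot + h)$ with $F_\epsilon(t, \cdot)$ to get Lipschitz propagation for free; the Dyson and Wishart cases hide this difficulty because there $g$ is trivial in $x$ or has a very specific form. The remainder produced by the $x$-dependence of $g$ must be absorbed in a Gr\"onwall argument that relies on all three structural assumptions (\ref{req1})--(\ref{req3}), and once this step is in hand the rest of the construction follows from now-standard viscosity arguments.
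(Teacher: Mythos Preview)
Your overall strategy coincides with the paper's: regularize, derive a uniform Lipschitz estimate by differentiating in $x$, then pass to the limit by Arzel\`a--Ascoli and viscosity stability. You also correctly locate the only real difficulty in the $x$--dependence of $g$. The gap is in how the Gr\"onwall actually closes. When you differentiate \eqref{teqg} and sit at a spatial maximum of $w=\partial_x F$, beyond the favourable terms $w\,\mathcal L[w]\ge 0$ and $(\partial_x w)\,\mathcal L[F]=0$ there remains
\[
w(x)\int_{\mathbb R}\frac{\partial_x g(x,z)\bigl(F(t,x)-F(t,x+z)\bigr)}{z^2}\,dz,
\]
and this integral is \emph{not} controlled by $\|F\|_\infty$ together with \eqref{req1} alone: near $z=0$ it is exactly as singular as $\mathcal L[F]$ itself. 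The paper handles it in two steps that are absent from your outline. First, \eqref{req3} is used to split off the piece $\frac{\partial_x g(x,0)}{g(x,0)}\,\mathcal L[F](x)$; the leftover carries an extra factor $|z|$ and, after optimizing a cut-off $\delta\sim\|\partial_x F(t)\|_\infty^{-1}$, is bounded by $C\|F\|_\infty\bigl(1+|\ln\|\partial_x F(t)\|_\infty|\bigr)$. Second, the surviving term $w\,\mathcal L[F]$ is not estimated via $w$ at all: since $\partial_t F$ satisfies the linearization of \eqref{teqg} (which inherits the maximum principle because $\partial_x F\ge 0$ and $g\ge 0$), one has $\|\partial_t F(t)\|_\infty\le\|\partial_t F(0)\|_\infty$, hence $\|w\,\mathcal L[F]\|_\infty\le\|\partial_x F_0\,\mathcal L[F_0]\|_\infty$ uniformly in time. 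Only then does the differential inequality close, and it does so as a \emph{logarithmic} Gr\"onwall, not a linear one. This is also why the statement asks for $F_0$ smooth rather than merely Lipschitz: one needs $\partial_x F_0\,\mathcal L[F_0]\in L^\infty$ to launch the argument.

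A minor point: the paper's regularization is not the kernel truncation you propose but a semi-discretization of $\partial_x F$ combined with smoothings of $(\cdot)_\pm$ and of $z^{-2}$, chosen so that the regularized equation still enjoys exactly the ellipticity needed to rerun the a priori estimate verbatim. Your truncation would likely work too, but you would have to check that the two steps above survive uniformly in $\epsilon$.
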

\begin{proof}
This result can be obtained quite classically once some a priori Lipschitz estimate has been established for smooth solutions of  (\ref{teqg}). We focus in a first time on proving this estimate.\\

Let us remark that if $F$ is a smooth solution of (\ref{teqg}), then its derivative $µ$ is a solution of 
\begin{equation}\label{eqfp}
\partial_t µ + µ \mathcal{L}[µ] + \partial_x µ \mathcal{L}[F] + µ \int_{\mathbb{R}}\frac{\partial_xg(x,z)(F(t,x) - F(t,x+z))}{z^2}dz= 0 \text{ in } (0,\infty)\times \mathbb{R}.
\end{equation}
We now make some computations on the lest term of the left hand side.
\begin{equation}
\begin{aligned}
\int_{\mathbb{R}}\frac{\partial_xg(x,z)(F(t,x) - F(t,x+z))}{z^2}dz &= \int_{\mathbb{R}}\frac{\partial_xg(x,z)g(x,z)(F(t,x) - F(t,x+z))}{g(x,z)z^2}dz\\
=\int_{\mathbb{R}}\left(\frac{\partial_xg(x,z)}{g(x,z)} -\frac{\partial_xg(x,x)}{g(x,x)}\right) &\frac{g(x,z)(F(t,x) - F(t,x+z))}{z^2}dz + \frac{\partial_xg(x,x)}{g(x,x)}\mathcal{L}[F].
\end{aligned}
\end{equation}
We split the last integral into three terms, depending that $|z| \leq \delta$, $\delta < |z| \leq 1$ or $|z| > 1$ for some $\delta \in (0,\alpha_0)$. We then compute
\begin{equation}
\left|\int_{|z|\leq \delta}\left(\frac{\partial_xg(x,z)}{g(x,z)} -\frac{\partial_xg(x,x)}{g(x,x)}\right) \frac{g(x,z)(F(t,x) - F(t,x+z))}{z^2}dz \right|\leq C^2\|\partial_x F(t)\|_{\infty} \delta,
\end{equation}
\begin{equation}
\left| \int_{1 \geq |z| > \delta}\left(\frac{\partial_xg(x,z)}{g(x,z)} -\frac{\partial_xg(x,x)}{g(x,x)}\right) \frac{g(x,z)(F(t,x) - F(t,x+z))}{z^2}dz \right| \leq C\|F\|_{\infty}|\ln(\delta)|,
\end{equation}
\begin{equation}
\left| \int_{1 < |z| }\left(\frac{\partial_xg(x,z)}{g(x,z)} -\frac{\partial_xg(x,x)}{g(x,x)}\right) \frac{g(x,z)(F(t,x) - F(t,x+z))}{z^2}dz \right| \leq C\|F\|_{\infty},
\end{equation}
where $C$ is a constant given by the assumptions we made on $g$. By choosing $\delta$ as of order $\|\partial_x F(t)\|_{\infty}^{-1}$, we deduce that
\begin{equation}\label{estint}
\left|\int_{\mathbb{R}}\left(\frac{\partial_xg(x,z)}{g(x,z)} -\frac{\partial_xg(x,x)}{g(x,x)}\right) \frac{g(x,z)(F(t,x) - F(t,x+z))}{z^2}dz \right|\leq C\|F\|_{\infty}(1 + |\ln(\|\partial_x F(t)\|_{\infty})|).
\end{equation}
Let us now remark that since $F$ is a smooth solution of (\ref{teqg}), so is $\partial_tF$ and thus if $\|\partial_t F\|_{\infty}$ is finite, then the following holds :
\begin{equation}\label{estdt}
\forall t \geq 0, \|\partial_t F(t)\|_{\infty} \leq \|\partial_t F(0)\|_{\infty}.
\end{equation}
Thus we deduce from (\ref{teqg}) the following
\begin{equation}\label{estnl}
\forall t \geq 0, \|\partial_x F(t)\mathcal{L}[F(t)]\|_{\infty} \leq \|\partial_x F(0)\mathcal{L}[F_0]\|_{\infty}.
\end{equation}
Using the fact that $\frac{\partial_xg(x,0)}{g(x,0)}$ is uniformly bounded as well as (\ref{estint}) and (\ref{estnl}), we deduce from (\ref{eqfp}) that 
\begin{equation}
\frac{d}{dt}\|µ(t)\|_{\infty} \leq \|µ(t)\|_{\infty}C\|F\|_{\infty}(1 + |\ln(\|µ(t)\|_{\infty})|) + C \|\partial_x F_0\mathcal{L}[F_0]\|_{\infty}.
\end{equation}
From this last inequality, we deduce using a usual logarithmic version of Gr\"onwall's lemma that for any $t\geq 0$, there exists a constant $C_t$ depending only on $t,C$ and $\|\partial_x F_0\mathcal{L}[F_0]\|_{\infty}$ such that for any smooth solutions $F$ of (\ref{teqg})
\begin{equation}\label{lipest}
\forall s \in [0,t], \|\partial_x F(s)\|_{\infty} \leq C_t
\end{equation}

We now explain how such an estimate yields the existence of the viscosity solution in the statement of the proposition. Let us consider an approximation of the positive part $(\psi_{+,\epsilon})_{\epsilon> 0}$ which satisfies 
\begin{equation}
\begin{cases}
\psi_{+,\epsilon} \underset{\epsilon \to 0}{\longrightarrow} (\cdot)_+,\\
\forall \epsilon > 0, \psi_{+,\epsilon} \geq 0, \psi_{+,\epsilon} \in \mathcal{C}^{\infty},\\
0 \leq \psi_{+,\epsilon}' \leq 1,
\end{cases}
\end{equation}
and an analogous approximation of the negative part $(\psi_{-,\epsilon})_{\epsilon > 0}$. We also consider an approximation $(\rho_{\epsilon})_{\epsilon > 0}$ of $x \to x^{-2}$ which satisfies
\begin{equation}
\begin{cases}
\rho_{\epsilon} \underset{\epsilon \to 0}{\longrightarrow} \frac{1}{x^2} \text{ in the sense of distributions},\\
\forall \epsilon > 0, \rho_{\epsilon} \geq 0, \rho_{\epsilon} \in \mathcal{C}^{\infty}.
\end{cases}
\end{equation}
We define the operator $\mathcal{L}_{\epsilon}$ by
\begin{equation}
\mathcal{L}_{\epsilon}[\phi](x) = \int_{\mathbb{R}}g(x,z)\rho_{\epsilon}(z)(\phi(x) - \phi(x + z))dz. 
\end{equation}
Let us now consider $h,\epsilon, \delta > 0$ and the following equation
\begin{equation}\label{regteqg}
\begin{aligned}
\partial_t F + &\left(\frac{F(t,x) - F(t,x-h)}{h}\right) \psi_{+,\epsilon}(\mathcal{L}_{\delta}[F]) -\\
&- \left(\frac{F(t,x+h) - F(t,x)}{h}\right) \psi_{-,\epsilon}(\mathcal{L}_{\delta}[F]) = 0.
\end{aligned}
\end{equation}
This sort of semi discretization and regularization of (\ref{teqg}) has several key properties. First it clearly propagates the regularity of any initial condition as all the terms except $\partial_t F$ are smooth. Hence existence of solution of (\ref{regteqg}) associated to a smooth initial condition is true. Moreover, since (\ref{regteqg}) preserves the ellipticity properties of (\ref{teqg}), the estimate (\ref{lipest}) still holds for solution of (\ref{regteqg}). The proof of this fact is a direct application of the argument of the first part of this proof, namely it is useful to remark that to establish (\ref{lipest}), only the positivity of the kernel $z^{-2}$ was helpful. Hence, for $\epsilon, h, \delta >0$, and a smooth initial condition $F_0$, there exists a solution $F_{\epsilon, h, \delta}$ of (\ref{regteqg}) with initial condition $F_0$ which satisfies (\ref{lipest}) and (\ref{estdt}). Hence using Ascoli-Arzela theorem, passing to the limit $\epsilon, h, \delta \to 0$, $(F_{\epsilon, h, \delta})$ as some limit point $F$ which is a, locally in time, Lipschitz function. It is then a simple exercise that we do not detail here to verify that $F$ is indeed a non decreasing viscosity solution of (\ref{teqg}) with initial condition $F_0$.

\end{proof}
\begin{Rem}
Let us remark that in the Dyson case, the proof of the previous result is much simpler. Indeed, in this case, as the operator $\mathcal{L}$ commutes with translation, it does so with derivative and thus the proof of the formal a priori estimate is almost trivial and only requires $F_0$ to be Lipschitz continuous. Then, the formal justification of this a priori estimate and the existence of a viscosity solution can be justified by a vanishing viscosity argument, i.e. by adding a $-\epsilon \partial_{xx}$ in (\ref{teqg}) and by taking the limit $\epsilon \to 0$.
\end{Rem}

\subsection{General comparison principle and uniqueness of viscosity solutions}
We now show three results concerning general viscosity solutions of (\ref{teqg}) : propagation of monotonicity, comparison principle and uniqueness of solutions. Those three results could be proven in any order as they mostly rely on lemma \ref{complip} and proposition \ref{existlip} and not on one another.
\begin{Prop}
Let $F$ be a viscosity solution of (\ref{teqg+}) such that $F(0)$ is non-decreasing and bounded. Then $F(t)$ is increasing for all time.
\end{Prop}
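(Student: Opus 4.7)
The plan is to reduce to the smooth-initial-data case handled by Proposition \ref{existlip} by approximation, then transfer monotonicity through the comparison principle of Lemma \ref{complip}. Concretely, let $F_0 := F(0,\cdot)$, which is non-decreasing and bounded. First I would construct a sequence $(F_0^n)_{n\geq 1}$ of smooth, bounded, non-decreasing functions converging to $F_0$ uniformly; when $F_0$ has jumps, this is done by a standard two-step procedure (smooth out each jump on a small interval, then mollify).

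Next, Proposition \ref{existlip} supplies for every $n$ a viscosity solution $F^n$ of \eqref{teqg+} with $F^n(0,\cdot)=F_0^n$, such that $F^n(t,\cdot)$ is non-decreasing and Lipschitz locally uniformly in $t$. The key observation is that $\mathcal{L}$ annihilates additive constants and $\partial_x$ destroys them, so $F^n + c$ is a viscosity sub/supersolution of \eqref{teqg+} whenever $F^n$ is. Taking $c := \|F_0 - F_0^n\|_\infty$, we can apply Lemma \ref{complip} in both directions between $F$ (arbitrary viscosity solution) and $F^n \pm c$ (Lipschitz viscosity solutions with initial data sandwiching $F_0$), yielding
\[
\sup_{t,x}|F(t,x) - F^n(t,x)| \;\leq\; \|F_0 - F_0^n\|_\infty \xrightarrow[n\to\infty]{} 0.
\]

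Since each $F^n(t,\cdot)$ is non-decreasing in $x$ and the $F^n$ converge uniformly to $F$, the limit $F(t,\cdot)$ is non-decreasing for every $t\geq 0$, which is the desired conclusion.

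The main technical obstacle is the uniform smooth approximation step when $F_0$ is discontinuous; strictly speaking, a discontinuous function cannot be approximated uniformly by continuous ones. One workaround is to note that, being the $t=0$ trace of an usc viscosity solution, $F_0$ is right-continuous, so at the countably many jump points one first inserts a smooth monotone transition of arbitrarily small width before mollifying. An alternative, more robust route is to avoid uniform convergence altogether: build smooth non-decreasing $F_0^{n,\pm}$ with $F_0^{n,-}\leq F_0\leq F_0^{n,+}$ and $F_0^{n,\pm}\to F_0$ pointwise a.e., so that Lemma \ref{complip} yields $F^{n,-}\leq F\leq F^{n,+}$ with each envelope non-decreasing in $x$, and then monotonicity of $F(t,\cdot)$ follows by passing to pointwise half-relaxed limits. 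Either variant is standard once the comparison principle is available.
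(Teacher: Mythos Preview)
Your argument is correct and is essentially the paper's own proof: approximate $F(0)$ by smooth non-decreasing data, invoke Proposition~\ref{existlip} to obtain Lipschitz non-decreasing solutions, compare with $F$ via Lemma~\ref{complip} (using that constants are solutions), and let the approximation parameter go to zero. Your ``alternative, more robust route'' with two-sided approximants $F_0^{n,\pm}$ sandwiching $F_0$ is in fact exactly the construction the paper uses, with $F^{\pm}_{0,\epsilon}$ chosen so that $F(0)-\epsilon\le F^{-}_{0,\epsilon}\le F(0)\le F^{+}_{0,\epsilon}\le F(0)+\epsilon$; the paper then reads off $F(t)-\epsilon\le F^{-}_{\epsilon}(t)\le F(t)\le F^{+}_{\epsilon}(t)\le F(t)+\epsilon$ and concludes.
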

\begin{proof}
Since $F(0)$ is non-decreasing and bounded, for any $\epsilon > 0$, there exists $F^+_{0,\epsilon}$ and $F^-_{0,\epsilon}$ such that those two functions are non-decreasing, bounded and Lipschitz continuous and such that
\begin{equation}
F(0) - \epsilon \leq F^-_{0,\epsilon} \leq F(0) \leq F^+_{0,\epsilon} \leq F(0) + \epsilon.
\end{equation}
Denoting by $F^{\pm}_{\epsilon}$ the viscosity solution of (\ref{teqg}) with initial condition $F^{\pm}_{0,\epsilon}$ given by proposition \ref{existlip}, we deduce from the previous inequality and from the comparison principle that for all $t\geq 0$
\begin{equation}
F(t) - \epsilon \leq F^-_{\epsilon}(t) \leq F(t) \leq F^+_{\epsilon}(t) \leq F(t) + \epsilon.
\end{equation}
Since $F^-_{\epsilon}$ and $F^+_{\epsilon}$ are non-decreasing in space for all time, we deduce by letting $\epsilon \to 0$ the required result.
\end{proof}
\begin{Prop}\label{compg}
Assume that $F_1$ and $F_2$ are respectively viscosity subsolution and supersolution of (\ref{teqg+}) such that $F_1(0) \leq F_2(0)$ and that either $F_1(0)$ or $F_2(0)$ is bounded. Then for all $t\geq 0$, $F_1(t) \leq F_2(t)$.
\end{Prop}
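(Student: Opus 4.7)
The plan is to derive the general comparison principle from the Lipschitz one (Lemma \ref{complip}) by sandwiching $F_2$ between two Lipschitz viscosity solutions that differ only by an arbitrarily small constant. The crucial structural observation is that the operator $\mathcal{L}$ kills additive constants, since $\mathcal{L}[\phi]$ depends on $\phi$ only through differences $\phi(x)-\phi(x+z)$; hence \eqref{teqg+} is invariant under $F\mapsto F+c$ for every $c\in\mathbb{R}$.

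Assume without loss of generality that $F_2(0)$ is bounded (the case where $F_1(0)$ is bounded is symmetric). For each $\epsilon>0$ I choose a smooth, bounded, non-decreasing, Lipschitz continuous function $F_{0,\epsilon}$ with
\[
F_2(0)\leq F_{0,\epsilon}\leq F_2(0)+\epsilon,
\]
and let $G_\epsilon$ be the associated Lipschitz viscosity solution of \eqref{teqg+} given by Proposition \ref{existlip}. Two applications of Lemma \ref{complip} then close the argument. On the one hand, $F_1(0)\leq F_2(0)\leq G_\epsilon(0)$ with $G_\epsilon$ Lipschitz in space, so Lemma \ref{complip} applied to $F_1$ (subsolution) and $G_\epsilon$ (Lipschitz supersolution) yields $F_1(t)\leq G_\epsilon(t)$ for every $t\geq 0$. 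On the other hand, $G_\epsilon-\epsilon$ remains a Lipschitz viscosity solution of \eqref{teqg+} by constant-invariance, and $(G_\epsilon-\epsilon)(0)=F_{0,\epsilon}-\epsilon\leq F_2(0)$, so Lemma \ref{complip} applied to $G_\epsilon-\epsilon$ (Lipschitz subsolution) and $F_2$ (supersolution) yields $G_\epsilon(t)-\epsilon\leq F_2(t)$. Chaining the two inequalities gives $F_1(t)\leq F_2(t)+\epsilon$ for every $\epsilon>0$ and every $t\geq 0$, and sending $\epsilon\downarrow 0$ concludes.

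The only delicate step is the construction of the sandwiching Lipschitz initial datum $F_{0,\epsilon}$, exactly as in the proof of the preceding propagation-of-monotonicity proposition. Using that proposition I can first reduce to a non-decreasing $F_2(0)$. A bare mollification of $F_2(0)$ is not guaranteed to lie uniformly within $\epsilon$ of $F_2(0)$ if $F_2(0)$ has jumps exceeding $\epsilon$; but since $F_2(0)$ is bounded it can have at most finitely many such jumps, and these can be pre-smoothed by short linear ramps of width $\ll\epsilon$ before mollifying and shifting upward by $\epsilon/2$. This is the technical heart of the proposition; everything else is purely mechanical and relies solely on Lemma \ref{complip} together with the constant-invariance of $\mathcal{L}$.
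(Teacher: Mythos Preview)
Your argument is essentially the same as the paper's: insert a Lipschitz viscosity solution between $F_1$ and $F_2+\epsilon$, apply Lemma~\ref{complip} twice, use that $\mathcal{L}$ ignores additive constants, and let $\epsilon\to 0$. The paper places the interpolating initial datum directly in the gap $F_1(0)\leq F_{0,\epsilon}\leq F_2(0)+\epsilon$, whereas you sandwich $F_2(0)\leq F_{0,\epsilon}\leq F_2(0)+\epsilon$ and then shift by $-\epsilon$; both routes are equivalent.

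One small correction: your remark ``Using that proposition I can first reduce to a non-decreasing $F_2(0)$'' does not work as stated. The propagation-of-monotonicity proposition goes forward in time (non-decreasing at $t=0$ implies non-decreasing for $t>0$) and is stated for viscosity \emph{solutions}, not supersolutions; it cannot be invoked to manufacture monotonicity of $F_2(0)$. This does not damage your core argument, since the paper's version of the sandwich (between $F_1(0)$ and $F_2(0)+\epsilon$) gives more room and sidesteps the need for $F_2(0)$ itself to be monotone, but you should drop that particular justification.
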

\begin{proof}
Since $F_2(0) \geq F_1(0)$, for any $\epsilon > 0$, there exists a non-decreasing bounded Lipschitz function $F_{0,\epsilon}$ such that
\begin{equation}
F_1(0) \leq F_{0,\epsilon} \leq F_2(0) + \epsilon.
\end{equation}
Denoting by $F_{\epsilon}$ the Lipschitz solution of (\ref{teqg}) starting from $F_{0,\epsilon}$ given by proposition \ref{existlip}, lemma \ref{complip} implies that for all $ \epsilon >0$
\begin{equation}
\forall t\geq 0, F_1(t) \leq F_{\epsilon}(t) \leq F_2(t) + \epsilon.
\end{equation}
Passing to the limit $\epsilon \to 0$ yields the required result.
\end{proof}
\begin{Theorem}\label{uniqg}
Given a non-decreasing and bounded function $F_0$, there exists at most one viscosity solution of (\ref{teqg}).
\end{Theorem}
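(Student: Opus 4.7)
The plan is to combine the comparison principle (Proposition \ref{compg}) with the existence result for smooth initial data (Proposition \ref{existlip}) to sandwich any viscosity solution between two Lipschitz ones, then close the sandwich as the approximation parameter goes to zero via an $L^{1}$ contraction of the flow.

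Let $F_{1}, F_{2}$ be two viscosity solutions of \eqref{teqg} with $F_{1}(0) = F_{2}(0) = F_{0}$. For each $\epsilon > 0$, I would build smooth bounded non-decreasing Lipschitz approximations $F_{0,\epsilon}^{\pm}$ satisfying $F_{0,\epsilon}^{-} \leq F_{0} \leq F_{0,\epsilon}^{+}$ and $F_{0,\epsilon}^{\pm} \to F_{0}$ pointwise at continuity points of $F_{0}$; a convenient choice is the shifted mollifications $F_{0,\epsilon}^{\pm}(x) = (F_{0} \ast \rho_{\epsilon})(x \pm \epsilon)$, which moreover satisfy $F_{0,\epsilon}^{-}(x) \leq F_{0}(x^{-})$. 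Proposition \ref{existlip} furnishes Lipschitz viscosity solutions $F_{\epsilon}^{\pm}$ with these initial data. Since a Lipschitz viscosity solution is simultaneously an usc subsolution and an lsc supersolution, Proposition \ref{compg} applies in both directions: comparing $F_{i}$ (subsolution) against $F_{\epsilon}^{+}$ (supersolution) with $F_{i}(0) = F_{0} \leq F_{0,\epsilon}^{+}$ gives $F_{i} \leq F_{\epsilon}^{+}$, while comparing $F_{\epsilon}^{-}$ (subsolution) against $F_{i,*}$ (supersolution) gives $F_{\epsilon}^{-} \leq F_{i,*} \leq F_{i}$. The latter application requires the initial inequality $F_{0,\epsilon}^{-}(x) \leq F_{i,*}(0,x)$: this holds because the propagation of monotonicity established just above, together with the standard trace property $F_{i}(s, z) \to F_{0}(z)$ at continuity points $z$ of $F_{0}$, yields $F_{i,*}(0, x) \geq F_{0}(x^{-})$.

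Thus for every $\epsilon > 0$ the sandwich $F_{\epsilon}^{-}(t,x) \leq F_{1}(t,x),\ F_{2}(t,x) \leq F_{\epsilon}^{+}(t,x)$ holds on $[0,\infty) \times \mathbb{R}$. The main obstacle is to close the sandwich as $\epsilon \to 0$. At $t=0$ the difference $F_{0,\epsilon}^{+} - F_{0,\epsilon}^{-}$ vanishes Lebesgue-a.e., since the jumps of a bounded non-decreasing function form a set of measure zero. To propagate this to $t > 0$, I would invoke an $L^{1}$ contraction estimate of Crandall--Tartar type: the Lipschitz flow is order-preserving (by Proposition \ref{compg}) and preserves the total mass $F(t, +\infty) - F(t, -\infty)$, since differentiating \eqref{teqg} in $x$ yields the conservative form $\partial_{t}(\partial_{x} F) + \partial_{x}(\partial_{x} F \cdot \mathcal{L}[F]) = 0$. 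Consequently $\| F_{\epsilon}^{+}(t) - F_{\epsilon}^{-}(t) \|_{L^{1}} \leq \| F_{0,\epsilon}^{+} - F_{0,\epsilon}^{-} \|_{L^{1}} \to 0$, which forces $F_{1} = F_{2}$ Lebesgue-a.e.; upper semi-continuity together with monotonicity in $x$ then upgrades this to equality everywhere.
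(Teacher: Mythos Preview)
Your argument takes an unnecessary detour and contains a genuine gap at the closing step.

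The paper's proof is immediate from Proposition~\ref{compg}: that comparison principle already handles \emph{arbitrary} (not necessarily Lipschitz) sub- and supersolutions, because the sandwiching by a Lipschitz solution is built into its proof. Applying it twice with the roles of $F_{1}$ and $F_{2}$ exchanged yields $F_{1}\le F_{2}$ and $F_{2}\le F_{1}$. There is no need for you to re-construct a two-sided Lipschitz sandwich and then ``close'' it.

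The closing step you propose is where the problem lies. You invoke a Crandall--Tartar $L^{1}$ contraction based on order preservation and on ``conservation of total mass $F(t,+\infty)-F(t,-\infty)$''. But Crandall--Tartar requires the flow to preserve $\int F\,dx$, not $\int \partial_{x}F\,dx$; what your conservative form $\partial_{t}(\partial_{x}F)+\partial_{x}(\partial_{x}F\cdot\mathcal{L}[F])=0$ yields is the latter. The inequality $\|F_{\epsilon}^{+}(t)-F_{\epsilon}^{-}(t)\|_{L^{1}}\le\|F_{0,\epsilon}^{+}-F_{0,\epsilon}^{-}\|_{L^{1}}$ is precisely the $W_{1}$ contraction for the measures $\partial_{x}F_{\epsilon}^{\pm}$, and in this paper that is established \emph{only} in the Dyson case (Section~2.5), where it rests on the antisymmetry of the interaction (equivalently, center-of-mass preservation $\sum_{i}\lambda_{i}(t)=\sum_{i}\lambda_{i}(0)$). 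For the general kernel $g(x,z)$ of Section~4 no such symmetry is assumed, $\int \partial_{x}F\,\mathcal{L}[F]\,dx$ need not vanish, and the $L^{1}/W_{1}$ contraction you use is simply not available. Thus your sandwich does not close for general $\mathcal{L}$, and the proof does not go through as written.
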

This result is an immediate consequence of the two previous results hence we do not detail its proof. As we did in the Dyson case, we mention that the addition of a potential (or external force) does not perturb the study we just made.
\begin{Cor}\label{potentialg}
Given a probability measure $µ_0$ on $\mathbb{R}$ and a continuous real valued function $B$ which satisfies
\begin{equation}
\exists c_0 > 0, \forall x,y \in \mathbb{R}, B(x) - B(y) \geq -c_0(x-y),
\end{equation}
there exists at most one viscosity solution $F$ of 
\begin{equation}
\partial_t F + B(x)\partial_x F+ (\partial_x F) \mathcal{L}[F] = 0 \text{ in } (0,\infty)\times \mathbb{R},
\end{equation}
which satisfies $F(0,x) = µ_0((-\infty,x])$.
\end{Cor}
We do not prove the existence of a viscosity solution in this general case. Mainly we want to avoid adding another technical proof in this paper. Existence of solution should be rather standard, at least in the case in which $\mathcal{L}$ derives from an interacting particles model, by adapting the proof we provide in the Dyson case, which, as we already insisted on, does not rely strongly on the nature of the interactions between particles but mostly on the comparison principle.

\subsection{Extensions to operators without maximum principle}
In this section, we want to study the case in which the condition (\ref{hypg}) is replaced by the weaker 
\begin{equation}
\forall x \in \mathbb{R}, g(x,0) \geq 0.
\end{equation}
Mainly we show that even though the comparison principle does not hold anymore, uniqueness of viscosity solutions of (\ref{teqg}) can still be obtained. We place ourselves in the case in which $g$ satisfies (\ref{derg}), thus we shall assume that the operator $\mathcal{L}$ is formed of two part
\begin{equation}\label{newl}
\mathcal{L} = \mathcal{L}_1 + \mathcal{L}_2,
\end{equation}
where the operators $\mathcal{L}_i$ are defined by
\begin{equation}
\mathcal{L}_1[\phi](x) = \int_{\mathbb{R}}\frac{g_1(x,z)(\phi(x) - \phi(x + z))}{z^2}dz,
\end{equation}
\begin{equation}
\mathcal{L}_2[\phi](x) = \int_{\mathbb{R}}g_2(x,z)(\phi(x) - \phi(x + z))dz,
\end{equation}
where $g_1$ is a positive function satisfying (\ref{req1})-(\ref{req3}) and $g_2$ is a smooth bounded and integrable function, which is not assumed to have a sign. However, we assume that for some $C> 0$
\begin{equation}
\forall x,y \in \mathbb{R}, \left| \int_{\mathbb{R}}g_2(x,z) - g_2(y,z) dz \right|\leq C |x-y|
\end{equation}
Obviously we do not expect a comparison principle to hold in such a situation, however, the following can still be establish.
\begin{Prop}
Let $F_1$ and $F_2$ be two viscosity solution of (\ref{teqg}) (with $\mathcal{L}$ given by (\ref{newl})). Assume that
\begin{equation}
\forall t \geq 0, \sup_{0 \leq s \leq t} \|D_x F_2(s)\|_{\infty} < \infty.
\end{equation}
Then the following holds.
\begin{equation}
\|F_1(t) - F_2(t)\|_{\infty} \leq e^{C(t)t}\|F_1(0) - F_2(0)\|_{\infty},
\end{equation}
where $C(t)$ is a function depending only on $\sup_{0 \leq s \leq t} \|D_x F_2(s)\|_{\infty}$ and $g_2$.
\end{Prop}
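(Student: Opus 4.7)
The plan is to promote the formal Gronwall computation---at a spatial maximum of $v:=F_1-F_2$, one has $\partial_t v = -p(\mathcal{L}[F_1]-\mathcal{L}[F_2])$ where $p = \partial_x F_1 = \partial_x F_2$; the $\mathcal{L}_1$ contribution is non-positive by sign-definiteness of $g_1$, and the $\mathcal{L}_2$ contribution is controlled by $2\|g_2\|_{L^1}\|v\|_\infty$---to the viscosity setting via doubling of variables with an additive exponential-in-time penalty. Set $L_t := \sup_{0\leq s\leq t}\|D_xF_2(s)\|_\infty$, $\alpha_0 := \|F_1(0)-F_2(0)\|_\infty$, fix $T>0$ and choose $K=K(L_T, g_2)$ strictly larger than the Gronwall rate. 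Arguing by contradiction, assume $\sup_{t\leq T}\|v(t)\|_\infty > \alpha_0 e^{KT}$ and introduce
\[
\Theta_\epsilon(t,s,x,y) := F_1(t,x) - F_2(s,y) - \frac{(x-y)^2 + (t-s)^2}{2\epsilon} - \alpha_0 e^{Kt}.
\]
For $\epsilon$ small, $\sup\Theta_\epsilon > 0$ is attained at some $(t^*,s^*,x^*,y^*)$ with $t^* > 0$ (the case $t^*=0$ is ruled out by $F_1(0) \leq F_2(0) + \alpha_0$). The standard a priori estimates give $|t^*-s^*|, |x^*-y^*| = o(1)$ and the crucial bound $(x^*-y^*)/\epsilon = \partial_y F_2(s^*, y^*) \leq L_T$.

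A key additional observation is the diagonal comparison $\Theta_\epsilon(t^*,s^*,x^*+z,y^*+z) \leq \Theta_\epsilon(t^*,s^*,x^*,y^*)$, from which
\[
F_1(t^*, x^*+z) - F_1(t^*, x^*) \leq F_2(s^*, y^*+z) - F_2(s^*, y^*) \leq L_T|z|,
\]
so $F_1(t^*, \cdot)$ inherits the Lipschitz constant $L_T$ of $F_2$ at the maximum point. Next I would write the viscosity sub/super inequalities at $(t^*,x^*)$ and $(s^*,y^*)$ with the close/far splitting at scale $\delta = \epsilon^2$ and subtract; the exponential penalty contributes $+\alpha_0 K e^{Kt^*}$ on the left-hand side. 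The $\mathcal{L}_1$ part is treated verbatim as in Lemma \ref{complip} (positivity of $g_1$, the Lipschitz bound (\ref{req1}), choice of $\delta$), giving an $o(1)$ term as $\epsilon\to 0$.

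The heart of the argument is the estimate for the $\mathcal{L}_2$ contribution, whose kernel carries no singularity. Writing $v = F_1 - F_2$ and decomposing
\[
\mathcal{L}_2[F_1(t^*)](x^*) - \mathcal{L}_2[F_2(s^*)](y^*) = \mathcal{L}_2[v(t^*)](x^*) + \bigl(\mathcal{L}_2[F_2](t^*, x^*) - \mathcal{L}_2[F_2](s^*, y^*)\bigr),
\]
the first, Gronwall-type term is bounded in modulus by $2\|g_2\|_{L^1}\|v(t^*)\|_\infty$ by integrability of $g_2$. The second, regularity term is controlled by $o(1)$, using the Lipschitz bound $L_T$ on $F_2$ in space, the Lipschitz hypothesis on $\int (g_2(x,z)-g_2(y,z))dz$, and the time continuity of $F_2$ inherited from its equation. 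Multiplying by the bounded factor $(x^*-y^*)/\epsilon \leq L_T$ and letting $\epsilon\to 0$, the subtracted viscosity inequality reduces to
\[
\alpha_0 K e^{Kt^*} \leq 2 L_T \|g_2\|_{L^1} \|v(t^*)\|_\infty.
\]

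To close the loop, I would exploit the max property of $\Theta_\epsilon$: the comparison $\Theta_\epsilon(t^*, t^*, x, x) \leq \Theta_\epsilon(t^*, s^*, x^*, y^*)$ combined with $F_2(t^*, x^*) - F_2(s^*, y^*) = o(1)$ gives $\sup_x v(t^*, x) \leq v(t^*, x^*) + o(1) \leq \alpha_0 e^{Kt^*} + o(1)$; a parallel doubling for $F_2 - F_1$ yields the analogous estimate for $-\inf_x v(t^*, x)$. Thus in the limit $\|v(t^*)\|_\infty \leq \alpha_0 e^{Kt^*}+o(1)$, and choosing $K > 2L_T\|g_2\|_{L^1}$ gives the desired contradiction, producing the claim with $C(t) = K = C(L_t, g_2)$. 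The main obstacle is precisely this $\mathcal{L}_2$ estimate: without sign-definiteness of $g_2$, the naive bound $|\mathcal{L}_2| \leq 2\|F_1\|_\infty \|g_2\|_{L^1}$ leads only to an additive (not multiplicative) stability estimate; the local Lipschitz control on $F_1$ inherited from the diagonal comparison at the doubling maximum is precisely what allows the clean $\|v\|_\infty$-bound needed for Gronwall's lemma to close.
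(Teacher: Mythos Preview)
Your approach is close to the paper's in spirit---doubling of variables, handling $\mathcal{L}_1$ via the positivity of $g_1$ exactly as in Lemma~\ref{complip}, and isolating the $\mathcal{L}_2$ term as the Gronwall source---but the ``close the loop'' step is circular. You write
\[
\sup_x v(t^*,x)\le v(t^*,x^*)+o(1)\le \alpha_0 e^{Kt^*}+o(1).
\]
The first inequality is correct, but the second, $v(t^*,x^*)\le \alpha_0 e^{Kt^*}$, is precisely the estimate you are trying to establish; in fact your contradiction hypothesis $\sup\Theta_\epsilon>0$ forces the \emph{opposite} inequality $v(t^*,x^*)\ge \alpha_0 e^{Kt^*}-o(1)$. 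The ``parallel doubling for $F_2-F_1$'' does not rescue this either: it would produce a bound on $-\inf_x v$ at a \emph{different} time $\tilde t^*\ne t^*$, and that argument would suffer from the same circularity. In short, you cannot bound the right-hand side $\|v(t^*)\|_\infty$ of your key inequality by $\alpha_0 e^{Kt^*}$ without assuming the conclusion.

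The paper avoids this by not attempting a one-shot contradiction with an exponential penalty. It introduces the running suprema $M_+(t)=\sup_{s\le t,\,x}(F_1-F_2)(s,x)$ and $M(t)=\sup_{s\le t}\|F_1(s)-F_2(s)\|_\infty$, carries out the doubling as in Lemma~\ref{complip}, and bounds the negative part of the new $\mathcal{L}_2$ contribution by a constant times $M(t)+O(\epsilon)$ (using the diagonal comparison you also observed, together with the lower bound $F_1(s_1^*,x^*+z)-F_2(s_2^*,y^*+z)\ge -M(t)-O(\epsilon)$). Passing to the limit $\epsilon\to 0$ yields the viscosity differential inequality $\tfrac{d}{dt}M_+(t)\le C(t)M(t)$; the same holds for $M_-$ by the symmetric argument (always factoring so that the Lipschitz function $F_2$ carries the $(g(x^*,z)-g(y^*,z))$ term), hence for $M=\max(M_+,M_-)$, and Gronwall gives the conclusion. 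There is no circularity because $M(t)$ is a well-defined quantity available at each step, not the target of the estimate.
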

\begin{proof}
Let us denote 
\begin{equation}
\forall t \geq 0, C(t) :=\sup_{0 \leq s \leq t} \|D_x F_2(t)\|_{\infty} < \infty.
\end{equation}
Let us define $M_+(t),M(t)$ and $M_{+,\epsilon}(t)$ by
\begin{equation}
M_+(t) := \sup \{F_1^*(s,x) - F_2(s,x)| x \in \mathbb{R},0\leq s \leq t \},
\end{equation}
\begin{equation}
M(t) := \sup \{\|F_1(s)-F_2(s)\|_{\infty}|0\leq s \leq t \},
\end{equation}
\begin{equation}\label{aidef}
M_{+,\epsilon}(t) := \sup \left\{F_1^*(s_1,x) - F_2(s_2,y) - \frac{1}{\epsilon}(x-y)^2 - \frac{1}{\epsilon}(s_1-s_2)^2| x,y \in \mathbb{R},0\leq s_1,s_2 \leq t \right\}.
\end{equation}
The same computations as in the proof of proposition \ref{complip} can be carried on except for the presence of the term
\begin{equation}
\begin{aligned}
\frac{x^* - y^*}{\epsilon}\int_{\mathbb{R}}&g_2(x^*,z)(F_1(s_1^*,x^*) - F_1(s_1^*,x^* + z) - F_2(s_2^*,y^*) + F_2(s_2^*,y^* + z))dz +\\
&+ \frac{x^* - y^*}{\epsilon}\int_{\mathbb{R}}(g_2(x^*,z) - g_2(y^*,z))(F_2(s_2^*,y^*) - F_2(s_2^*,y^* +z))dz,
\end{aligned}
\end{equation}
where $(x^*,y^*,s_1^*,s_2^*)$ is a point of maximum in (\ref{aidef}). The negative part of this term can be bounded by
\begin{equation}
2C(t) \left(C\|F_2\|_{\infty}|x^*-y^*|+ (M(t) + O(\epsilon))\int_{\mathbb{R}}(g_2)_-  \right).
\end{equation}
Taking the limit $\epsilon \to 0$ we deduce that $M_+$ is a (viscosity) solution of 
\begin{equation}
\frac{d}{dt}M_+(t) \leq 2C(t)\left(\int_{\mathbb{R}}(g_2)_-\right) M(t).
\end{equation}
By symmetry we obtain that $M$ is a solution of 
\begin{equation}
\frac{d}{dt}M(t) \leq 2C(t)\left(\int_{\mathbb{R}}(g_2)_-\right) M(t),
\end{equation}
from which the required result easily follows.
\end{proof}
As a consequence of this result, we can state.
\begin{Theorem}
Under the standing assumptions on $g_1$ and $g_2$, for any smooth ($\mathcal{C}^{1,\alpha}$) initial data $F_0$, there exists a unique Lipschitz viscosity solution of (\ref{teqg}).
\end{Theorem}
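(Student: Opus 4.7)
The plan is to split the statement into uniqueness and existence. Uniqueness is essentially immediate from the previous proposition: if $F_1$ and $F_2$ are two Lipschitz viscosity solutions sharing the initial datum $F_0$, then (say) $F_2$ satisfies $\sup_{0 \leq s \leq t}\|\partial_x F_2(s)\|_\infty < \infty$, so the stability estimate of the previous proposition gives
\begin{equation*}
\|F_1(t) - F_2(t)\|_\infty \leq e^{C(t)t}\|F_1(0) - F_2(0)\|_\infty = 0
\end{equation*}
for every $t \geq 0$, hence $F_1 \equiv F_2$.

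For existence, I would mimic the strategy of Proposition \ref{existlip}: construct smooth approximating solutions via a semi-discretization and regularization of the nonlocal operator (replacing $z^{-2}$ by a smooth kernel, the positive part by $\psi_{+,\epsilon}$, and the spatial derivative by a finite difference), derive a Lipschitz a priori bound uniform in the regularization parameters, and pass to the limit via Ascoli--Arzel\`a. The contribution of $\mathcal{L}_1$ to the evolution of $\|\partial_x F(t)\|_\infty$ is controlled exactly as in Proposition \ref{existlip} by the logarithmic Gr\"onwall argument, since this part of the proof only uses the positivity of $g_1$ and the bounds (\ref{req1})--(\ref{req3}). For the $\mathcal{L}_2$ contribution, differentiating $\mathcal{L}_2[F]$ with respect to $x$ produces an integral against $\partial_x g_2(x,z)$ plus $\mathcal{L}_2[\partial_x F]$; the first is bounded by $C\|F\|_\infty$ (using the assumed Lipschitz bound on $x \mapsto \int g_2(x,z)\,dz$ together with $g_2 \in L^1$), and the second by $2\|g_2\|_{L^1}\|\partial_x F\|_\infty$. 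Hence $\mathcal{L}_2$ contributes an \emph{at most linear} term in $\|\partial_x F(t)\|_\infty$ to the differential inequality, which is absorbed by the logarithmic Gr\"onwall estimate of Proposition \ref{existlip}, yielding the desired Lipschitz bound locally in time.

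The main obstacle, as already noted, is precisely this a priori Lipschitz estimate. Without the sign condition on $g_2$, one can no longer invoke a maximum principle on the equation satisfied by $\rho = \partial_x F$, and must instead exploit the integrability of $g_2$ to treat $\mathcal{L}_2$ as a bounded linear perturbation of $\mathcal{L}_1$; in particular the $\mathcal{C}^{1,\alpha}$ regularity of $F_0$ is used only to initialize the Gr\"onwall bound. Once this estimate is secured, the remaining step (relative compactness of the regularized family and verification that any limit is a viscosity solution of \eqref{teqg}) follows verbatim the end of the proof of Proposition \ref{existlip}, and uniqueness then upgrades this to convergence of the full family.
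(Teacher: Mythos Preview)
Your proposal is correct and follows exactly the same route as the paper: uniqueness is read off directly from the preceding stability estimate, and existence is obtained by adapting the a priori Lipschitz bound and approximation scheme of Proposition~\ref{existlip}, with the $\mathcal{L}_2$ part treated as a bounded (linear-in-$\|\partial_x F\|_\infty$) perturbation thanks to the integrability of $g_2$. The paper's own proof is in fact even terser than yours---it simply declares existence ``a mere adaptation of proposition~\ref{existlip} that we do not detail here''---so your sketch of how the $\mathcal{L}_2$ term enters the Gr\"onwall argument is already more explicit than what the paper provides.
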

\begin{proof}
The uniqueness of such a solution immediately follows from the previous result while the existence part is a mere adaptation of proposition \ref{existlip} that we do not detail here.
\end{proof}
Finally, let us end this paper on a remark concerning other types of operators that could be of interest. As we mention in Corollary \ref{potentialg}, the addition of a potential $B$ does not raise any major difficulty. However situations in which this potential depends on the whole spectral measure could be of interest for applications and do not fall clearly in situations we already looked at in this paper. Consider for example the equation 
\begin{equation}
\partial_t F + B(x;\partial_x F)\partial_x F+ (\partial_x F) \mathcal{L}[F] = 0 \text{ in } (0,\infty)\times \mathbb{R},
\end{equation}
where $B(x; µ)$ is given by 
\begin{equation}
B(x;µ) = -\frac{x}{\epsilon}\left(\int_{\mathbb{R}}y^2µ(dy) - K\right)_+^2,
\end{equation}
with $\epsilon, K > 0$. Such a potential could model a confinement force acting on the eigenvalues once the energy of the system is too large, and thus, could prove to be of interest for various applications.

\section*{Acknowledgments}
The first, third and fourth authors have been partially supported by the Chair FDD (Institut Louis Bachelier). The fourth author has been partially supported by the Air Force Office for Scientific Research grant FA9550-18-1-0494 and the Office for Naval Research grant N000141712095.
\bibliographystyle{plainnat}
\bibliography{bibmatrix}

\end{document}